\documentclass[11pt]{article}
\usepackage{amsthm, amsmath, amssymb,mathrsfs, enumerate, fullpage, mathtools, extpfeil}
\usepackage{colonequals}
\usepackage{microtype}
\setcounter{tocdepth}{3} 
\usepackage{tocloft}
\setlength{\cftbeforesecskip}{7pt} 
\allowdisplaybreaks 
\mathtoolsset{showonlyrefs=true} 

\usepackage{xcolor}
\usepackage{comment}
\definecolor{darkerGreen}{RGB}{0,195,0} 
\definecolor{darkerRed}{RGB}{240,0,0}
\usepackage[colorlinks=true, linkcolor=darkerRed, citecolor=darkerGreen]{hyperref}
\numberwithin{equation}{section}

\theoremstyle{plain}
\newtheorem{thm}{Theorem}[section]
\newtheorem{theorem}[thm]{Theorem}
\newtheorem{corollary}[thm]{Corollary}
\newtheorem{lemma}[thm]{Lemma}
\newtheorem{proposition}[thm]{Proposition}

\theoremstyle{remark} 
\newtheorem{remark}[thm]{Remark}
\newtheorem{example}[thm]{Example}

\theoremstyle{definition}
\newtheorem{definition}[thm]{Definition}
\newtheorem{assumption}[thm]{Assumption}

\newcommand{\BR}{{\mathbb {R}}}
\newcommand{\BZ}{{\mathbb {Z}}}
\newcommand{\CB}{{\mathcal {B}}}

\newcommand{\SF}{\mathscr{F}}
\newcommand{\SG}{\mathscr{G}}
\newcommand{\dd}{{\,\mathrm{d}}}

\begin{document}

\title{Barron Space Representations for Elliptic PDEs with Homogeneous Boundary Conditions}
\author{Ziang Chen and Liqiang Huang}
\date{}
\maketitle
\begin{abstract}
We study the complexity of approximating high-dimensional second-order elliptic PDEs with homogeneous boundary conditions on the unit hypercube using Barron spaces. Under suitable Barron assumptions on the coefficients and forcing term, we prove that the solutions can be approximated to any prescribed accuracy \(\varepsilon>0\) by two-layer neural networks whose widths and relevant parameters are bounded by \(\mathcal{O}\bigl(d^{C|\log\varepsilon|}\bigr)\). Consequently, we identify a class of elliptic PDEs that can be approximated by shallow neural networks without suffering from the curse of dimensionality.
\vskip0.3cm {\noindent{\bf Keywords.}  Barron space, elliptic PDEs, neural networks}
\end{abstract}
\tableofcontents

\section{Introduction}
High-dimensional partial differential equations (PDEs) arise in a wide range of applications, 
including physics, finance, and control. 
Classical numerical schemes, such as finite difference and finite element methods, 
become computationally intractable as the spatial dimension \(d\) increases, 
due to the so-called \emph{curse of dimensionality (CoD)}: 
to achieve an accuracy of \(\varepsilon\), 
the computational cost scales as \(\mathcal{O}(\varepsilon^{-d})\), exponentially in $d$.

To overcome this barrier, a variety of deep learning-based approaches have been developed in recent years~\cite{EHJ17,HJE18,Sir18,EYu18,HZE19,RPK19,HS20,PSMF20,ZhouHL21,NakaTGK21,EJA22,LL24,KT25}. As an example, the \emph{Deep Ritz method (DRM)} proposed by E and Yu~\cite{EYu18} is a powerful approach for solving PDEs that admit a variational formulation. In this method, the PDE solution is represented by a deep neural network, and the network parameters are determined by minimizing the associated energy functional, with the integrals evaluated via Monte Carlo sampling. Since only first-order derivatives of the network output are required, this approach is particularly efficient for PDEs involving higher-order derivatives. Additionally, the \emph{Physics-Informed Neural Network (PINN)} developed by Raissi et al.~\cite{RPK19} embeds the PDE and boundary conditions into the loss function by penalizing violations at selected collocation points. While this framework applies to a broader class of PDEs, it typically requires computing higher-order derivatives of the network output, which can reduce efficiency for PDEs with high-order terms.

Despite differences in formulation, both DRM and PINNs share the goal of learning PDE solutions with neural networks. Their practical performance hinges on several theoretical factors: the representational capacity of the network architecture, the design and convergence of the training algorithm, and the generalization ability of the trained model to unseen inputs~\cite{SDK20,LuLuWang21,LuLu22,DTM24,JSC26}. These considerations motivate three fundamental theoretical questions for deep-learning–based PDE methods:
\begin{itemize}
\item \emph{Approximation}: What is the minimal architectural complexity (e.g., depth, width) required to approximate a target function to a prescribed accuracy?
\item \emph{Optimization}: For a given architecture, how can we design training algorithms with provable convergence to an optimal or near-optimal solution?
\item \emph{Generalization}: Under what conditions does the trained network generalize effectively, achieving low error on unseen data?
\end{itemize}

In this work, we focus on the first question. 
A prominent framework for addressing this problem is based on \emph{Barron functions}. 
In the seminal work~\cite{Barron93}, Barron showed that a real-valued function \(f\colon\mathbb{R}^d\to\mathbb{R}\) can be approximated in the \(L^2\)-norm by a two-layer neural network, provided that its Fourier transform \(\widehat{f}\) satisfies
\begin{equation}
\label{oB}
\int_{\mathbb{R}^d}|\xi|\cdot |\widehat{f}(\xi)|\,\dd\xi<\infty.
\end{equation}
This condition is now commonly referred to as finiteness of the \emph{Barron norm}. The approximating network takes the form
\begin{equation}
\label{eq:2NN}
\frac{1}{k}\sum_{i=1}^k
a_i\sigma\bigl(w_i^\top x+b_i\bigr).
\end{equation}
If the Barron norm is uniformly bounded, the resulting approximation error bound is independent of the input dimension \(d\). This result provides a rigorous theoretical foundation for the ability of neural networks 
to circumvent the CoD in high-dimensional function approximation; that is, the required number of neurons does not grow exponentially with the input dimension. Following~\cite{Barron93}, there have been multiple variants and generalizations of Barron functions with dimension-independent approximation results in a similar spirit, see e.g.~\cite{Jason18, E, EWoj22, SIEGELXu21, SX23, SW24, CLTakase26}. 

This observation suggests a new paradigm for studying high-dimensional PDEs:
\begin{quote}
\emph{If the PDE data ensure that the solution admits an accurate Barron approximation without suffering from the CoD, then the solution can be efficiently approximated in high dimensions by a two-layer neural network.}
\end{quote}
This perspective was first realized by Chen et al.~\cite{Chen21}, who developed a Barron function framework for approximating second-order elliptic PDEs on \(\mathbb{R}^d\). Motivated by~\cite{MLR21}, their iterative schemes converge exponentially while controlling the Barron norm. Marwah et al.~\cite{Mar23} extended this approach to Dirichlet problems on the unit hypercube under additional regularity assumptions on the coefficients; see the discussion following Theorem~\ref{mainthm} in Section~\ref{subsecmain}. By contrast, general variable-coefficient Neumann problems on bounded domains remain unexplored.

\subsection{Contributions}
\label{ourcontri}
We study the approximation of solutions to second-order elliptic PDEs on the \(d\)-dimensional unit hypercube \(\Omega\coloneqq (0,1)^d\subseteq\mathbb{R}^d\), where \(d\geq 2\). Specifically, we consider the following two prototypical elliptic problems with homogeneous boundary conditions:
\begin{itemize}
    \item \textbf{The Dirichlet Problem:}
    \begin{equation}
    \label{Diri}
    -\nabla \cdot  (A(x) \nabla u)  + c(x) u = f(x) \text{ in } \Omega,\quad u = 0 \text{ on } \partial \Omega,
    \end{equation}

    \item \textbf{The Neumann Problem:}
    \begin{equation}
    \label{Newu}
    -\nabla \cdot  (A(x) \nabla u)  + c(x) u = f(x) \text{ in } \Omega,\quad A(x) \nabla u \cdot \nu = 0 \text{ on } \partial \Omega,
    \end{equation}
\end{itemize}
where \( A(x) \) is a real matrix-valued function, and \( \nu \) denotes the outward unit normal vector on \( \partial \Omega \). Our main results are stated informally below; precise formulations are given in Theorems~\ref{mainthm} and~\ref{mainthm2}.

\begin{theorem}[Main Results, Informal Version]
Suppose that the coefficients \(A(x)\), \(c(x)\) and the forcing term \(f(x)\) in problems~\eqref{Diri} and~\eqref{Newu} belong to suitable Barron spaces. Then for every \(0<\varepsilon<1/2\), the weak solution \(u^*\) of either problem can be approximated with error at most \(\varepsilon\) in the \(H^1(\Omega)\)-norm by a two-layer neural network of the form
\[
\frac{1}{k}\sum_{i=1}^k a_i\cos\bigl(w_i^\top x+b_i\bigr)
\quad\text{or}\quad
\gamma+\sum_{i=1}^k a_i\operatorname{ReLU}\bigl(w_i^\top x+b_i\bigr).
\]
Moreover, the network width satisfies \(k=\mathcal{O}\bigl(d^{C_1|\log\varepsilon|}\bigr)\), and its parameters are bounded by \(\mathcal{O}\bigl(d^{C_2|\log\varepsilon|}\bigr)\). The positive constants \(C_1\) and \(C_2\) depend only on the Barron norms of the problem data.
\end{theorem}

We briefly outline the strategy below, with full details and precise notation provided in the subsequent section.
\begin{enumerate}[Step 1.]
	\item For each of the problems~\eqref{Diri} and~\eqref{Newu}, we construct a Sobolev gradient descent scheme based on its variational formulation:
\[
u_{t+1}=u_t-\alpha\mathcal{G}(u_t),
\]
where \(\mathcal{G}\) is the Sobolev gradient of the associated energy functional and \(\alpha>0\) is the step size. We show that the scheme converges exponentially to the solution \(u^*\) in the \(H^1(\Omega)\)-norm:
\[
\|u_t-u^*\|_{H^1(\Omega)}
\lesssim \beta^t
\quad\text{for all }t\geq0,
\]
where \(0<\beta<1\) depends only on the PDE coefficients; see~\eqref{setp1D} and~\eqref{setp1N}. The iterative schemes proposed in~\cite{Chen21,Mar23} can be viewed as special cases of this Sobolev gradient descent framework.
    
	\item We derive a recursive estimate for the Barron norm of \(u_{t+1}\) in terms of that of \(u_t\), with a one-step growth factor of order \(\mathcal{O}(d^2)\); see Theorem~\ref{thm:baroonnormut+1}. Consequently, every iterate \(u_t\) remains in the corresponding Barron space.
	\item For cosine networks, the Barron approximation theory developed in~\cite{E} shows that every Barron function \(g\) admits a two-layer cosine network satisfying the following estimate; see Theorem~\ref{thm:nueraltobarron}:
\[
\left\|
\frac{1}{k}\sum_{i=1}^k a_i\cos\bigl(w_i^\top x+b_i\bigr)-g
\right\|_{H^1(\Omega)}
\leq
\frac{\|g\|_{\mathcal{B}(\Omega)}}{\sqrt{k}},
\]
where \(\|g\|_{\mathcal{B}(\Omega)}\) denotes the Barron norm of \(g\). For networks with \(\operatorname{ReLU}\) activation, we extend the strategy of~\cite{LuLuWang21} and obtain the following estimate; see Theorem~\ref{thmllw21}:
\[
\left\|
\gamma+\sum_{i=1}^k a_i\operatorname{ReLU}\bigl(w_i^\top x+b_i\bigr)-g
\right\|_{H^1(\Omega)}
\lesssim
\frac{\|g\|_{\CB(\Omega)}}{\sqrt{k}}.
\]
\end{enumerate}
We give an example illustrating how Barron functions help overcome the curse of dimensionality.

\begin{example}
Let $\mathcal{P}_D$
denote the class of elliptic Dirichlet problems of the form~\eqref{Diri} satisfying the following conditions:
\begin{enumerate}[(1)]
    \item The spatial dimension \(d\geq2\) is arbitrary.
    
    \item The eigenvalues of $A(x)$ and the values of $c(x)$ lie in the interval $[1,1.1]$.
    
    \item The relevant Barron norms of $A$, $c$, and $f$ are bounded by $1.2$.
    
    \item The source term satisfies
    \(\|f\|_{H^{-1}(\Omega)}\leq 0.1\).
\end{enumerate}
The precise definitions of these quantities are given in Section~\ref{sec:2}.
Our main results imply that, for every problem in $\mathcal{P}_D$ and every
$0<\varepsilon<0.5$, there exists a two-layer cosine neural network
\[
    N_k(x)=\frac{1}{k}\sum_{i=1}^k
    a_i\cos\bigl(w_i^\top x+b_i\bigr)
\]
such that
\[
    \|N_k-u_D^*\|_{H^1(\Omega)}\leq\varepsilon.
\]
Moreover, the network width satisfies
\[
    k\leq 2d^{11|\log\varepsilon|},
\]
while its parameters satisfy
\[
    |a_i|\leq 2d^{11|\log\varepsilon|},\quad
    w_i\in\pi\mathbb{Z}^d, \quad
    \frac{1}{k}\sum_{i=1}^k |a_i|\bigl(1+|w_i|^2\bigr)
    \leq 2d^{11|\log\varepsilon|}.
\]
Thus for every fixed \(\varepsilon\), both the network width and the weighted
parameter bound grow at most polynomially in \(d\). Hence this class of
variable-coefficient elliptic Dirichlet problems can be approximated by
shallow neural networks without exponential dependence on the dimension.
Analogous polynomial-in-\(d\) bounds hold for two-layer
\(\operatorname{ReLU}\) networks.
\end{example}

\subsection{Related Work}
Beyond approximating PDE solutions by Barron functions, the Barron framework raises another natural question:
\begin{quote}
\emph{If the PDE coefficients are sufficiently regular in the Barron sense, does the solution also belong to the Barron class?}
\end{quote}
This viewpoint is often referred to as the regularity problem. It was first explored in~\cite{LuLuWang21,LuLu22,EWreg22}, where the authors investigated the Poisson equation and the Schr\"{o}dinger equation on the unit hypercube with Neumann boundary conditions. Subsequently, Chen et al.~\cite{CLLZ23} implemented this perspective for the stationary Schr\"{o}dinger equation on \( \mathbb{R}^d \). Regularity results for the Hamilton--Jacobi--Bellman equation in the whole space were established in~\cite{Feng25}, and for the electronic Schr\"{o}dinger equation in~\cite{Harry25}.

Apart from the Barron function-based approach, another influential deep learning methodology was proposed by E et al.~\cite{EHJ17,HJE18}, who reformulated certain parabolic PDEs as backward stochastic differential equations (BSDEs), and further interpreted these BSDEs as stochastic control problems. These problems, in turn, can be viewed as instances of model-based reinforcement learning. This SDE-based framework was later extended by Sirignano and Spiliopoulos~\cite{Sir18}. We refer to~\cite{AYT22, M18,LL24,LVR24} for additional applications of SDE-based methods to high-dimensional PDEs.

Alternative deep learning-based approaches have also been developed, typically relying on strong structural assumptions on the coefficients of the underlying PDEs. For instance,~\cite{PhilippL21, GHJv23} investigate classes of parabolic and Poisson equations that admit stochastic representations, such as those derived from the Feynman--Kac formula. In contrast,~\cite{MLR21} proposes neural network approximation conditions on the coefficients of second-order elliptic PDEs.

\subsection{Organization}
Section~\ref{sec:2} introduces the Barron spaces and their basic properties before stating the main results. Section~\ref{sec:proofsofmain} presents the proofs based on the three-step strategy outlined in Section~\ref{ourcontri}.

\section{Barron Spaces and Main Results}
\label{sec:2}
In this section, we present our main results. 
To ensure the existence and uniqueness of weak solutions to PDEs~\eqref{Diri} and~\eqref{Newu}, we impose the following assumption on their coefficients. All functions in this paper are real-valued unless otherwise noted.

\begin{assumption}[Coefficients of Elliptic PDEs]
\label{A1}
\hfill
\begin{enumerate}[(1)]
	\item 
The matrix-valued function \( A(x) = (A_{ij}(x))_{1 \leq i,j \leq d} \) defined on \( \Omega \) is symmetric and \emph{uniformly elliptic}. By uniform ellipticity, we mean that there exists a constant \( a_{\min} > 0 \) such that
\[
v^{\top} A(x) v \geq a_{\min} | v |^2 \quad \text{for all } x \in \Omega \text{ and } v \in \mathbb{R}^d,
\]
where \( |v| \) denotes the standard Euclidean norm in \( \mathbb{R}^d \).  
Moreover, the \emph{operator norm} of \( A(x) \), defined as
\[
\|A\|_{\mathrm{op}} \coloneqq \sup_{x \in \Omega} \sup_{v \in \mathbb{R}^d \setminus \{0\}} \frac{|A(x)v|}{|v|},
\]
is finite, and we denote \( a_{\max} \coloneqq \|A\|_{\mathrm{op}}\). 
 
\item The scalar coefficient function \( c(x) \) defined on \( \Omega \) satisfies 
\[
0 < c_{\min} \leq c(x) \leq c_{\max} < \infty \quad \text{for all } x \in \Omega.
\]

\item The source term \( f(x) \) belongs to \( L^2(\Omega) \).
\end{enumerate}
\end{assumption}

We refer to \cite{Evans10, Lieb13} for a comprehensive treatment of weak solutions in Sobolev spaces, and provide a brief summary here for completeness. 
A function \( u_D^* \in H_0^1(\Omega) \) is said to be a weak solution of the Dirichlet problem~\eqref{Diri} if it satisfies
\begin{equation}
\label{uDprop}
\int_{\Omega} \left(\nabla u_D^* \cdot A\nabla v + c u_D^* v - f v \right) \dd x = 0 \quad \text{for all } v \in H_0^1(\Omega).
\end{equation}
A function \( u_N^* \in H^1(\Omega) \) is said to be a weak solution of the Neumann problem~\eqref{Newu} if it satisfies
\begin{equation}
\label{uNprop}
\int_{\Omega} \left( \nabla u_N^* \cdot A\nabla v + c u_N^* v - f v \right) \dd x = 0 \quad \text{for all } v \in H^1(\Omega).
\end{equation}
Assumption~\ref{A1} and the Lax--Milgram theorem~\cite[Theorem 6.2.1]{Evans10} ensure the existence and uniqueness of weak solutions \( u_D^* \in H_0^1(\Omega) \) and \( u_N^* \in H^1(\Omega) \) to~\eqref{Diri} and~\eqref{Newu}, respectively.

\subsection{Barron Spaces on the Unit Hypercube}
\label{subsecBarron}
To define the four types of Barron functions considered in this paper, we begin by introducing in Theorem~\ref{bases} four families of functions in \( L^2(\Omega) \), which serve as building blocks for representing elements in this space. Some of these families form bases of \( L^2(\Omega) \). The justification is deferred to Appendix~\ref{pfbases}.
\begin{theorem}
\label{bases}
Let \( \mathbb{N} \coloneqq \{0, 1, 2, \dots\} \) denote the set of natural numbers.  
For \( k = (k_1, \dots, k_d) \in \mathbb{N}^d \), \( \omega \in \mathbb{Z}^d \), and \( x = (x_1, \dots, x_d) \in \Omega \),  we define the following four families of functions in \( L^2(\Omega) \):
\begin{enumerate}[(1)]
    \item \( \left\{ e^{i \pi \omega^{\top} x} : \omega \in \mathbb{Z}^d \right\} \),
    \item \( \left\{ S_k(x) =\prod_{i=1}^{d} \sin(\pi k_i x_i) : k \in \mathbb{N}^d,\ k_i \neq 0  \text{ for all } i \right\} \),
    \item \( \left\{ C_k(x) = \prod_{i=1}^{d} \cos(\pi k_i x_i) : k \in \mathbb{N}^d \right\} \),
    \item \( \left\{ M_{k,i,j}(x) = \sin(\pi k_i x_i)\sin(\pi k_j x_j)
\prod_{m\neq i,j}\cos(\pi k_m x_m) : k \in \mathbb{N}^d,\ k_i,k_j\neq 0 \right\} \), where \( 1 \leq i \ne j \leq d \) are fixed. 
\end{enumerate}
Every function in \( L^2(\Omega) \) admits an \( L^2 \)-expansion in family~(1) with coefficients in \(\mathbb{C}\), though the representation may be non-unique. Families~(2)--(4) form orthogonal bases of \( L^2(\Omega) \) with coefficients in \(\mathbb{R}\).
\end{theorem}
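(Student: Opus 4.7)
The plan is to exploit the tensor-product structure $L^2(\Omega)\cong L^2(0,1)^{\otimes d}$, which reduces every claim to one-dimensional Fourier analysis together with the standard tensor-product criterion: if $\{\varphi^{(i)}_{k}\}_k$ is an orthogonal (respectively complete) system in $L^2(0,1)$ for each coordinate $i$, then $\{\prod_i \varphi^{(i)}_{k_i}\}$ is an orthogonal (respectively complete) system in $L^2(\Omega)$. Orthogonality follows immediately from Fubini, and completeness from the density of separable finite sums in $L^2(\Omega)$ (e.g.\ via Stone--Weierstrass, or equivalently the Hilbert-space tensor product). I will apply this template three times, once for each of families~(2)--(4).

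Concretely, for family~(2) I take $\{\sqrt{2}\sin(\pi k x)\}_{k\ge 1}$, the Dirichlet eigenbasis of $-\partial_x^2$ on $(0,1)$, which is a classical complete orthonormal system in $L^2(0,1)$. For family~(3), I replace it by the Neumann eigenbasis $\{1\}\cup\{\sqrt{2}\cos(\pi k x)\}_{k\ge 1}$. For family~(4), I use the sine basis in the two distinguished coordinates $i,j$ (forcing $k_i,k_j\ne 0$) and the cosine basis in all remaining coordinates, and apply the same argument. In each case, orthogonality and completeness in $L^2(\Omega)$ transfer directly from the 1D building blocks, and the restriction $k_i\ne 0$ on the sine indices is simply because the zeroth sine mode vanishes identically.

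Family~(1) requires a separate argument since $\{e^{i\pi\omega^\top x}\}_{\omega\in\mathbb{Z}^d}$ is not orthogonal on $\Omega$: these are the Fourier modes of the period-$2$ torus $(-1,1)^d$, not of $(0,1)^d$. To produce an $L^2$-expansion I would extend $f\in L^2(\Omega)$ to some $\tilde f\in L^2((-1,1)^d)$ (for instance by zero extension, or by even reflection), expand $\tilde f$ in the orthonormal basis $\{2^{-d/2}e^{i\pi\omega^\top x}\}_{\omega\in\mathbb{Z}^d}$ of $L^2((-1,1)^d)$, and restrict the resulting series back to $\Omega$. Non-uniqueness is transparent from this construction: any two extensions of the same $f$ differ by an $L^2$-function supported in $(-1,1)^d\setminus\Omega$, whose Fourier series on $(-1,1)^d$ is generically nontrivial yet vanishes after restriction to $\Omega$, producing precisely the ambiguity illustrated in Example~\ref{nonuni}.

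I do not anticipate a genuine analytic obstacle here: the argument is essentially bookkeeping around normalization constants, the specified index ranges, and verifying the orthogonality/completeness of the mixed-coordinate system~(4). All the substantive input --- completeness of the one-dimensional sine and cosine systems and the density of algebraic tensors in the Hilbert tensor product --- is entirely classical, so the proof amounts to assembling these ingredients in the form demanded by the statement.
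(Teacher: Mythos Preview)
Your proposal is correct. For family~(1) you do exactly what the paper does: extend to $(-1,1)^d$, expand in the genuine Fourier basis there, and restrict back, with non-uniqueness arising from the choice of extension. For families~(2)--(4) your route differs from the paper's: you invoke the tensor-product criterion, building the basis of $L^2(\Omega)$ coordinatewise from the one-dimensional Dirichlet or Neumann eigenbases, whereas the paper argues by odd/even/mixed reflection of $g\in L^2(\Omega)$ to $\widetilde g\in L^2((-1,1)^d)$ and then reads off the expansion from the known sine--cosine basis on the larger cube. Both arguments are short and rest on the same classical input; your tensor-product packaging is arguably cleaner and makes the mixed case~(4) a trivial variant rather than a separate extension construction, while the paper's reflection argument has the minor advantage of deriving everything uniformly from the single completeness statement for exponentials on $(-1,1)^d$.
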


\begin{definition}[Barron Norms and Barron Functions]
Suppose that \(g\in L^2(\Omega)\), and adopt the convention \(0^0=1\).
	\begin{enumerate}[(1)]
	    \item   We say that \( g \) is an \emph{\( e \)-Barron function} of weight \( n \geq 0 \) if its weighted \emph{\( e \)-Barron norm} 
\[
\|g\|_{\mathcal{B}_e^n(\Omega)} \coloneqq 
\inf \left\{ 
\sum_{\omega \in \mathbb{Z}^d} |a_{\omega}| \left( 1 + \pi^n |\omega|^n \right)
: 
g(x) = \sum_{\omega \in \mathbb{Z}^d} a_{\omega} e^{i \pi \omega^{\top} x} \; \text{in } L^2(\Omega)
\right\}.
\]
is finite. The space \( \mathcal{B}_e^n(\Omega) \) is referred to as the \emph{\( e \)-Barron space} of weight \( n \).

		\item We say that \( g \) is an \emph{\( s \)-Barron function} of weight \( n \geq 0 \)  if it admits the sine basis expansion
\[
g(x) = \sum_{k \in \mathbb{N}^d} a_k S_k(x)\quad \text{in } L^2(\Omega),
\]
with finite weighted \emph{\( s \)-Barron norm}
\[
\|g\|_{\mathcal{B}_s^n(\Omega)} \coloneqq \sum_{k \in \mathbb{N}^d} |a_k| \left(1 + \pi^n |k|^n \right)
\]
The space \( \mathcal{B}_s^n(\Omega) \) is referred to as the \emph{\( s \)-Barron space} of weight \( n \).
In sine expansions, we adopt the conventions that \(a_k=0\) whenever at least one component of \(k\) is zero.

\item We say that \( g \) is a \emph{\( c \)-Barron function} of weight \( n \geq 0 \)  if it admits the cosine basis expansion
\[
g(x) = \sum_{k \in \mathbb{N}^d} a_k C_k(x)\quad \text{in } L^2(\Omega),
\]
with finite weighted \emph{\( c \)-Barron norm}
\[
\|g\|_{\mathcal{B}_c^n(\Omega)} \coloneqq \sum_{k \in \mathbb{N}^d} |a_k| \left(1 + \pi^n |k|^n \right).
\]
The space \( \mathcal{B}_c^n(\Omega) \) is referred to as the \emph{\( c \)-Barron space} of weight \( n \).

\item 
We say that \( g \) is an \emph{\((i,j)\)-mixed Barron function} of weight \( n \geq 0 \) (for fixed indices \( i \neq j \)) if it admits a mixed basis expansion
 \[
g(x) = \sum_{k \in \mathbb{N}^d} a_k M_{k,i,j}(x)\quad \text{in } L^2(\Omega),
\]
with finite weighted \emph{\((i,j)\)-mixed Barron norm}
\[
\|g\|_{\mathcal{B}_{i,j}^n(\Omega)} \coloneqq \sum_{k \in \mathbb{N}^d} |a_k| \left(1 + \pi^n |k|^n \right).
\]
The space \( \mathcal{B}_{i,j}^n(\Omega) \) is referred to as the \emph{\((i,j)\)-mixed Barron space} of  weight \( n \).
In mixed expansions, we adopt the conventions that \(a_k=0\) whenever \(k_i=0\) or \(k_j=0\).
	\end{enumerate}
\end{definition}

The original definition of the Barron norm for functions on \( \mathbb{R}^d \) was introduced in~\cite{Barron93}, as given in equation~\eqref{oB}. In contemporary literature, the version of the Barron norm defined via the Fourier transform, used in, e.g.,~\cite{SIEGELXu21, Mar23, Feng25}, is commonly referred to as the \emph{spectral Barron norm}. In contrast, definitions that avoid the use of the Fourier transform, such as those based on probability measures (e.g.,~\cite{Chen21, E, SX23}), are typically referred to simply as the \emph{Barron norm}.
Our formulation is inspired by~\cite{LuLuWang21, LuLu22, Mar23}, where cosine basis functions were used in~\cite{LuLuWang21, LuLu22} and exponential basis functions in~\cite{Mar23}. In this work, we introduce the sine-based Barron norm and the \((i,j)\)-mixed Barron norm. The sine basis arises naturally when considering functions that vanish on the boundary, while the mixed norm is introduced for certain technical reasons (cf.~Lemma~\ref{lem:Ajterms}). 
We also note that if the coefficient matrix \( A(x) \) is diagonal, then the mixed Barron norm is not needed. This observation already covers many important families of PDEs, such as the stationary reaction--diffusion equations and static Schr\"{o}dinger equations.

Note that if \( g \) is an \( s \)-Barron function, then any expansion with respect to the sine basis can be directly rewritten in terms of the exponential basis:
\[
    g(x)  = \sum_{k \in \mathbb{N}^d} a_k S_k(x)
= \sum_{k \in \mathbb{N}^d} \frac{a_k}{(2i)^d} \prod_{j=1}^{d} \left( e^{i \pi k_j x_j} - e^{-i \pi k_j x_j} \right)  = \sum_{k \in \mathbb{N}^d} \frac{a_k}{(2i)^d} \sum_{\epsilon\in\{\pm 1\}^d} \prod_{j=1}^d \epsilon_j \cdot e^{i\pi (\epsilon\circ k)^\top x},
\]
where $\epsilon\circ k$ is the Hadamard product of $\epsilon$ and $k$.
This observation implies that
\[
\|g\|_{\mathcal{B}_e^n(\Omega)} 
\leq 
\frac{1}{2^d}
\sum_{k \in \mathbb{N}^d}\sum_{\epsilon\in\{\pm 1\}^d}
 |a_k| \left( 1 + \pi^n |\epsilon\circ k|^n \right)
= \|g\|_{\mathcal{B}_s^n(\Omega)}.
\]
A similar argument applies to other types of Barron functions, yielding the following result.

\begin{proposition}
\label{prop:barron}
Let \( \widetilde{\mathcal{B}}^n(\Omega) \) denote any of the spaces \( \mathcal{B}_s^n(\Omega) \), \( \mathcal{B}_c^n(\Omega) \), or \( \mathcal{B}_{i,j}^n(\Omega) \).  
If \( g \in \widetilde{\mathcal{B}}^n(\Omega) \), then
\[
\|g\|_{\mathcal{B}_e^n(\Omega)} \leq \|g\|_{\widetilde{\mathcal{B}}^n(\Omega)}.
\]
In particular, the following inclusions hold:
\[
\mathcal{B}_s^n(\Omega),\; \mathcal{B}_c^n(\Omega),\; \mathcal{B}_{i,j}^n(\Omega) \subseteq \mathcal{B}_e^n(\Omega).
\]
\end{proposition}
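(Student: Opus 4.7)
The plan is to run the same template already displayed just before the proposition statement, where it produced the sine-case bound $\|g\|_{\mathcal{B}_e^n(\Omega)} \leq \|g\|_{\mathcal{B}_s^n(\Omega)}$, in turn for the cosine basis $\{C_k\}$ and the mixed basis $\{M_{k,i,j}\}$. In each of these two remaining cases the key identity is Euler's formula: writing $\cos(\pi k_j x_j) = \tfrac{1}{2}(e^{i\pi k_j x_j} + e^{-i\pi k_j x_j})$ and $\sin(\pi k_l x_l) = \tfrac{1}{2i}(e^{i\pi k_l x_l} - e^{-i\pi k_l x_l})$ and expanding the product turns each basis element $C_k$ or $M_{k,i,j}$ into a sum of exactly $2^d$ complex exponentials $e^{i\pi v_m^\top x}$, where $v_m = (\pm k_1, \dots, \pm k_d) \in \mathbb{Z}^d$ ranges over the $2^d$ sign patterns. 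Crucially, in both cases $\|v_m\| = \|k\|$ for every $m$, and the coefficient in front of each exponential has absolute value $1/2^d$, up to signs $\pm 1$ and factors of $i$ introduced by the two sine slots in the mixed case.

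Given this, for $g = \sum_k a_k C_k$ or $g = \sum_k a_k M_{k,i,j}$ I would substitute the exponential expansion of each basis element and reindex the resulting double sum to produce one representation of $g$ in the exponential family. Invoking the definition of the $e$-Barron norm as an infimum over all such representations, together with the reordering invariance from Remark~\ref{reorder}, then yields
\[
\|g\|_{\mathcal{B}_e^n(\Omega)} \leq \sum_{k \in \mathbb{N}^d} \sum_{m=1}^{2^d} \frac{|a_k|}{2^d}\bigl(1 + \pi^n \|v_m\|^n\bigr) = \sum_{k \in \mathbb{N}^d} |a_k|\bigl(1 + \pi^n \|k\|^n\bigr) = \|g\|_{\widetilde{\mathcal{B}}^n(\Omega)},
\]
and the inclusions $\mathcal{B}_c^n(\Omega), \mathcal{B}_{i,j}^n(\Omega) \subseteq \mathcal{B}_e^n(\Omega)$ follow at once.

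The only point requiring genuine care rather than routine bookkeeping is checking that the rearranged exponential series actually represents $g$ in $L^2(\Omega)$, and not merely some other function with the same formal partial sums; this is non-trivial because exponential representations need not be unique in general (Example~\ref{nonuni}). This is handled exactly as in estimate~\eqref{noL2}: since $\sum_k |a_k|(1 + \pi^n \|k\|^n) = \|g\|_{\widetilde{\mathcal{B}}^n(\Omega)} < \infty$ by hypothesis, the Weierstrass $M$-test gives absolute and uniform convergence of the rearranged series to a continuous function on $\Omega$, which therefore coincides almost everywhere with the $L^2$-limit $g$. Non-uniqueness is harmless here, because we are producing \emph{one} admissible representation of $g$ in the exponential family and invoking the infimum in the definition of the $e$-Barron norm.
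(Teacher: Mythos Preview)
Your proposal is correct and follows exactly the approach the paper intends: the paper carries out the sine case explicitly in the paragraph preceding the proposition and then simply states that ``a similar argument applies to other types of Barron functions,'' which is precisely the template you spell out for $C_k$ and $M_{k,i,j}$. Your additional care about the $L^2$-representation via the Weierstrass $M$-test is, if anything, more explicit than what the paper writes down.
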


\subsection{Main Approximation Results}
\label{subsecmain}
Our main results are established under the following assumptions on the coefficients appearing in equations~\eqref{Diri} and~\eqref{Newu}.
\begin{assumption}[Coefficients of the Dirichlet Problem]
\label{A2}
\hfill
\begin{enumerate}[(1)]
	\item The entries of the coefficient matrix \( A(x) = (A_{ij}(x))_{1 \leq i,j \leq d} \) satisfy the following: \( A_{ii}(x) \in \mathcal{B}_c^1(\Omega) \) for all \( i \), and \( A_{ij}(x) \in \mathcal{B}_{i,j}^1(\Omega) \) for all \( i \ne j \). We define
\[
\ell_{A} \coloneqq \max\left\{ \max_{1 \leq i \leq d} \|A_{ii}\|_{\mathcal{B}_c^1(\Omega)},\ \max_{1 \leq i \ne j \leq d} \|A_{ij}\|_{\mathcal{B}_{i,j}^1(\Omega)} \right\}.
\]
\item The scalar coefficient \( c(x) \) lies in \( \mathcal{B}_c^2(\Omega) \), and we denote its Barron norm by \( \ell_{c} \coloneqq \|c\|_{\mathcal{B}_c^2(\Omega)} \).  
\item The source function \( f(x) \) belongs to \( \mathcal{B}_s^0(\Omega) \), with norm \( \ell_{f,D} \coloneqq \|f\|_{\mathcal{B}_s^0(\Omega)} \).
\end{enumerate}
\end{assumption}

\begin{assumption}[Coefficients of the Neumann Problem]
\label{A2'}
Same as Assumption~\ref{A2}, except that \(f\in\mathcal{B}_c^0(\Omega)\), with \(\ell_{f,N}\coloneqq\|f\|_{\mathcal{B}_c^0(\Omega)}\).
\end{assumption}

For consistency in the subsequent analysis, we equip the Hilbert space \( H_0^1(\Omega) \) with the inner product inherited from the standard inner product on \( H^1(\Omega) \),
\[
(u,v)_{H^1(\Omega)}
\coloneqq
\int_{\Omega}
\nabla u\cdot\nabla v + uv \,\mathrm{d}x,
\quad
u,v\in H_0^1(\Omega).
\]
Although the norm induced by this inner product differs from the usual one on \( H_0^1(\Omega) \), which is based solely on the gradient term, they are equivalent by Poincar\'{e}'s inequality (cf.~\cite[Theorem~5.6.3]{Evans10}). This choice has no effect on our results.

We now state the first main result of the paper. Further quantitative details are provided in Remark~\ref{rmk:para}, and the proof is deferred to the next section.

\begin{theorem}[Main Result 1]
\label{mainthm}
Let \(u_D^*\in H_0^1(\Omega)\) and \(u_N^*\in H^1(\Omega)\) be the weak
solutions of the Dirichlet problem~\eqref{Diri} and the Neumann
problem~\eqref{Newu}, respectively.

\begin{enumerate}[(1)]
    \item Suppose that Assumptions~\ref{A1} and~\ref{A2} hold. Then, for every
\(0<\varepsilon<1/2\), there exist a positive integer \(k\) and a two-layer
cosine neural network
\begin{equation}
\label{eq:NkNN}
N_k(x)
=
\frac{1}{k}\sum_{i=1}^k
a_i\cos\bigl(w_i^\top x+b_i\bigr),
\end{equation}
whose parameters and \(e\)-Barron norm satisfy
\begin{equation}
\label{eq:Nksat}
|a_i|\leq M_D,\quad
w_i\in\pi\BZ^d,\quad
\frac{1}{k} \sum_{i=1}^k |a_i|\left(1+|w_i|^2\right)\leq M_D,\quad
\|N_k\|_{\mathcal{B}_e^2(\Omega)}\leq M_D,
\end{equation}
for \(i=1,\ldots,k\), and such that
\[
\|N_k-u_D^*\|_{H^1(\Omega)}
\leq \varepsilon.
\]

    \item Suppose Assumptions~\ref{A1} and~\ref{A2'} hold. Then, for every
\(0<\varepsilon<1/2\), there exists a two-layer cosine neural network of
the same form as~\eqref{eq:NkNN}, satisfying~\eqref{eq:Nksat} with
\(D\) replaced by \(N\), such that
\[
\|N_k-u_N^*\|_{H^1(\Omega)}
\leq \varepsilon.
\]
\end{enumerate}
In both cases, the network width \(k\) can be chosen such that \(k=\mathcal{O}\bigl(d^{C_1|\log\varepsilon|}\bigr)\), and the corresponding parameter bound \(M_X\), with \(X\in\{D,N\}\), satisfies \(M_X=\mathcal{O}\bigl(d^{C_2|\log\varepsilon|}\bigr)\), where \(C_1,C_2>0\) depend only on the ellipticity constants and the relevant Barron and Sobolev norms of the coefficients and the forcing term.
\end{theorem}

\begin{remark}[Quantitative Estimates for Cosine Networks]
\label{rmk:para}
The network widths in Theorem~\ref{mainthm} can be chosen to satisfy
\[
k_D
\leq
\left\lfloor 4M_D^2\varepsilon^{-2}\right\rfloor+1,
\qquad
k_N
\leq
\left\lfloor 4M_N^2\varepsilon^{-2}\right\rfloor+1,
\]
for the Dirichlet and Neumann cases, respectively, where
\[
M_X
\coloneqq
\frac{\alpha_{\ast}\ell_{f,X}
\bigl(p(d)^{T_X}-1\bigr)}
{2\bigl(p(d)-1\bigr)},
\
X\in\{D,N\},
\qquad
p(d)
=
\frac{1+\pi}{\pi}\alpha_{\ast}\ell_A d^2
+\alpha_{\ast}\ell_c+1,
\]
and
\[
 T_D=\left\lceil
 \frac{\log_+\!\left(2\|f\|_{H^{-1}(\Omega)}/
 (\varepsilon\lambda_{\min})\right)}{|\log\beta_*|}
 \right\rceil,\qquad
 T_N=\left\lceil
 \frac{\log_+\!\left(2\|f\|_{(H^1(\Omega))^*}/
 (\varepsilon\lambda_{\min})\right)}{|\log\beta_*|}
 \right\rceil.
\]
Here
\[
 \alpha_*=\frac{\lambda_{\min}}{\lambda_{\max}^2},\quad
 \beta_*=\left(1-\frac{\lambda_{\min}^2}{\lambda_{\max}^2}\right)^{1/2},\quad
 \lambda_{\min}=\min\{a_{\min},c_{\min}\},\quad
 \lambda_{\max}=\max\{a_{\max},c_{\max}\}.
\]
When \(\beta_*=0\), we set \(T_D=T_N=1\).
For simplicity, we use the above definition of \(p(d)\). Slightly smaller choices are possible; see the proof of
Theorem~\ref{thm:baroonnormut+1}. Nevertheless, this quantity retains
the same quadratic scaling in \(d\).
\end{remark}

Our result is stronger and more general than those in~\cite{Chen21,Mar23} in the following aspects:
\begin{enumerate}[(1)]
    \item Unlike~\cite{Chen21}, we do not require the solution of the Dirichlet problem~\eqref{Diri} to be the restriction of a solution defined on \(\mathbb{R}^d\). We also establish the explicit parameter bounds in~\eqref{eq:Nksat}, which were not provided in~\cite{Chen21};
    \item The work~\cite{Mar23} claims a comparable network size. However, its derivation relies on the assumption that only finitely many coefficients in the Barron expansion are nonzero, an assumption that is not required in our analysis;
    \item In addition to considering only the Dirichlet problem as in~\cite{Chen21,Mar23}, 
we also provide a result for the Neumann problem.
\end{enumerate}

Theorem~\ref{mainthm} concerns neural networks with the cosine activation, a representative periodic activation function whose effectiveness has been empirically demonstrated in recent works~\cite{HMRM20,MTS21,Benb22,RRG25}. We next establish an analogous result for the more commonly used \(\operatorname{ReLU}\) activation by adapting the approximation techniques of~\cite{LuLuWang21}. Explicit parameter choices and quantitative width bounds are provided in the remark following the theorem, while the proof is deferred to the next section.
\begin{theorem}[Main Result 2]
\label{mainthm2}
Let \(u_D^*\in H_0^1(\Omega)\) and \(u_N^*\in H^1(\Omega)\) be the weak
solutions of the Dirichlet problem~\eqref{Diri} and the Neumann
problem~\eqref{Newu}, respectively.

\begin{enumerate}[(1)]
    \item 
Suppose that Assumptions~\ref{A1} and~\ref{A2} hold. Then, for every
\(0<\varepsilon<1/2\), there exists a two-layer ReLU neural network of the form
\begin{equation}
    \label{eq:Rk}
    R_k(x)
    =
    \gamma+\sum_{i=1}^k
    a_i\operatorname{ReLU}\bigl(w_i^\top x+b_i\bigr),
\end{equation}
whose coefficients satisfy
\begin{equation}
    \label{eq:coffebd}
    |\gamma|
    \leq
    2M_D,
    \quad
    \sum_{i=1}^{k}|a_i|
    \leq
    4\sqrt{d}M_D,
    \quad
    |w_i|=1,
    \quad
    |b_i|\leq\sqrt{d},
\end{equation}
for \(i=1,\ldots,k\), and such that
\[
    \| R_k-u_D^*\|_{H^1(\Omega)}
    \leq \varepsilon.
\]
    
    \item Suppose that Assumptions~\ref{A1} and~\ref{A2'} hold. Then, for every
\(0<\varepsilon<1/2\), there exists a two-layer ReLU network \(R_k\)
satisfying~\eqref{eq:Rk} and~\eqref{eq:coffebd}, with \(D\) replaced
by \(N\), such that
\[
    \lVert R_k-u_N^*\rVert_{H^1(\Omega)}\leq\varepsilon.
\]
\end{enumerate}
In both cases, the network width \(k\) can be chosen such that \(k=\mathcal{O}\bigl(d^{C_1|\log\varepsilon|}\bigr)\), and the corresponding parameter bound \(M_X\), with \(X\in\{D,N\}\), satisfies \(M_X=\mathcal{O}\bigl(d^{C_2|\log\varepsilon|}\bigr)\),
where \(C_1, C_2>0\) depend only on the ellipticity constants and the relevant
norms of the PDE coefficients and the forcing term.
\end{theorem}
\begin{remark}[Quantitative Width Bounds for $\operatorname{ReLU}$ Networks]
The network widths in Theorem~\ref{mainthm2} can be chosen to satisfy
\[
\begin{aligned}
    k_D
    &\le
    \left\lfloor
        4q(d)M_D^2\varepsilon^{-2}
    \right\rfloor+1,
    &\quad
    k_N
    &\le
    \left\lfloor
        4q(d)M_N^2\varepsilon^{-2}
    \right\rfloor+1,
\end{aligned}
\]
for the Dirichlet and Neumann cases, respectively.
Here, \(q(d)=64d^2+48d+4\), while all remaining parameters are defined in
Remark~\ref{rmk:para}.
\end{remark}

A similar approximation result with \(\operatorname{ReLU}\) activation was also given in~\cite{LuLuWang21} with the following differences:
\begin{enumerate}[(1)]
    \item  Our Barron norm is weaker and gives better dimension dependence than that in~\cite{LuLuWang21}.  
Their Barron norm is defined in a way similar to our \(c\)-Barron norm, 
but it is based on the \(\ell^1\)-norm 
\(\|k\|_1 \coloneqq |k_1| + \cdots + |k_d|\) rather than the Euclidean norm \(|k|\) used here;
\item We provide the explicit upper bounds on the network parameters given in~\eqref{eq:coffebd};
    \item We consider general second-order elliptic PDEs with both Dirichlet and Neumann boundary conditions, 
rather than only the Poisson equation and the stationary Schr\"odinger equation with Neumann boundary conditions.
\end{enumerate}

\section{Proof of the Main Results}
\label{sec:proofsofmain}
The objective of this section is to prove Theorems~\ref{mainthm} and~\ref{mainthm2}, 
following the three-step outline presented in Section~\ref{ourcontri}.
Our argument builds upon the methodology developed in~\cite{Chen21, LuLuWang21, Mar23, CLLZ24}.

\subsection{Step 1: Sobolev Gradient Descent}
\subsubsection{Iterative Schemes}
It is well known that the solutions to the Dirichlet problem~\eqref{Diri} and the Neumann problem~\eqref{Newu} in the Sobolev space setting can be characterized as minimizers of suitable energy functionals. This classical characterization is summarized in the following proposition. 
\begin{proposition}[Variational Characterization of Weak Solutions]
\label{energy}
\hfill
\begin{enumerate}[(1)]
    \item Suppose Assumption~\ref{A1} holds. Let \( u_D^* \in H_0^1(\Omega) \) be the weak solution to the Dirichlet problem~\eqref{Diri}. Then \( u_D^* \) is the unique minimizer of the energy functional
    \begin{equation}
        \label{eq:energyD}
        u_D^* = \mathop{\operatorname{arg\,min}}_{u \in H_0^1(\Omega)} \mathcal{E}_D(u),\quad \mathcal{E}_D(u) \coloneqq \int_{\Omega} \left( \frac{1}{2} \nabla u \cdot A \nabla u + \frac{1}{2} c u^2 - f u  \right) \mathrm{d}x.
    \end{equation}
    
    \item Suppose Assumption~\ref{A1} holds. Let \( u_N^* \in H^1(\Omega) \) be the weak solution to the Neumann problem~\eqref{Newu}. Then \( u_N^* \) is the unique minimizer of the energy functional
    \begin{equation}
       \label{eq:energyN}
        u_N^* = \mathop{\operatorname{arg\,min}}_{u \in H^1(\Omega)} \mathcal{E}_N(u),\quad
        \mathcal{E}_N(u) \coloneqq \int_{\Omega} \left( \frac{1}{2} \nabla u \cdot A \nabla u + \frac{1}{2} c u^2 - f u  \right) \mathrm{d}x.
    \end{equation}
\end{enumerate}
\end{proposition}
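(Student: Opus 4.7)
The plan is to treat both parts uniformly, as they differ only in the choice of function space ($H_0^1(\Omega)$ vs.\ $H^1(\Omega)$); no boundary contributions appear in either case because the Dirichlet condition is encoded by the space and the Neumann condition is natural. The strategy is the standard ``first variation $+$ strict convexity'' argument. I would first compute the first variation of $\CE_D$: for $u, v \in H_0^1(\Omega)$ and $t \in \BR$, expand
\[
\CE_D(u+tv) = \CE_D(u) + t \int_\Omega \nabla u \cdot A\nabla v + c u v - f v \dd x + \frac{t^2}{2}\int_\Omega \nabla v \cdot A\nabla v + c v^2 \dd x,
\]
where the cross term combines using the symmetry of $A$ from Assumption~\ref{A1}(1). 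Differentiating at $t=0$, the Gateaux derivative of $\CE_D$ at $u$ in direction $v$ equals the bilinear form appearing in \eqref{uDprop}. Hence $u$ is a critical point of $\CE_D$ on $H_0^1(\Omega)$ if and only if $u = u_D^*$ is a weak solution in the sense of \eqref{uDprop}. The identical computation, now with test functions $v \in H^1(\Omega)$, handles $\CE_N$ and yields the equivalence with \eqref{uNprop}.

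Next I would establish strict convexity: the second-order term in the expansion above gives, for any $u, v$,
\[
\CE_D(u+v) - \CE_D(u) - \delta\CE_D(u)[v] = \frac{1}{2}\int_\Omega \nabla v \cdot A \nabla v + c v^2 \dd x \geq \frac{\lambda_{\min}}{2}\|v\|_{H^1(\Omega)}^2,
\]
using uniform ellipticity ($v^\top A v \geq a_{\min}\|v\|^2$) and $c \geq c_{\min} > 0$. This is both a strict convexity statement and a quadratic lower bound, so any critical point is automatically the unique global minimizer. Combined with the equivalence from the previous paragraph, this shows that a weak solution of \eqref{Diri} is the unique minimizer of $\CE_D$ over $H_0^1(\Omega)$. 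The same argument, verbatim, works for $\CE_N$ over $H^1(\Omega)$.

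Finally, existence of a minimizer does not require a separate argument, because existence and uniqueness of the weak solutions $u_D^* \in H_0^1(\Omega)$ and $u_N^* \in H^1(\Omega)$ have already been obtained in Section~\ref{subsecweaksol} via the Lax--Milgram theorem. Alternatively one could invoke directly the direct method of the calculus of variations: the quadratic lower bound above gives coercivity of $\CE_D$ and $\CE_N$, and the weak lower semicontinuity of the convex quadratic functional together with the weak continuity of the linear term $v \mapsto \int_\Omega f v \dd x$ (for $f \in L^2(\Omega)$) yields a minimizer, which must then coincide with $u_D^*$ or $u_N^*$ respectively. I do not anticipate a serious obstacle: the only point that requires any care is ensuring that the cross-term symmetry in the expansion of $\CE_D(u+tv)$ genuinely uses $A = A^\top$, and that for the Neumann case the full space $H^1(\Omega)$ is admissible because the boundary flux term that would normally appear when integrating by parts vanishes as a consequence of the natural boundary condition rather than being imposed a priori.
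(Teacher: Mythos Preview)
Your argument is correct. The paper takes a shorter but less self-contained route: it simply invokes Stampacchia's theorem (\cite[Theorem~5.6]{Brezis11}), reducing the claim to verifying the variational inequality
\[
\int_{\Omega} \nabla u_D^* \cdot A \nabla(v - u_D^*) + c\,u_D^*(v-u_D^*) \,\mathrm{d}x
\geq \int_{\Omega} f(v - u_D^*) \,\mathrm{d}x \quad \text{for all } v \in H_0^1(\Omega),
\]
which holds with equality by the definition of the weak solution~\eqref{uDprop}. Your approach instead unpacks what Stampacchia's theorem encodes in the symmetric case: you compute the first variation to identify critical points with weak solutions, and then use the quadratic remainder together with uniform ellipticity and $c\geq c_{\min}>0$ to obtain strict convexity and hence uniqueness of the minimizer. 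The trade-off is that the paper's proof is two lines but presupposes familiarity with Stampacchia's theorem, whereas yours is elementary and fully explicit, and in addition makes transparent exactly where the symmetry of $A$ and the positivity of $c$ enter---information that is reused later in the paper (e.g., in the contraction estimate of Theorem~\ref{thm:uttou*}).
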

\begin{proof}
	We only prove (1), as the proof of (2) follows similarly. By Stampacchia's theorem (see~\cite[Theorem 5.6]{Brezis11}), it suffices to verify that
\[
\int_{\Omega} \nabla u_D^* \cdot A \nabla(v - u_D^*)+cu_D^*(v-u_D^*) \,\mathrm{d}x 
\geq \int_{\Omega} f(v - u_D^*) \,\mathrm{d}x \quad \text{for all } v \in H_0^1(\Omega).
\]
This inequality follows directly from the definition of the weak solution given in~\eqref{uDprop}.
\end{proof}

The classical gradient descent method iteratively updates an initial guess by moving in the direction of the negative gradient of the objective function, aiming to converge to a minimizer. In the context of the PDEs we consider, Proposition~\ref{energy} asserts that the weak solution is the unique minimizer of the associated energy functional. This naturally motivates the application of a gradient descent-type approach to solve the PDE. For the Dirichlet problem, we define the gradient of \( \mathcal{E}_D \) as the map
\[
\mathcal{G}_D \colon H_0^1(\Omega) \longrightarrow H_0^1(\Omega)
\]
such that
\begin{equation}
\label{DH01}
        \big(\mathcal{G}_D(u), v\big)_{H^1(\Omega)}
 = \lim_{t\to 0} \frac{\mathcal{E}_D(u+tv) - \mathcal{E}_D(u)}{t} 
 = \int_{\Omega} \nabla u \cdot A \nabla v + c u v - f v \,\mathrm{d}x,
\end{equation}
for all $u, v \in H_0^1(\Omega)$.
Here we use the inner product on \( H_0^1(\Omega) \) inherited from the standard inner product on \( H^1(\Omega) \). The uniqueness of \( \mathcal{G}_D \) follows from the Riesz representation theorem, and we call \( \mathcal{G}_D \) the \emph{Sobolev gradient} of \( \mathcal{E}_D \) in \( H_0^1(\Omega) \). A similar argument applies to the energy functional \( \mathcal{E}_N \) for the Neumann problem, and there exists a unique operator \( \mathcal{G}_N \colon H^1(\Omega) \to H^1(\Omega) \) such that 
    \begin{equation}
    	\label{NH01}
    \big( \mathcal{G}_N(u), v \big)_{H^1(\Omega)} 
    = \int_{\Omega} \nabla u \cdot A \nabla v + c u v - f v \,\mathrm{d}x,
    \quad \text{for all } u, v \in H^1(\Omega).
    \end{equation}
    We call \( \mathcal{G}_N \) the \emph{Sobolev gradient} of \( \mathcal{E}_N \) in \( H^1(\Omega) \).
We remark that the concept of the Sobolev gradient has been widely employed in the study of PDEs, including applications to the Gross--Pitaevskii eigenvalue problem~\cite{HenP20,Zhang22,CLLZ24}.

Analogous to the fact that the gradient of a function vanishes at its minimizer, the Sobolev gradient of an energy functional should also vanish at its minimizer, i.e.,
\begin{equation}
\label{DE=0}
\mathcal{G}_D(u^*_D) = \mathcal{G}_N(u^*_N) = 0.
\end{equation}
In parallel with the classical gradient descent method,  we define iterative schemes based on the Sobolev gradients  \( \mathcal{G}_D \) and \( \mathcal{G}_N \)  to approximate the solutions of the Dirichlet problem~\eqref{Diri}  and the Neumann problem~\eqref{Newu}, respectively.  These schemes are referred to as \emph{Sobolev gradient descents},  as they can be interpreted as discrete-time approximations  
of the corresponding continuous Sobolev gradient descents  of the energy functionals.

\begin{itemize}
	\item \textbf{Sobolev gradient descent for the Dirichlet problem:}
	\begin{equation}
	\label{Diterationscheme}
	u_{t+1} = u_t - \alpha \mathcal{G}_D(u_t),
	\end{equation}
	with initial condition \( u_0 = 0 \), where \( \alpha > 0 \) is a step size to be specified later.
	
	\item \textbf{Sobolev gradient descent for the Neumann problem:}
	\begin{equation}
	\label{Niterationscheme}
	u_{t+1} = u_t - \alpha \mathcal{G}_N(u_t),
	\end{equation}
	with initial condition \( u_0 = 0 \), where \( \alpha > 0 \) is a step size to be specified later.
\end{itemize}

\subsubsection{Convergence Analysis} This subsection investigates the convergence rate of the Sobolev gradient descents to $u_D^*$ and $u_N^*$. In particular, we have the following theorem.

\begin{theorem}
\label{thm:uttou*}
Assume Assumption~\ref{A1} holds and let $\alpha>0$ be sufficiently small.
The Sobolev gradient descent~\eqref{Diterationscheme} for the Dirichlet problem~\eqref{Diri} converges exponentially in the \( H^1(\Omega) \)-norm to the weak solution \( u_D^* \). More precisely, there exists a constant \( 0 \leq \beta < 1 \) such that
\[
\| u_t - u_D^* \|_{H^1(\Omega)} \leq \| u_D^* \|_{H^1(\Omega)}\beta^t  \quad \text{for all } t \geq 0.
\]
An analogous result holds for the Neumann problem~\eqref{Newu} under the Sobolev gradient descent~\eqref{Niterationscheme}, using the same choice of step size \( \alpha \) and contraction factor \( \beta \).
\end{theorem}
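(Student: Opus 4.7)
The plan is to recast the iteration~\eqref{Diterationscheme} as the orbit of a single linear contraction on $H_0^1(\Omega)$, obtained by linearizing the Sobolev gradient around the solution. Concretely, I would introduce the bilinear form
\[
B(w,v) \coloneqq \int_{\Omega} \nabla w \cdot A \nabla v + c\, w v \,\mathrm{d}x
\]
on $H_0^1(\Omega)\times H_0^1(\Omega)$, and let $L\colon H_0^1(\Omega)\to H_0^1(\Omega)$ be the unique bounded operator with $(Lw,v)_{H^1(\Omega)} = B(w,v)$ for all $v$, which exists via the Riesz representation theorem applied to $v\mapsto B(w,v)$. A direct inspection of~\eqref{DH01} shows that $D_{H_0^1}\mathcal{E}_D$ is affine with linear part $L$, so using $D_{H_0^1}\mathcal{E}_D(u_D^*)=0$ from~\eqref{DE=0}, the error $e_t \coloneqq u_t - u_D^*$ satisfies the clean linear recurrence $e_{t+1} = (I - \alpha L)e_t$, starting from $e_0 = -u_D^*$.

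Next I would extract the required spectral information on $L$. The symmetry of $A(x)$ gives $B(w,v)=B(v,w)$, so $L$ is self-adjoint on $H_0^1(\Omega)$. The uniform ellipticity of $A$ together with $c\geq c_{\min}$ gives the coercivity
\[
(Lw,w)_{H^1(\Omega)} = \int_{\Omega} \nabla w \cdot A\nabla w + c w^{2}\,\mathrm{d}x \geq a_{\min}\|\nabla w\|_{L^2(\Omega)}^2 + c_{\min}\|w\|_{L^2(\Omega)}^2 \geq \lambda_{\min}\|w\|_{H^1(\Omega)}^2.
\]
For an upper bound, the operator-norm control $\|A(x)\|_{\mathrm{op}} \leq a_{\max}$ and $c \leq c_{\max}$, combined with Cauchy--Schwarz in $L^2$ and then in $\mathbb{R}^2$, give $B(w,v)\leq \lambda_{\max}\|w\|_{H^1(\Omega)}\|v\|_{H^1(\Omega)}$, whence $\|L\|_{\mathrm{op}} \leq \lambda_{\max}$.

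With these two bounds in hand, expanding the squared norm yields
\[
\|(I - \alpha L)w\|_{H^1(\Omega)}^2 = \|w\|_{H^1(\Omega)}^2 - 2\alpha (Lw,w)_{H^1(\Omega)} + \alpha^2 \|Lw\|_{H^1(\Omega)}^2 \leq \bigl(1 - 2\alpha\lambda_{\min} + \alpha^2\lambda_{\max}^2\bigr)\|w\|_{H^1(\Omega)}^2,
\]
so any $\alpha \in \bigl(0,\,2\lambda_{\min}/\lambda_{\max}^2\bigr)$---in particular $\alpha_\ast = \lambda_{\min}/(2\lambda_{\max}^2)$---makes the prefactor strictly less than $1$, giving $\|I - \alpha L\|_{\mathrm{op}}\leq \beta < 1$ for $\beta$ of the form prescribed in Remark~\ref{rmk:para}. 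Iterating the recurrence $e_{t+1} = (I-\alpha L)e_t$ then produces $\|e_t\|_{H^1(\Omega)} \leq \beta^t \|u_D^*\|_{H^1(\Omega)}$, as claimed.

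The Neumann assertion follows by the \emph{verbatim} same argument, with $H^1(\Omega)$ in place of $H_0^1(\Omega)$, $\mathcal{E}_N$ in place of $\mathcal{E}_D$, and~\eqref{NH01} in place of~\eqref{DH01}; the hypothesis $c_{\min} > 0$ is precisely what makes $B$ coercive on the whole of $H^1(\Omega)$, so no appeal to Poincar\'e's inequality is needed. I expect no serious obstacle in this proof; the only point that deserves mild care is the upper estimate $\|L\|_{\mathrm{op}}\leq \lambda_{\max}$, where using $\|A(x)\|_{\mathrm{op}}$ rather than $\|A(x)\|_{L^\infty(\Omega)}$ is exactly what keeps $\lambda_{\max}$, and hence $\beta$ and $\alpha_\ast$, independent of the ambient dimension $d$.
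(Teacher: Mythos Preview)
Your proof is correct and follows the same overall skeleton as the paper: expand $\|e_{t+1}\|_{H^1}^2 = \|e_t\|^2 - 2\alpha(\text{cross term}) + \alpha^2(\text{gradient term})$, then bound each piece using coercivity and boundedness of the bilinear form. The paper also proves $\|D_{H_0^1}\mathcal{E}_D(u_t)\|_{H^1} \leq \lambda_{\max}\|e_t\|_{H^1}$ via exactly the Cauchy--Schwarz-in-$L^2$-then-in-$\mathbb{R}^2$ chain you describe (this is their Lemma~\ref{lem:nauAnv}).

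The one noteworthy difference is the cross-term estimate. You go directly through the coercivity $(Lw,w)_{H^1}=B(w,w)\geq \lambda_{\min}\|w\|_{H^1}^2$, which yields the factor $1 - 2\alpha\lambda_{\min} + \alpha^2\lambda_{\max}^2$. The paper instead routes through the energy identity $\mathcal{E}_D(u_D^*) - \mathcal{E}_D(u_t) = -\tfrac{1}{2}B(e_t,e_t)$ but then discards this term using only $\mathcal{E}_D(u_D^*) \leq \mathcal{E}_D(u_t)$, arriving at the weaker factor $1 - \alpha\lambda_{\min} + \alpha^2\lambda_{\max}^2$. So your argument is in fact sharper by a factor of two in the linear coefficient; at the paper's step size $\alpha_\ast = \lambda_{\min}/(2\lambda_{\max}^2)$ your bound gives $\beta^2 = 1 - \tfrac{3\lambda_{\min}^2}{4\lambda_{\max}^2}$ rather than the paper's $\beta_\ast^2 = 1 - \tfrac{\lambda_{\min}^2}{4\lambda_{\max}^2}$. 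This does not match the specific $\beta_\ast$ of Remark~\ref{rmk:para} (so your phrase ``of the form prescribed in Remark~\ref{rmk:para}'' is slightly off), but it certainly establishes the existence of $\beta<1$ as the theorem requires. Framing everything through the Riesz operator $L$ and the clean recurrence $e_{t+1}=(I-\alpha L)e_t$ is a nice organizational choice that the paper does only implicitly.
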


\begin{proof}
We only present the proof for the Dirichlet problem~\eqref{Diri}, as the proof for the Neumann problem~\eqref{Newu} follows analogously.  
Our goal is to establish that there exists a constant \( 0 \leq \beta < 1 \) such that
\[
\| u_{t+1} - u_D^* \|_{H^1(\Omega)} \leq \beta \| u_t - u_D^* \|_{H^1(\Omega)} \quad \text{for all } t \geq 0.
\]

By the construction of the Sobolev gradient descent~\eqref{Diterationscheme}, we compute
\begin{equation}\label{ut+1-u_D*}
    \begin{split}
         \|u_{t+1} - u_D^*\|_{H^1(\Omega)}^2  
=& \|u_t - u_D^* - \alpha \mathcal{G}_D(u_t)\|_{H^1(\Omega)}^2 \\
 =& \|u_t - u_D^*\|_{H^1(\Omega)}^2 
- 2\alpha \left( u_t-u_D^*, \mathcal{G}_D(u_t)\right)_{H^1(\Omega)} 
+ \alpha^2 \| \mathcal{G}_D(u_t) \|_{H^1(\Omega)}^2.
    \end{split}
\end{equation}

\medskip
\noindent \textbf{Estimating \(\bigl(u_t-u_D^*,\mathcal{G}_D(u_t)\bigr)_{H^1(\Omega)}\).}
Let \(e_t\coloneqq u_t-u_D^*\). By~\eqref{DE=0}, we have
\(\mathcal{G}_D(u_D^*)=0\). Hence, using~\eqref{DH01},
\begin{align*}
\bigl(e_t,\mathcal{G}_D(u_t)\bigr)_{H^1(\Omega)}
&=
\bigl(
e_t,
\mathcal{G}_D(u_t)
-
\mathcal{G}_D(u_D^*)
\bigr)_{H^1(\Omega)}
\\
&=
\int_\Omega
\nabla(u_t-u_D^*)\cdot A\nabla e_t
+
c(u_t-u_D^*)e_t\dd x
\\
&=
\int_\Omega
\nabla e_t\cdot A\nabla e_t
+
c|e_t|^2\dd x.
\end{align*}
Letting
$
\lambda_{\min}\coloneqq\min\{a_{\min},c_{\min}\}>0
$,
we obtain
\begin{equation}
\label{eq:u*-ut2G}
\bigl(u_t-u_D^*,\mathcal{G}_D(u_t)\bigr)_{H^1(\Omega)}
\geq
\lambda_{\min}
\int_\Omega
|\nabla e_t|^2+|e_t|^2\dd x
=
\lambda_{\min}
\|u_t-u_D^*\|_{H^1(\Omega)}^2.
\end{equation}

\medskip
\noindent\textbf{Estimating \( \| \mathcal{G}_D(u_t) \|_{H^1(\Omega)}^2 \).}
By~\eqref{DE=0}, we have
\begin{equation}
\label{Est2}
\left( \mathcal{G}_D(u_D^*), \mathcal{G}_D(u_t) \right)_{H^1(\Omega)} = 0 \quad \text{for all } t \geq 0.
\end{equation}
Let $\lambda_{\max} \coloneqq \max\{ a_{\max}, c_{\max} \}$.
Then, by combining~\eqref{Est2}, the definition of the Sobolev gradient in~\eqref{DH01}, and the Cauchy--Schwarz inequality, we obtain
\begin{align*}
\| \mathcal{G}_D(u_t) \|_{H^1(\Omega)}^2 
=& \left( \mathcal{G}_D(u_t), \mathcal{G}_D(u_t) \right)_{H^1(\Omega)} 
- \left( \mathcal{G}_D(u_D^*), \mathcal{G}_D(u_t) \right)_{H^1(\Omega)} \\
=& \int_{\Omega} \nabla (u_t - u_D^*) \cdot A \nabla \mathcal{G}_D(u_t) 
+ c (u_t - u_D^*) \mathcal{G}_D(u_t)  \dd x \\
\leq&\ a_{\max} \| \nabla e_t \|_{L^2(\Omega)} \| \nabla \mathcal{G}_D(u_t) \|_{L^2(\Omega)} 
+ c_{\max} \| e_t \|_{L^2(\Omega)} \| \mathcal{G}_D(u_t) \|_{L^2(\Omega)} \\
\leq&\ \lambda_{\max} \Big( \| \nabla e_t \|_{L^2(\Omega)} \| \nabla \mathcal{G}_D(u_t) \|_{L^2(\Omega)} 
  + \| e_t \|_{L^2(\Omega)} \| \mathcal{G}_D(u_t) \|_{L^2(\Omega)} \Big) \\ 
\leq&\ \lambda_{\max} \left( \| \nabla e_t \|_{L^2(\Omega)}^2 
+ \| e_t \|_{L^2(\Omega)}^2 \right)^{1/2}  \left( \| \nabla \mathcal{G}_D(u_t) \|_{L^2(\Omega)}^2 
+ \| \mathcal{G}_D(u_t) \|_{L^2(\Omega)}^2 \right)^{1/2} \\
=&\ \lambda_{\max} \| e_t \|_{H^1(\Omega)} \| \mathcal{G}_D(u_t) \|_{H^1(\Omega)}.
\end{align*}
This gives
\begin{equation}
\label{eq:Robin u*-ut2}
\| \mathcal{G}_D(u_t) \|_{H^1(\Omega)} 
\leq \lambda_{\max} \| u_t - u_D^* \|_{H^1(\Omega)}.
\end{equation}

Combining the estimates~\eqref{ut+1-u_D*}, \eqref{eq:u*-ut2G}, and~\eqref{eq:Robin u*-ut2}, we obtain
\[
\| u_{t+1} - u_D^* \|_{H^1(\Omega)} 
\leq \left(1 - 2\lambda_{\min} \alpha + \lambda_{\max}^2 \alpha^2\right)^{1/2} \| u_t - u_D^* \|_{H^1(\Omega)}.
\]
Define the contraction factor
\begin{equation}
\label{betadef}
    \beta \colonequals \left(1 - 2\lambda_{\min} \alpha + \lambda_{\max}^2 \alpha^2\right)^{1/2}.
\end{equation}
Then for sufficiently small \( \alpha > 0 \), we can ensure that \( 0 \leq \beta < 1 \), which completes the proof.
\end{proof}

In the subsequent analysis, we use the step size \(\alpha_*\) and the
corresponding contraction factor \(\beta_*\), defined by
\begin{equation}
\label{eq:optimalalpha}
\alpha_{\ast} \coloneqq \frac{ \lambda_{\min} }{  \lambda_{\max}^2 }, \qquad
\beta_{\ast} \coloneqq \left( 1 - \frac{ \lambda_{\min}^2 }{ \lambda_{\max}^2 } \right)^{1/2}.
\end{equation}
This choice of \(\alpha_*\) minimizes the upper bound on the contraction
factor in~\eqref{betadef}.
Moreover, testing the weak formulation~\eqref{uDprop} with
\(v=u_D^*\) and using coercivity and duality, we obtain
\begin{equation}
\label{lfu}
\begin{split}
    \lambda_{\min} \| u_D^* \|_{H^1(\Omega)}^2
& \leq \int_{\Omega} \nabla u_D^* \cdot A \nabla u_D^* \, \mathrm{d}x + \int_{\Omega} c |u_D^*|^2 \, \mathrm{d}x 
\\
& = \int_{\Omega} f u_D^* \, \mathrm{d}x 
\leq \| f \|_{H^{-1}(\Omega)} \| u_D^* \|_{H^1(\Omega)}.
\end{split}
\end{equation}
Consequently,
\begin{equation}
\label{eq:uD*1/lamudamin}
\| u_D^* \|_{H^1(\Omega)} 
\leq \frac{\| f \|_{H^{-1}(\Omega)}}{\lambda_{\min}}.
\end{equation}
Combining Theorem~\ref{thm:uttou*} with the bound in~\eqref{eq:uD*1/lamudamin}, we obtain
\begin{equation}
	\label{setp1D}
	\| u_t - u_D^* \|_{H^1(\Omega)} \leq  \frac{\| f \|_{H^{-1}(\Omega)}}{\lambda_{\min}}\beta_{\ast}^t \quad \text{for all } t \geq 0.
\end{equation}

An analogous estimate holds for the weak solution \( u_N^* \) to the Neumann problem under the Sobolev gradient descent~\eqref{Niterationscheme}:
\begin{equation}
	\label{setp1N}
	\| u_t - u_N^* \|_{H^1(\Omega)} \leq  \frac{\| f \|_{(H^1(\Omega))^*}}{\lambda_{\min}}\beta_{\ast}^t \quad \text{for all } t \geq 0.
\end{equation}
Estimates~\eqref{setp1D} and~\eqref{setp1N} yield the following corollary.
\begin{corollary}
\label{cor:uTepsilon}
Assume that Assumption~\ref{A1} holds, and let \(\alpha_\ast\) and
\(\beta_\ast\) be defined by~\eqref{eq:optimalalpha}. For any
\(\varepsilon>0\), if \(\beta_\ast>0\), choose \(T_1\) and \(T_2\) such that
\begin{equation*}
T_1\geq
\left\lceil
\frac{
\log_+\!\left(
\|f\|_{H^{-1}(\Omega)}/(\varepsilon\lambda_{\min})
\right)}
{|\log\beta_\ast|}
\right\rceil,
\qquad
T_2\geq
\left\lceil
\frac{
\log_+\!\left(
\|f\|_{(H^1(\Omega))^*}/(\varepsilon\lambda_{\min})
\right)}
{|\log\beta_\ast|}
\right\rceil.
\end{equation*}
When \(\beta_\ast=0\), set \(T_1=T_2=1\). Then the iterates \(u_{T_1}\)
and \(u_{T_2}\), generated respectively by
schemes~\eqref{Diterationscheme} and~\eqref{Niterationscheme} with step
size \(\alpha_\ast\), satisfy
\[
\|u_{T_1}-u_D^*\|_{H^1(\Omega)}\leq\varepsilon,
\qquad
\|u_{T_2}-u_N^*\|_{H^1(\Omega)}\leq\varepsilon.
\]
\end{corollary}

\begin{remark}
In the estimate~\eqref{lfu}, we use the \( H^{-1}(\Omega) \)-norm of \( f \) instead of the \( L^2(\Omega) \)-norm, 
since the \( H^{-1} \)-norm is smaller. This follows from the fact that the Poincar\'{e} constant \( C_P \) for the unit hypercube \( \Omega \) is \( 1/(\pi \sqrt{d}) \) (see~Appendix~\ref{sec:poincare}). That is,
\[
\|\phi\|_{L^2(\Omega)} \leq \frac{1}{\pi \sqrt{d}} \|\nabla \phi\|_{L^2(\Omega)}
\quad \text{for all } \phi \in H_0^1(\Omega).
\]
Consequently, for every \( f \in L^2(\Omega) \),
\[
\|f\|_{H^{-1}(\Omega)}
\coloneqq 
\sup\left\{\int_{\Omega} f \phi \,\dd x: \phi\in H_0^1(\Omega),\; \|\nabla \phi\|_{L^2(\Omega)}\leq 1\right\}
\leq \frac{1}{\pi \sqrt{d}} \|f\|_{L^2(\Omega)}.
\]
\end{remark}

\subsection{Step 2: Barron Norm Estimates}
In this section, we focus on proving the following theorem.
\begin{theorem}
\label{thm:baroonnormut+1}
Let \( \alpha_\ast \) be defined as in~\eqref{eq:optimalalpha}, and set
\begin{equation}
	\label{eq:p(d)}
	p(d) \colonequals  \frac{1+\pi}{\pi} \alpha_\ast \ell_{A} d^2 + \alpha_\ast \ell_c + 1.
\end{equation}
\begin{enumerate}[(1)]
	\item Under Assumptions~\ref{A1} and~\ref{A2}, the iterative scheme~\eqref{Diterationscheme} for the Dirichlet problem~\eqref{Diri} satisfies
	\[
	\| u_{t+1} \|_{\mathcal{B}_s^2(\Omega)} 
	\leq p(d) \, \| u_t \|_{\mathcal{B}_s^2(\Omega)} 
	+ \frac{1}{2} \alpha_\ast \ell_{f,D}.
	\]
	
	\item Under Assumptions~\ref{A1} and~\ref{A2'}, the iterative scheme~\eqref{Niterationscheme} for the Neumann problem~\eqref{Newu} satisfies
	\[
	\| u_{t+1} \|_{\mathcal{B}_c^2(\Omega)} 
	\leq p(d) \, \| u_t \|_{\mathcal{B}_c^2(\Omega)} 
	+ \frac{1}{2} \alpha_\ast \ell_{f,N}.
	\]
\end{enumerate}
\end{theorem}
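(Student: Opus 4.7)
The plan is to reformulate the Sobolev gradient update so that the Barron coefficients of $u_{t+1}$ are explicit in terms of those of $u_t$, and then to estimate each product appearing in $\mathcal{L}u_t$ by a coordinate-wise product-to-sum expansion in the appropriate basis.

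I will present the Dirichlet case; the Neumann case is structurally identical. The first key observation is the identification $D_{H_0^1}\mathcal{E}_D(u_t) = (-\Delta+1)^{-1}(\mathcal{L}u_t - f)$ with Dirichlet boundary data, obtained by integration by parts in~\eqref{DH01}. Since the sine eigenfunctions of $-\Delta+1$ on $H_0^1(\Omega)$ satisfy $(-\Delta+1)S_k=(1+\pi^2\|k\|^2)S_k$, writing $u_t=\sum_k a_k S_k$ and $\mathcal{L}u_t-f=\sum_k b_k S_k$ gives
\[
u_{t+1} \;=\; \sum_k\Bigl(a_k - \frac{\alpha_\ast b_k}{1+\pi^2\|k\|^2}\Bigr)S_k,
\]
and the triangle inequality applied inside the weighted sum yields
\[
\|u_{t+1}\|_{\mathcal{B}_s^2(\Omega)} \;\leq\; \|u_t\|_{\mathcal{B}_s^2(\Omega)} + \alpha_\ast\sum_k|b_k| \;=\; \|u_t\|_{\mathcal{B}_s^2(\Omega)} + \tfrac{\alpha_\ast}{2}\,\|\mathcal{L}u_t-f\|_{\mathcal{B}_s^0(\Omega)},
\]
where the last step uses $\|g\|_{\mathcal{B}_s^0(\Omega)}=\sum_k|b_k|(1+\pi^0\|k\|^0)=2\sum_k|b_k|$. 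This step is what produces the factor $\tfrac12$ in front of $\alpha_\ast\ell_{f,D}$; the remaining task is to establish $\|cu_t\|_{\mathcal{B}_s^0(\Omega)}\leq 2\ell_c\|u_t\|_{\mathcal{B}_s^2(\Omega)}$ and $\sum_{i,j=1}^d\|\partial_i(A_{ij}\partial_j u_t)\|_{\mathcal{B}_s^0(\Omega)}\leq\tfrac{2+\pi}{\pi}\ell_{A,D}\,d^2\,\|u_t\|_{\mathcal{B}_s^2(\Omega)}$.

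To bound these products I use the product-to-sum identities $\cos A\cos B=\tfrac12[\cos(A-B)+\cos(A+B)]$, $\sin A\sin B=\tfrac12[\cos(A-B)-\cos(A+B)]$, $\sin A\cos B=\tfrac12[\sin(A+B)+\sin(A-B)]$, applied separately in each of the $d$ coordinates: every pair of multi-indices $(k,\ell)$ produces at most $2^d$ output modes with coefficients $\pm 2^{-d}$, so no factor $2^d$ appears in the final sum. For $cu_t$ with $c=\sum_k\gamma_k C_k$ and $u_t=\sum_\ell\beta_\ell S_\ell$, the product $C_k(x)S_\ell(x)$ lands entirely in the sine basis (every coordinate contains $\cos\cdot\sin$), and using $\sum_k|\gamma_k|\leq\ell_c$ and $\sum_\ell|\beta_\ell|\leq\|u_t\|_{\mathcal{B}_s^2(\Omega)}$ (both from $1\leq 1+\pi^2\|\cdot\|^2$) yields the required bound. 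For $\partial_i(A_{ij}\partial_j u_t)$ the key point is that Assumption~\ref{A2} was designed precisely so that, after tracking the parity coordinate by coordinate, the result lies in the sine basis. In the diagonal case $i=j$, $A_{ii}\in\mathcal{B}_c^1(\Omega)$ gives cosines in every coordinate, $\partial_i u_t$ gives a cosine only at coordinate $i$, so the product has a cosine at coordinate $i$ and sines elsewhere, which the outer $\partial_i$ converts to all sines. In the off-diagonal case $i\neq j$, $A_{ij}\in\mathcal{B}_{i,j}^1(\Omega)$ places sines precisely at coordinates $i$ and $j$, so coordinate $i$ gets $\sin\cdot\sin$ (cosine), coordinate $j$ gets $\sin\cdot\cos$ (sine), and the remaining coordinates get $\cos\cdot\sin$ (sine), which the outer $\partial_i$ again converts to all sines. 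The derivative prefactors $\pi\max(k_i,\ell_i)$ arising from the outer $\partial_i$ and $\pi\ell_j$ from $\partial_j u_t$ are absorbed into the weights of $\|A_{ij}\|_{\mathcal{B}_c^1(\Omega)}$, $\|A_{ij}\|_{\mathcal{B}_{i,j}^1(\Omega)}$ (both bounded by $\ell_{A,D}$) and $\|u_t\|_{\mathcal{B}_s^2(\Omega)}$ via $\sum_k|\alpha_k|\|k\|\leq\ell_{A,D}/\pi$ and $\sum_\ell|\beta_\ell|\|\ell\|^s\leq\|u_t\|_{\mathcal{B}_s^2(\Omega)}/\pi^s$ for $s\in\{1,2\}$ (the latter using $\|\ell\|\geq 1$ in the sine basis); summing over the $d^2$ index pairs produces the $d^2$ scaling.

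The main obstacle is the coordinate-wise parity tracking together with the weight bookkeeping needed to obtain the tight constant $(2+\pi)/(2\pi)$ rather than the naive $(1+\pi)/\pi$ that results from crude splitting, which requires distributing the derivative weight symmetrically between $A$ and $u_t$ using $\max(k_i,\ell_i)\leq k_i+\ell_i$. One also has to confirm that degenerate terms with $\ell_m-k_m=0$ in a sine coordinate (which vanish) cause no trouble in the accounting. The Neumann case follows the same template: the resolvent $(-\Delta+1)^{-1}$ is diagonal on $\{C_k\}_{k\in\mathbb{N}^d}$ in $H^1(\Omega)$ with the same eigenvalues $1+\pi^2\|k\|^2$, and Assumption~\ref{A2'} furnishes the parity structure needed so that each $\partial_i(A_{ij}\partial_j u_t)$ returns to the cosine basis, after which the identical bookkeeping yields the recurrence for $\|u_{t+1}\|_{\mathcal{B}_c^2(\Omega)}$.
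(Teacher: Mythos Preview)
Your proposal is correct and follows essentially the same route as the paper: both identify the Sobolev gradient with the resolvent $(I-\Delta)^{-1}$, diagonalize it on the sine/cosine bases, and control $\mathcal{L}u_t$ by coordinatewise product-to-sum expansions with the same parity bookkeeping dictated by Assumptions~\ref{A2}/\ref{A2'}. The only packaging difference is that the paper applies the Leibniz rule first, splitting $\partial_i(A_{ij}\partial_j u)=(\partial_iA_{ij})(\partial_j u)+A_{ij}\partial_{ij}u$ and bounding each piece separately (Lemma~\ref{lem:Ajterms}), whereas you form $A_{ij}\partial_j u$ first and then apply the outer $\partial_i$, bounding the resulting frequency factor by $k_i+\ell_i$; since $|\pm k_i\pm\ell_i|\le k_i+\ell_i$ is exactly the Leibniz split at the coefficient level, the two computations are algebraically identical and yield the same constant $\tfrac{2+\pi}{2\pi}$.

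One small slip: the factor from the outer $\partial_i$ is $\pi|v_{n,i}|=\pi|k_i\pm\ell_i|$, not $\pi\max(k_i,\ell_i)$ as you write. This does not affect the argument, since you immediately bound it by $k_i+\ell_i$ anyway, but you should correct the wording.
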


\subsubsection{Resolvent Formulation}
For later use, we define the inverse elliptic operator \( (I - \Delta)^{-1} \) under both Dirichlet and Neumann boundary conditions. Given any \( \varphi \in L^2(\Omega) \), we define:
\begin{itemize}
    \item \( (I - \Delta)_D^{-1} \varphi \in H_0^1(\Omega) \) denotes the unique weak solution to
    \[
    (I - \Delta) w = \varphi \quad \text{in } \Omega, \quad
    w = 0 \quad \text{on } \partial \Omega,
    \]
    \item \( (I - \Delta)_N^{-1} \varphi \in H^1(\Omega) \) denotes the unique weak solution to
    \[
    (I - \Delta) w = \varphi \quad \text{in } \Omega, \quad
    \frac{\partial w}{\partial \nu} = 0 \quad \text{on } \partial \Omega.
    \]
\end{itemize}
The existence and uniqueness of the weak solution in both cases are guaranteed by the Lax--Milgram theorem.

According to~\eqref{DH01}, we have
\[
\int_{\Omega} \mathcal{G}_D(u) \, v + \nabla \mathcal{G}_D(u) \cdot \nabla v \, \mathrm{d}x
= \int_{\Omega} \nabla u \cdot A \nabla v + c u v - f v \, \mathrm{d}x, \quad \text{for all }v \in H_0^1(\Omega).
\]
Thus, \( \mathcal{G}_D(u) \in H_0^1(\Omega) \) is the unique weak solution to the elliptic problem
\[
(I - \Delta) w = \mathcal{L} u - f \quad \text{in } \Omega, \quad
w = 0 \quad  \text{on } \partial \Omega,
\]
where
	\[
	\mathcal{L} u \coloneqq -\nabla \cdot (A \nabla u) + c u
	\]
is the second-order elliptic operator.
This implies that the energy gradient for the Dirichlet problem can be expressed as
\[
\mathcal{G}_D(u) = (I - \Delta)_D^{-1}(\mathcal{L} u - f).
\]
Hence, the iterative scheme~\eqref{Diterationscheme} for the Dirichlet problem~\eqref{Diri} becomes
\begin{equation}
\label{-1DE}
u_{t+1} = u_t - \alpha_{\ast} (I - \Delta)^{-1}_D (\mathcal{L} u_t - f).
\end{equation}

Similarly, along the Neumann iteration, the variational
identity~\eqref{NH01} gives
\[
\mathcal{G}_N(u_t)
=
(I-\Delta)_N^{-1}(\mathcal{L}u_t-f).
\]
Consequently, the iterative scheme~\eqref{Niterationscheme} becomes
\begin{equation}
\label{-1NE}
u_{t+1}
=
u_t-\alpha_\ast
(I-\Delta)_N^{-1}(\mathcal{L}u_t-f).
\end{equation}

\subsubsection{Barron Norm Growth}
A key ingredient in our analysis is the spectral property of the inverse elliptic operator.  
The result stated below will be used repeatedly in this section, and its proof follows from a direct computation.
\begin{proposition}
\label{prop:inverse}
For each \( k \in \mathbb{N}^d \), we have
\[
(I - \Delta)_D^{-1} S_k(x) = \frac{1}{1 + \pi^2 |k|^2} S_k(x), \quad
(I - \Delta)_N^{-1} C_k(x) = \frac{1}{1 + \pi^2 |k|^2} C_k(x).
\]
\end{proposition}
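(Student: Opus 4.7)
The plan is to verify both identities by direct computation, exploiting the tensor-product structure of the basis functions and the fact that each one-dimensional factor is an eigenfunction of $\partial_i^2$. Specifically, since $\partial_i^2 \sin(\pi k_i x_i) = -\pi^2 k_i^2 \sin(\pi k_i x_i)$ and $\partial_i^2 \cos(\pi k_i x_i) = -\pi^2 k_i^2 \cos(\pi k_i x_i)$, applying $\Delta$ to either $S_k$ or $C_k$ factors out the eigenvalue $-\pi^2\|k\|^2$. Hence, as classical identities on $\Omega$,
\[
(I-\Delta) S_k = (1+\pi^2\|k\|^2)\, S_k, \qquad (I-\Delta) C_k = (1+\pi^2\|k\|^2)\, C_k.
\]
Since $S_k, C_k \in C^\infty(\overline{\Omega})$, these equalities hold in the weak sense as well after an integration by parts against a test function.

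For the Dirichlet identity, the first step is to check that $S_k \in H_0^1(\Omega)$. This is immediate: each factor $\sin(\pi k_i x_i)$ vanishes at $x_i = 0$ and, because $k_i \in \mathbb{N}$, also at $x_i = 1$, so $S_k$ vanishes on every face of $\partial\Omega$. Combined with the eigenvalue equation above, $S_k$ is the weak solution in $H_0^1(\Omega)$ of $(I-\Delta) w = (1+\pi^2\|k\|^2) S_k$. Uniqueness (Lax--Milgram) and linearity of $(I-\Delta)_D^{-1}$ then yield $(I-\Delta)_D^{-1} S_k = (1+\pi^2\|k\|^2)^{-1}\, S_k$.

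For the Neumann identity, the key step is to verify the homogeneous Neumann boundary condition. Differentiating gives
\[
\partial_i C_k(x) = -\pi k_i \sin(\pi k_i x_i)\prod_{j\ne i}\cos(\pi k_j x_j),
\]
which vanishes whenever $x_i \in \{0,1\}$ (the degenerate case $k_i = 0$ is trivial since then $\partial_i C_k \equiv 0$). Thus $A\nabla C_k \cdot \nu = \partial C_k/\partial\nu = 0$ on $\partial\Omega$ in the sense that the boundary term in integration by parts vanishes, so $C_k \in H^1(\Omega)$ satisfies the weak Neumann problem for $(I-\Delta)w = (1+\pi^2\|k\|^2) C_k$. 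Uniqueness of the weak Neumann solution again gives $(I-\Delta)_N^{-1} C_k = (1+\pi^2\|k\|^2)^{-1}\, C_k$.

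There is no substantive obstacle here; the proof is a clean eigenfunction calculation. The only minor points worth stating explicitly are the boundary vanishing of $\sin(\pi k_i x_i)$ at both endpoints (which requires integer $k_i$, consistent with the indexing $k \in \mathbb{N}^d$) and the disappearance of the boundary term in the Neumann weak formulation, both of which follow by inspection.
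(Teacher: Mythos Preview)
Your proposal is correct and is precisely the ``direct computation'' the paper alludes to: verify that $S_k$ and $C_k$ are smooth eigenfunctions of $I-\Delta$ with eigenvalue $1+\pi^2\|k\|^2$, check the respective boundary conditions, and invoke Lax--Milgram uniqueness. The only cosmetic slip is writing $A\nabla C_k\cdot\nu$ when the operator in question is $(I-\Delta)_N^{-1}$, so $A=I$; you immediately correct this to $\partial C_k/\partial\nu$, so no harm is done.
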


We next collect several algebraic and functional properties of Barron spaces that will be used in the proof of Theorem~\ref{thm:baroonnormut+1}.

\begin{proposition}
\label{prop:linearofBfunc}
Let $\widetilde{\mathcal B}^n(\Omega)$ be any one of the spaces
$\mathcal B_e^n(\Omega)$, $\mathcal B_s^n(\Omega)$,
$\mathcal B_c^n(\Omega)$, and $\mathcal B_{i,j}^n(\Omega)$.
Then the following properties hold.
\begin{enumerate}[(1)]
    \item Linear combination estimate: For all \( g_1, g_2 \in \widetilde{\mathcal{B}}^n(\Omega) \), and \( \lambda_1, \lambda_2 \in \mathbb{R} \),
    \[
    \|\lambda_1 g_1 + \lambda_2 g_2\|_{\widetilde{\mathcal{B}}^n(\Omega)} 
    \leq |\lambda_1| \|g_1\|_{\widetilde{\mathcal{B}}^n(\Omega)} + |\lambda_2| \|g_2\|_{\widetilde{\mathcal{B}}^n(\Omega)}.
    \]

    \item Product estimates:
    \begin{align*}
    \|gh\|_{\mathcal{B}_s^n(\Omega)} 
    &\leq \|g\|_{\mathcal{B}_s^n(\Omega)} \|h\|_{\mathcal{B}_c^n(\Omega)} 
    \quad \text{for } g \in \mathcal{B}_s^n(\Omega), \ h \in \mathcal{B}_c^n(\Omega), \\
    \|gh\|_{\mathcal{B}_c^n(\Omega)} 
    &\leq \|g\|_{\mathcal{B}_c^n(\Omega)} \|h\|_{\mathcal{B}_c^n(\Omega)} 
    \quad \text{for } g, h \in \mathcal{B}_c^n(\Omega).
    \end{align*}

    \item Inverse operator estimates:
    \begin{align*}
    \left\| (I - \Delta)_D^{-1} g \right\|_{\mathcal{B}_s^2(\Omega)} 
    &= \frac{1}{2} \|g\|_{\mathcal{B}_s^0(\Omega)}  
    \quad \text{for } g \in \mathcal{B}_s^0(\Omega), \\
    \left\| (I - \Delta)_N^{-1} g \right\|_{\mathcal{B}_c^2(\Omega)} 
    &= \frac{1}{2} \|g\|_{\mathcal{B}_c^0(\Omega)}  
    \quad \text{for } g \in \mathcal{B}_c^0(\Omega).
    \end{align*}
\end{enumerate}
\end{proposition}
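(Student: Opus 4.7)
The plan is to prove the four claims separately, relying on manipulations of basis expansions from Theorem~\ref{bases} and on the reordering invariance noted in Remark~\ref{reorder}. Each item is essentially mechanical given the definitions, and the only place that demands a little thought is the product rule in (3).

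\textbf{Parts (1) and (2).} For linearity I would pick expansions $g_1 = \sum c_k^{(1)} \phi_k$, $g_2 = \sum c_k^{(2)} \phi_k$ in the relevant basis (taking the defining infimum in the $e$-Barron case), observe that $\lambda_1 g_1 + \lambda_2 g_2 = \sum (\lambda_1 c_k^{(1)} + \lambda_2 c_k^{(2)}) \phi_k$ in $L^2$, and apply $|\lambda_1 c_k^{(1)} + \lambda_2 c_k^{(2)}| \leq |\lambda_1||c_k^{(1)}| + |\lambda_2||c_k^{(2)}|$ term by term. For monotonicity, I note that every $k \in \mathbb{N}^d \setminus \{0\}$ (resp.\ $\omega \in \mathbb{Z}^d \setminus \{0\}$) satisfies $\|k\| \geq 1$, so the numerical inequality $1 + \pi^m \|k\|^m \leq 1 + \pi^n \|k\|^n$ holds termwise for $n \geq m \geq 0$; summing against $|a_k|$ yields the desired norm bound and inclusion.

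\textbf{Part (3).} Using product-to-sum identities componentwise,
\[
C_k(x) C_{k'}(x) = \prod_{i=1}^d \cos(\pi k_i x_i)\cos(\pi k'_i x_i) = \frac{1}{2^d}\sum_{\epsilon \in \{\pm 1\}^d} \prod_{i=1}^d \cos(\pi(k_i + \epsilon_i k'_i) x_i),
\]
so $gh$ expands in the cosine basis after folding negative indices via $\cos(-\theta) = \cos\theta$. For the sine--cosine case I use $\sin\alpha\cos\beta = \tfrac{1}{2}(\sin(\alpha+\beta) + \sin(\alpha-\beta))$ together with $\sin(-\theta) = -\sin\theta$ to produce a sine expansion. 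Applying linearity and the triangle inequality to the Barron norm reduces the claim to the numerical inequality
\[
\frac{1}{2^d}\sum_{\epsilon \in \{\pm 1\}^d}\left(1+\pi^n\|k+\epsilon k'\|^n\right) \leq \left(1+\pi^n\|k\|^n\right)\left(1+\pi^n\|k'\|^n\right).
\]
For the values $n \in \{0,1,2\}$ actually used in the sequel this is elementary: $n=0$ reduces to $2 \leq 4$; $n=1$ follows from $\|k+\epsilon k'\| \leq \|k\| + \|k'\|$; and $n=2$ follows from the polarization identity $\frac{1}{2^d}\sum_\epsilon \|k+\epsilon k'\|^2 = \|k\|^2 + \|k'\|^2$ (the cross terms $2\epsilon_i k_i k'_i$ average to zero), after which the nonnegative term $\pi^{2n}\|k\|^n\|k'\|^n$ on the right comfortably absorbs the remainder.

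\textbf{Part (4).} For $g = \sum_k a_k S_k \in \mathcal{B}_s^0(\Omega)$, Proposition~\ref{prop:inverse} gives a termwise action of $(I-\Delta)_D^{-1}$, yielding
\[
(I-\Delta)_D^{-1} g = \sum_{k} \frac{a_k}{1+\pi^2\|k\|^2}\, S_k.
\]
Computing the $\mathcal{B}_s^2$-norm, the weight $1+\pi^2\|k\|^2$ cancels the eigenvalue exactly, producing $\sum_k |a_k|$; comparing with $\|g\|_{\mathcal{B}_s^0(\Omega)} = 2\sum_k |a_k|$ (since $1+\pi^0\|k\|^0 = 2$) gives the factor $\tfrac{1}{2}$. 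The Neumann case is identical with $C_k$ in place of $S_k$. The only point worth checking carefully is that the proposed series converges in $H_0^1(\Omega)$ (resp.\ $H^1(\Omega)$) and solves the same weak equation, so that uniqueness from Lax--Milgram identifies it with $(I-\Delta)^{-1}g$; absolute summability of $|a_k|$ makes this straightforward. The only genuinely nontrivial step in the whole proposition is the weight inequality for $n=2$ in part (3); once the polarization identity is spotted, everything else is bookkeeping.
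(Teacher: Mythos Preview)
Your approach matches the paper's in all four parts: (1) and (2) are dismissed as immediate from the definitions, (3) is handled by product-to-sum identities followed by a weight inequality, and (4) is the eigenvalue computation from Proposition~\ref{prop:inverse} together with a justification that the series can be treated termwise.

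The one place where you diverge is the weight inequality in (3). You restrict to $n\in\{0,1,2\}$ and invoke a polarization identity for $n=2$, but the proposition is stated for arbitrary $n\ge 0$, so as written your argument leaves the general case open. The paper instead bounds each of the $2^d$ terms individually via
\[
1+\pi^n\|v_m\|^n \le 1+\pi^n(\|k\|+\|k'\|)^n \le (1+\pi^n\|k\|^n)(1+\pi^n\|k'\|^n),
\]
which holds for every $n$ because nonzero indices satisfy $\pi\|k\|,\pi\|k'\|\ge\pi>2$, so $\pi^2\|k\|\|k'\|>\|k\|+\|k'\|$ and hence $(\pi\|k\|\cdot\pi\|k'\|)^n$ alone dominates $(\pi\|k\|+\pi\|k'\|)^n$; the case where one index is zero is trivial. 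This single termwise estimate replaces your case analysis and closes the gap. Your polarization trick for $n=2$ is a pleasant alternative, but it does not generalize, and the paper's route is both simpler and covers the full statement.
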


\begin{proof}
Statement (1) follows directly from the definitions.

\medskip
\noindent{(2).}
We prove only the first inequality, as the second one is analogous.  
By the standard uniform convergence argument in analysis,  
the interchange of summation when taking products in the following proofs  
is justified, and the details will be omitted.

Let
\[
g(x) = \sum_{k \in \mathbb{N}^d} a_{k}  S_k(x), \quad
h(x) = \sum_{k' \in \mathbb{N}^d} b_{k'}  C_{k'}(x).
\]
We compute
\begin{align*}
g(x)h(x)
&= \sum_{k, k' \in \mathbb{N}^d} a_k b_{k'} \prod_{j=1}^d \sin\left(\pi k_j x_j\right) \cos\left(\pi k_j' x_j\right) \\
&= \sum_{k, k' \in \mathbb{N}^d} a_k b_{k'} \cdot \frac{1}{2^d} \prod_{j=1}^d \left( \sin\left(\pi(k_j + k_j')x_j\right) + \sin\left(\pi(k_j - k_j')x_j\right) \right) \\
&
=\sum_{k, k' \in \mathbb{N}^d}\sum_{\epsilon\in\{\pm 1\}^d}   \frac{1}{2^d} a_k b_{k'} S_{k+\epsilon\circ k'}(x).
\end{align*}
We estimate
\[
1 + \pi^n |k+\epsilon\circ k'|^n
\leq 1 + \pi^n \left(|k|+|k'|\right)^n
 \leq \left(1 + \pi^n |k|^n\right)\left(1 + \pi^n |k'|^n\right).
\]
Thus, we obtain
\begin{align*}
\|gh\|_{\mathcal{B}_s^n(\Omega)} 
&= \left\| \sum_{k, k' \in \mathbb{N}^d}\sum_{\epsilon\in\{\pm 1\}^d} \frac{1}{2^d} a_k b_{k'}  S_{k+\epsilon\circ k'}(x) \right\|_{\mathcal{B}_s^n(\Omega)} \\
&\leq   \sum_{k, k' \in \mathbb{N}^d} \sum_{\epsilon\in\{\pm 1\}^d}\frac{1}{2^d}|a_kb_{k'}|   \left(1 + \pi^n |k+\epsilon\circ k'|^n\right) \\
&\leq \sum_{k, k' \in \mathbb{N}^d} |a_k| |b_{k'}|  \left(1 + \pi^n |k|^n\right)\left(1 + \pi^n |k'|^n\right) \\
&= \|g\|_{\mathcal{B}_s^n(\Omega)}  \|h\|_{\mathcal{B}_c^n(\Omega)}.
\end{align*}

\medskip
\noindent{(3).}
We prove only the first identity, as the second follows analogously.  
Some standard arguments imply that $(I - \Delta)_D^{-1}$ and the summation are interchangeable, i.e.,
\[
(I - \Delta)_D^{-1} g = \sum_{k \in \mathbb{N}^d} a_k (I - \Delta)_D^{-1} S_k(x).
\]
It now follows from Proposition~\ref{prop:inverse} and the definition of the \(s\)-Barron norm that
\[
\left\| (I - \Delta)_D^{-1} g \right\|_{\mathcal{B}_s^2(\Omega)} 
= \left\| \sum_{k \in \mathbb{N}^d} a_k \frac{1}{1 + \pi^2 |k|^2}  S_k(x) \right\|_{\mathcal{B}_s^2(\Omega)} = \sum_{k \in \mathbb{N}^d} |a_k|
=\frac{1}{2}\|g\|_{\mathcal{B}_s^0(\Omega)}.
\]
\end{proof}

\begin{lemma}
\label{lem:Ajterms}
Suppose the condition on \( A(x) \) in Assumption~\ref{A2} holds, and let \( u \in \mathcal{B}_s^2(\Omega) \). Then for every \( i, j = 1, \dots, d \), we have
\begin{align}
\left\| (I - \Delta)^{-1}_D (\partial_i A_{ij} \partial_j u) \right\|_{\mathcal{B}_s^2(\Omega)}
&\leq \frac{1}{\pi} \ell_{A} \|u\|_{\mathcal{B}_s^2(\Omega)}, \label{pApu} \\
\left\| (I - \Delta)^{-1}_D (A_{ij} \partial_{ij} u) \right\|_{\mathcal{B}_s^2(\Omega)}
&\leq \ell_{A} \|u\|_{\mathcal{B}_s^2(\Omega)}. \label{Appu}
\end{align}
Under Assumption~\ref{A2'} and for \( u \in \mathcal{B}_c^2(\Omega) \), the same estimates hold with \( (I - \Delta)^{-1}_N \), \( \ell_{A} \), and \( \mathcal{B}_c^2(\Omega) \).
\end{lemma}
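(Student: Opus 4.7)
The plan is to reduce each of the two estimates to bounding the $\mathcal{B}_s^0(\Omega)$-norm of the function inside $(I-\Delta)_D^{-1}$, and then to apply Proposition~\ref{prop:linearofBfunc}(4), which gives $\|(I-\Delta)_D^{-1}g\|_{\mathcal{B}_s^2(\Omega)}=\tfrac{1}{2}\|g\|_{\mathcal{B}_s^0(\Omega)}$ for any $s$-Barron $g$. The Neumann assertions then follow by the symmetric argument with sines and cosines exchanged and the companion identity for $(I-\Delta)_N^{-1}$.

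Expand $u=\sum_{k\in\mathbb{N}^d}a_k S_k$, and, in view of Assumption~\ref{A2}, expand $A_{ii}=\sum_{\tilde k}b_{\tilde k}C_{\tilde k}$ when $i=j$ and $A_{ij}=\sum_{\tilde k}b_{\tilde k}M_{\tilde k,i,j}$ when $i\neq j$. The $\mathcal{B}_s^2$-summability of $u$ legitimizes all termwise differentiations and product rearrangements by the same Weierstrass $M$-test argument used in the proof of Proposition~\ref{prop:linearofBfunc}(4). The first step is a structural parity check: I multiply each expansion coordinate by coordinate via the sum-to-product identities for $\sin\alpha\cos\beta$, $\cos\alpha\sin\beta$, $\sin\alpha\sin\beta$, and $\cos\alpha\cos\beta$, and verify in each of the four sub-cases ($i=j$ or $i\neq j$, combined with each of the two products) that the output at every coordinate is a sine combination, so that $\partial_iA_{ij}\partial_j u$ and $A_{ij}\partial_{ij}u$ land in $\mathcal{B}_s^0(\Omega)$. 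The role of the mixed Barron space $\mathcal{B}_{i,j}^1$ for off-diagonal entries is precisely to make this parity work out: the sines placed at positions $i,j$ in $M_{\tilde k,i,j}$ cancel the parity shifts introduced by differentiation at those coordinates.

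Once the correct basis is identified, the quantitative bound is elementary. After coordinate expansion, each $(k,\tilde k)$-pair contributes a sum of $2^d$ sign-adjusted sine basis functions whose prefactor $\tfrac{1}{2^d}$ cancels the $2^d$ sign count when one takes absolute values. The derivative factors contribute $\pi|\tilde k_i|$ and $\pi|k_j|$ for the first estimate and $\pi^2|k_ik_j|$ (or $\pi^2 k_i^2$ when $i=j$) for the second, and the elementary bounds $\pi|\tilde k_i|\le 1+\pi\|\tilde k\|$, $2\pi\|k\|\le 1+\pi^2\|k\|^2$, and $2|k_ik_j|\le \|k\|^2$ convert the resulting double sums into $\ell_{A,D}\,\|u\|_{\mathcal{B}_s^2(\Omega)}$ times an absolute constant. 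Applying the $\tfrac{1}{2}$-gain from Proposition~\ref{prop:linearofBfunc}(4) then produces the two stated bounds. The main obstacle is the first step, i.e.\ the disciplined combinatorial bookkeeping of sine/cosine ``parity'' at each of the $d$ coordinates across the chain of differentiation, multiplication, and further differentiation; once the products are shown to land in the right basis, the numerical constants follow from routine weighted-sum inequalities.
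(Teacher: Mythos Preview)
Your approach is essentially the same as the paper's: expand termwise, check that every coordinate factor in the product becomes a sine via the sum-to-product identities (which is exactly why the mixed basis $M_{\tilde k,i,j}$ is used for off-diagonal entries), then bound the resulting weighted double sums. The only packaging difference is that the paper applies $(I-\Delta)_D^{-1}$ term by term via Proposition~\ref{prop:inverse} and computes the $\mathcal{B}_s^2$-norm directly (the weight $(1+\pi^2\|v_n\|^2)$ cancels the eigenvalue factor), whereas you invoke Proposition~\ref{prop:linearofBfunc}(4) on the $\mathcal{B}_s^0$-norm; these are equivalent line by line.

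One small constant to repair for \eqref{pApu}: your inequality $2\pi\|k\|\le 1+\pi^2\|k\|^2$ yields the bound $\tfrac{1}{2}\ell_{A,D}\|u\|_{\mathcal{B}_s^2}$ rather than the stated $\tfrac{1}{\pi}\ell_{A,D}\|u\|_{\mathcal{B}_s^2}$. To match the paper's constant, use instead that the sine-basis index satisfies $\|k\|\ge 1$, hence $|k_j|\le\|k\|\le\|k\|^2$, which gives $\pi|k_j|\le\tfrac{1}{\pi}(1+\pi^2\|k\|^2)$. Combined with $\pi|\tilde k_i|\le 1+\pi\|\tilde k\|$ this recovers the factor $\tfrac{1}{\pi}$. (The same observation works in the Neumann/cosine case since any term with $k_j=0$ vanishes.) Your treatment of \eqref{Appu} already gives the correct constant $\tfrac{1}{2}$.
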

\begin{proof}
We prove the case under Assumption~\ref{A2}; the case under Assumption~\ref{A2'} follows by the same argument.
By the standard uniform convergence argument in analysis,  
the interchange of the order of summation and differentiation,  
as well as the interchange of summation when taking products in the following proofs,  
is justified.  
We will omit the details when applying these arguments.

\medskip
\noindent\textbf{Proof of~\eqref{pApu} in the case \( i = j \).}
Let
\[
A_{ii}(x) = \sum_{k \in \mathbb{N}^d} a_k C_k(x), \quad 
u(x) = \sum_{k' \in \mathbb{N}^d} b_{k'} S_{k'}(x).
\]
We compute
\begin{align*}
\partial_i A_{ii} \partial_i u 
=& \sum_{k, k' \in \mathbb{N}^d} a_k b_{k'} \left( \partial_i C_k  \partial_i S_{k'} \right) \\
=&   \sum_{k, k' \in \mathbb{N}^d}-\pi^2 a_k b_{k'} k_i k_i' 
\left( \prod_{\substack{1 \leq m \leq d \\ m \neq i}} \cos\left( \pi k_m x_m \right) \sin\left( \pi k_m' x_m \right) \right)\cdot \sin\left( \pi k_i x_i \right) \cos\left( \pi k_i' x_i \right) \\
=& \sum_{k, k' \in \mathbb{N}^d} -\pi^2a_k b_{k'} k_i k_i' \cdot \frac{1}{2^d} 
\prod_{\substack{1 \leq m \leq d \\ m \neq i}} \left( \sin\left( \pi(k_m + k_m') x_m \right) - \sin\left( \pi(k_m - k_m') x_m \right) \right)   \\
&\qquad \cdot \left( \sin\left( \pi(k_i + k_i') x_i \right) + \sin\left( \pi(k_i - k_i') x_i \right) \right) \\
=&  \sum_{k, k' \in \mathbb{N}^d}\sum_{\epsilon\in\{\pm 1\}^d}  -\frac{\pi^2 }{2^d}  a_k b_{k'} k_i k_i' \epsilon_i\prod_{m=1}^{d}\epsilon_m \cdot S_{k+\epsilon\circ k' }(x),
\end{align*}
Thus, we obtain
\begin{align*}
&\|(I - \Delta)^{-1}_D(\partial_i A_{ii}  \partial_i u)\|_{\mathcal{B}_s^2(\Omega)} \\
=& \left\|  \sum_{k, k' \in \mathbb{N}^d}\sum_{\epsilon\in\{\pm 1\}^d}  -\frac{\pi^2 }{2^d}  a_k b_{k'} k_i k_i' \epsilon_i\prod_{m=1}^{d}\epsilon_m \cdot  (I - \Delta)^{-1}_D S_{k+\epsilon\circ k'}(x) \right\|_{\mathcal{B}_s^2(\Omega)} \\
=& \left\|  \sum_{k, k' \in \mathbb{N}^d}\sum_{\epsilon\in\{\pm 1\}^d}  -\frac{\pi^2 }{2^d}  a_k b_{k'} k_i k_i' \epsilon_i\prod_{m=1}^{d}\epsilon_m \cdot  \frac{1}{1 + \pi^2 |k+\epsilon\circ k'|^2} S_{k+\epsilon\circ k'}(x) \right\|_{\mathcal{B}_s^2(\Omega)} \\
\leq& \sum_{k, k' \in \mathbb{N}^d} \pi^2 |a_kb_{k'}|  k_ik_i' 
\leq  \sum_{k, k' \in \mathbb{N}^d} \pi^2 |a_k| |b_{k'}| |k| |k'|^2 \\
\leq& \frac{1}{\pi} \|A_{ii}\|_{\mathcal{B}_c^1(\Omega)} \|u\|_{\mathcal{B}_s^2(\Omega)} \leq \frac{1}{\pi}\ell_{A} \|u\|_{\mathcal{B}_s^2(\Omega)}.
\end{align*}

\medskip
\noindent\textbf{Proof of~\eqref{pApu} in the case \( i \neq j \).}
Suppose
\[
A_{ij}(x)=\sum_{k \in \mathbb{N}^d} a_k M_{k,i,j}(x), \quad 
u(x) = \sum_{k' \in \mathbb{N}^d} b_{k'} S_{k'}(x).
\]
We compute
\begin{align*}
\partial_i A_{ij}  \partial_j u 
=&  \sum_{k, k' \in \mathbb{N}^d} \pi^2a_k b_{k'} k_i k_j' 
\left( \prod_{\substack{1 \leq m \leq d \\ m \neq i,j}} \cos\left( \pi k_m x_m \right) \sin\left( \pi k_m' x_m \right) \right) \\
&\qquad \cdot  \cos\left( \pi k_i x_i \right) \sin\left( \pi k_i' x_i \right) 
\sin\left( \pi k_j x_j \right) \cos\left( \pi k_j' x_j \right) \\
=& \sum_{k, k' \in \mathbb{N}^d} \pi^2 a_k b_{k'} k_i k_j' \cdot \frac{1}{2^d} 
\prod_{\substack{1 \leq m \leq d \\ m \neq j}} \left( \sin\left( \pi(k_m + k_m') x_m \right) - \sin\left( \pi(k_m - k_m') x_m \right) \right) \\
&\qquad \cdot \left( \sin\left( \pi(k_j + k_j') x_j \right) + \sin\left( \pi(k_j - k_j') x_j \right) \right) \\
=&  \sum_{k, k' \in \mathbb{N}^d}\sum_{\epsilon\in\{\pm 1\}^d} \frac{\pi^2}{2^d}a_k b_{k'} k_i k_j' 
\epsilon_j\prod_{m=1}^{d}\epsilon_m \cdot S_{k+\epsilon\circ k' }(x),
\end{align*}
Thus, we obtain
\begin{align*}
&\|(I - \Delta)^{-1}_D(\partial_{i}A_{ij} \partial_{j} u)\|_{\mathcal{B}_s^2(\Omega)}\\
=& \left\|  \sum_{k, k' \in \mathbb{N}^d}\sum_{\epsilon\in\{\pm 1\}^d} \frac{\pi^2}{2^d}a_k b_{k'} k_i k_j' 
\epsilon_j\prod_{m=1}^{d}\epsilon_m \cdot  (I - \Delta)^{-1}_D S_{k+\epsilon\circ k'}(x) \right\|_{\mathcal{B}_s^2(\Omega)} \\
=& \left\| \sum_{k, k' \in \mathbb{N}^d}\sum_{\epsilon\in\{\pm 1\}^d} \frac{\pi^2}{2^d}a_k b_{k'} k_i k_j' 
\epsilon_j\prod_{m=1}^{d}\epsilon_m \cdot  \frac{1}{1 + \pi^2 |k+\epsilon\circ k'|^2} S_{k+\epsilon\circ k'}(x) \right\|_{\mathcal{B}_s^2(\Omega)} \\
\leq&  \sum_{k, k' \in \mathbb{N}^d} \pi^2 |a_kb_{k'}|  k_i k_j' 
\leq  \sum_{k, k' \in \mathbb{N}^d} \pi^2 |a_k| |b_{k'}| |k| |k'|^2 \\
\leq& \frac{1}{\pi} \|A_{ij}\|_{\mathcal{B}_{i,j}^1(\Omega)} \|u\|_{\mathcal{B}_s^2(\Omega)}\leq \frac{1}{\pi}\ell_{A} \|u\|_{\mathcal{B}_s^2(\Omega)}.
\end{align*}

\medskip
\noindent\textbf{Proof of~\eqref{Appu} in the case \( i = j \).}
Let
\[
A_{ii}(x) = \sum_{k \in \mathbb{N}^d} a_k C_k(x), \quad 
u(x) = \sum_{k' \in \mathbb{N}^d} b_{k'} S_{k'}(x).
\]
We compute
\begin{align*}
A_{ii} \partial_{ii} u 
=&  \sum_{k, k' \in \mathbb{N}^d} a_k b_{k'} \left( C_k \partial_{ii} S_{k'} \right) \\
=&  \sum_{k, k' \in \mathbb{N}^d} -\pi^2a_k b_{k'} k_i'^2 \prod_{m=1}^d \cos\left( \pi k_m x_m \right) \sin\left( \pi k_m' x_m \right) \\
=&  \sum_{k, k' \in \mathbb{N}^d} -\pi^2 a_k b_{k'} k_i'^2 \cdot \frac{1}{2^d} \prod_{m=1}^d \left( \sin\left( \pi(k_m + k_m') x_m \right) - \sin\left( \pi(k_m - k_m') x_m \right) \right) \\
=&  \sum_{k, k' \in \mathbb{N}^d}\sum_{\epsilon\in\{\pm 1\}^d}- \frac{\pi^2}{2^d}a_k b_{k'} k_i'^2  \prod_{m=1}^{d}\epsilon_m \cdot S_{k+\epsilon\circ k' }(x).
\end{align*}
Thus, we obtain
\begin{align*}
&\|(I - \Delta)^{-1}_D(A_{ii} \partial_{ii} u)\|_{\mathcal{B}_s^2(\Omega)} \\
=& \left\| \sum_{k, k' \in \mathbb{N}^d}\sum_{\epsilon\in\{\pm 1\}^d}- \frac{\pi^2}{2^d}a_k b_{k'} k_i'^2  \prod_{m=1}^{d}\epsilon_m \cdot   (I - \Delta)^{-1}_D S_{k+\epsilon\circ k'} \right\|_{\mathcal{B}_s^2(\Omega)} \\
=& \left\| \sum_{k, k' \in \mathbb{N}^d}\sum_{\epsilon\in\{\pm 1\}^d}- \frac{\pi^2}{2^d}a_k b_{k'} k_i'^2  \prod_{m=1}^{d}\epsilon_m \cdot  \frac{1}{1 + \pi^2 |k+\epsilon\circ k'|^2} S_{k+\epsilon\circ k'} \right\|_{\mathcal{B}_s^2(\Omega)} \\
\leq&  \sum_{k, k' \in \mathbb{N}^d} \pi^2 |a_kb_{k'}|  k_i'^2 \leq 
 \sum_{k, k' \in \mathbb{N}^d} \pi^2 |a_k| |b_{k'}| |k'|^2 \\
\leq& \|A_{ii}\|_{\mathcal{B}_c^1(\Omega)} \|u\|_{\mathcal{B}_s^2(\Omega)}\leq \ell_{A} \|u\|_{\mathcal{B}_s^2(\Omega)}.
\end{align*}

\medskip
\noindent\textbf{Proof of~\eqref{Appu} in the case \( i \neq j \).}
Suppose
\[
A_{ij}(x)=\sum_{k \in \mathbb{N}^d} a_k M_{k,i,j}(x), \quad 
u(x) = \sum_{k' \in \mathbb{N}^d} b_{k'} S_{k'}(x).
\]
We compute
\begin{align*}
&A_{ij}  \partial_{ij} u \\
=&   \sum_{k, k' \in \mathbb{N}^d} \pi^2a_k b_{k'} k_i' k_j' 
\left( \prod_{\substack{1 \leq m \leq d \\ m \neq i,j}} \cos\left( \pi k_m x_m \right) \sin\left( \pi k_m' x_m \right) \right) \\
&\qquad \cdot  \sin\left( \pi k_i x_i \right) \cos\left( \pi k_i' x_i \right) 
\sin\left( \pi k_j x_j \right) \cos\left( \pi k_j' x_j \right) \\
=&  \sum_{k, k' \in \mathbb{N}^d}\pi^2 a_k b_{k'} k_i' k_j' \cdot \frac{1}{2^d} 
\prod_{\substack{1 \leq m \leq d \\ m \neq i,j}} \left( \sin\left( \pi(k_m + k_m') x_m \right) - \sin\left( \pi(k_m - k_m') x_m \right) \right) \\
&\qquad \cdot  \left( \sin( \pi(k_i + k_i') x_i ) + \sin( \pi(k_i - k_i') x_i ) \right) \left( \sin( \pi(k_j + k_j') x_j) + \sin( \pi(k_j - k_j') x_j ) \right)   \\
=&   \sum_{k, k' \in \mathbb{N}^d}\sum_{\epsilon\in\{\pm 1\}^d} \frac{\pi^2}{2^d} a_k b_{k'} k_i' k_j'  
\epsilon_i\epsilon_j\prod_{m=1}^{d}\epsilon_m \cdot S_{k+\epsilon\circ k' }(x),
\end{align*}
Thus, we obtain
\begin{align*}
&\|(I - \Delta)^{-1}_D(A_{ij} \partial_{ij} u)\|_{\mathcal{B}_s^2(\Omega)} \\
=& \left\| \sum_{k, k' \in \mathbb{N}^d}\sum_{\epsilon\in\{\pm 1\}^d} \frac{\pi^2}{2^d} a_k b_{k'} k_i' k_j'  
\epsilon_i\epsilon_j\prod_{m=1}^{d}\epsilon_m \cdot (I - \Delta)^{-1}_D S_{k+\epsilon\circ k'} \right\|_{\mathcal{B}_s^2(\Omega)} \\
=& \left\| \sum_{k, k' \in \mathbb{N}^d}\sum_{\epsilon\in\{\pm 1\}^d} \frac{\pi^2}{2^d} a_k b_{k'} k_i' k_j'  
\epsilon_i\epsilon_j\prod_{m=1}^{d}\epsilon_m \cdot \frac{1}{1 + \pi^2 |k+\epsilon\circ k'|^2} S_{k+\epsilon\circ k'} \right\|_{\mathcal{B}_s^2(\Omega)} \\
\leq& \sum_{k, k' \in \mathbb{N}^d} \pi^2 |a_kb_{k'}|  k_i' k_j' \leq 
\sum_{k, k' \in \mathbb{N}^d} \pi^2 |a_k| |b_{k'}| |k'|^2 \\
\leq& \|A_{ij}\|_{\mathcal{B}_{i,j}^1(\Omega)} \|u\|_{\mathcal{B}_s^2(\Omega)}\leq \ell_{A} \|u\|_{\mathcal{B}_s^2(\Omega)}.
\end{align*}
\end{proof}

\begin{lemma}
\label{lem:cut}
\hfill
\begin{enumerate}[(1)]
	\item Suppose the condition on \( c(x) \) in Assumption~\ref{A2} holds, and let \( u \in \mathcal{B}_s^2(\Omega) \).  Then we have
	\[\|(I-\Delta)^{-1}_D(cu)\|_{\CB_s^2(\Omega)}\leq \ell_{c}\|u\|_{\CB_s^2(\Omega)}.\]
	\item Suppose the condition on \( c(x) \) in Assumption~\ref{A2'} holds, and let \( u\in \CB_c^2(\Omega) \). Then we have
	\[\|(I-\Delta)^{-1}_N(cu)\|_{\CB_c^2(\Omega)}\leq \ell_{c}\|u\|_{\CB_c^2(\Omega)}.\]
\end{enumerate}
\end{lemma}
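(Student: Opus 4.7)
The plan is to reduce both statements to a chain of three previously established estimates from Proposition~\ref{prop:linearofBfunc}, in the order: product bound, monotonicity, and inverse-operator identity. No new Fourier-series computations are needed, because all of the tedious basis manipulations were already carried out in the proofs of Proposition~\ref{prop:linearofBfunc} and Lemma~\ref{lem:Ajterms}.

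For part (1), I first note that $c \in \CB_c^2(\Omega)$ and $u \in \CB_s^2(\Omega)$ by hypothesis, so Proposition~\ref{prop:linearofBfunc}(3) (the product estimate for an $s$-Barron function times a $c$-Barron function) applies with $n = 2$ and yields
\[
\|cu\|_{\CB_s^2(\Omega)} \leq \|u\|_{\CB_s^2(\Omega)}\,\|c\|_{\CB_c^2(\Omega)} = \ell_c \,\|u\|_{\CB_s^2(\Omega)}.
\]
In particular $cu \in \CB_s^2(\Omega) \subseteq \CB_s^0(\Omega)$, and monotonicity of Barron norms (Proposition~\ref{prop:linearofBfunc}(2)) gives $\|cu\|_{\CB_s^0(\Omega)} \leq \|cu\|_{\CB_s^2(\Omega)}$. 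Finally, applying the inverse-operator identity of Proposition~\ref{prop:linearofBfunc}(4) to $g = cu \in \CB_s^0(\Omega)$, I obtain
\[
\bigl\|(I-\Delta)^{-1}_D(cu)\bigr\|_{\CB_s^2(\Omega)}
= \tfrac{1}{2}\|cu\|_{\CB_s^0(\Omega)}
\leq \tfrac{1}{2}\|cu\|_{\CB_s^2(\Omega)}
\leq \tfrac{1}{2}\ell_c\,\|u\|_{\CB_s^2(\Omega)}
\leq \ell_c \,\|u\|_{\CB_s^2(\Omega)},
\]
which is the desired inequality. (In fact this argument gives the sharper constant $\tfrac{1}{2}\ell_c$, but the stated bound suffices for the downstream estimate of $p_D(d)$.)

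For part (2), I repeat the identical three-step scheme, now using the $c$-Barron product estimate from Proposition~\ref{prop:linearofBfunc}(3), namely $\|cu\|_{\CB_c^2(\Omega)} \leq \|c\|_{\CB_c^2(\Omega)}\|u\|_{\CB_c^2(\Omega)} = \ell_c\|u\|_{\CB_c^2(\Omega)}$ for $c,u \in \CB_c^2(\Omega)$; monotonicity to drop the weight from $n=2$ to $n=0$; and the second identity of Proposition~\ref{prop:linearofBfunc}(4), $\|(I-\Delta)^{-1}_N g\|_{\CB_c^2(\Omega)} = \tfrac{1}{2}\|g\|_{\CB_c^0(\Omega)}$, applied to $g = cu$. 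Chaining these yields $\|(I-\Delta)^{-1}_N(cu)\|_{\CB_c^2(\Omega)} \leq \ell_c\|u\|_{\CB_c^2(\Omega)}$.

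There is no serious obstacle: the only verification required is that the hypotheses of Proposition~\ref{prop:linearofBfunc}(3)--(4) are met, and the assumptions on $c$ and $u$ are tailored precisely so that they are. The bookkeeping cost -- carefully matching sine-times-cosine products with the $s$-Barron space, versus cosine-times-cosine products with the $c$-Barron space -- is entirely absorbed by the product estimates of Proposition~\ref{prop:linearofBfunc}, so the proof amounts to applying those three results in sequence.
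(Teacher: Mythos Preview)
Your proposal is correct and follows essentially the same route as the paper: both proofs chain Proposition~\ref{prop:linearofBfunc}(3), (2), and (4) to bound \(\|(I-\Delta)^{-1}_D(cu)\|_{\CB_s^2(\Omega)}\) by \(\ell_c\|u\|_{\CB_s^2(\Omega)}\), with part~(2) treated analogously. Your write-up is in fact slightly more explicit---you spell out the monotonicity step and observe the sharper constant \(\tfrac{1}{2}\ell_c\)---whereas the paper absorbs the monotonicity into a single inequality \(\|(I-\Delta)^{-1}_D(cu)\|_{\CB_s^2(\Omega)} \leq \|cu\|_{\CB_s^2(\Omega)}\).
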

\begin{proof}
We provide the proof of (1), as the proof of (2) follows by the same argument.
By part (2) of Proposition~\ref{prop:linearofBfunc}, we have \( cu \in \mathcal{B}_s^2(\Omega) \).  
It then follows from part (3) of the same proposition that
\[
\|(I - \Delta)^{-1}_D(cu)\|_{\mathcal{B}_s^2(\Omega)} \leq \|cu\|_{\mathcal{B}_s^2(\Omega)}.
\]
Applying part (2) again yields the desired result.
\end{proof}

\begin{proof}[\textbf{Proof of Theorem~\ref{thm:baroonnormut+1}}]
Note that
\[
\mathcal{L} u - f=-\nabla \cdot (A \nabla u) + c u - f
= - \sum_{i, j = 1}^{d} \partial_i A_{ij} \partial_j u
- \sum_{i, j = 1}^{d} A_{ij} \partial_{ij} u
+ c u - f.
\]
The result follows from the linearity of \( (I - \Delta)^{-1}_D \) and \( (I - \Delta)^{-1}_N \), together with~\eqref{-1DE}, \eqref{-1NE}, Proposition~\ref{prop:linearofBfunc}, and Lemmas~\ref{lem:Ajterms} and~\ref{lem:cut}.
\end{proof}

\subsection{Step 3: Neural Network Approximation}
\label{sec3.3}
The following approximation theorem for Barron functions by neural networks is analogous to \cite[Theorem 4]{E}, and our proof strategy is a refinement of their argument. To the best of our knowledge, this remains the standard approach in the literature; similar strategies are employed in the approximation results for Barron functions in~\cite{Chen21, CLLZ23, Feng25}.
\begin{theorem}
\label{thm:nueraltobarron}
Let \(g \in \mathcal{B}_e^2(\Omega)\) be an \(e\)-Barron function.
Then, for any fixed positive integer \(k\), there exists a two-layer neural network of the form
\[
N_k(x)
=
\frac{1}{k}\sum_{i=1}^k
a_i\cos\bigl(w_i^\top x+b_i\bigr),
\]
whose parameters and \(e\)-Barron norm satisfy
\begin{equation}
\label{eq:Nkgbarron}
|a_i|\leq \|g\|_{\mathcal{B}_e^2(\Omega)},\quad
w_i\in \pi\BZ^d,\quad
\frac{1}{k} \sum_{i=1}^k |a_i|\bigl(1+|w_i|^2\bigr)
\leq \|g\|_{\mathcal{B}_e^2(\Omega)},\quad
\|N_k\|_{\mathcal{B}_e^2(\Omega)}
\leq \|g\|_{\mathcal{B}_e^2(\Omega)},
\end{equation}
for \(i=1,\ldots,k\), and such that
\begin{equation}
\label{eq:nerultobarron}
\left\|N_k-g\right\|_{H^1(\Omega)}
\leq
\frac{\|g\|_{\mathcal{B}_e^2(\Omega)}}{\sqrt{k}}.
\end{equation}
\end{theorem}
\begin{proof}
The case \(g=0\) is trivial, so we assume \(g\neq 0\). Fix
\(\varepsilon>0\). By the definition of the \(e\)-Barron norm, there
exists an expansion
\begin{equation}
	\label{gReg}
	g(x) = \sum_{\omega \in \mathbb{Z}^d} c_{\omega} e^{i \pi \omega^{\top} x}
\end{equation}
such that
\begin{equation}
	\label{gepsilon}
\sum_{\omega \in \mathbb{Z}^d} |c_{\omega}| \left(1 + \pi^2 |\omega|^2\right) 
\leq \|g\|_{\mathcal{B}_e^2(\Omega)} + \varepsilon.
\end{equation}
Since \(g\) is real-valued, writing
\(c_\omega=|c_\omega|e^{i\theta_\omega}\) and taking real
parts in~\eqref{gReg} yield
\begin{equation}
\label{g=cw}
g(x)=
\sum_{\omega\in\mathbb{Z}^d}
|c_\omega|
\cos\left(\pi\omega^\top x+\theta_\omega\right).
\end{equation}

By \(g\neq 0\) and~\eqref{gepsilon}, we have
\[
0< Z \coloneqq  \sum_{\omega \in \mathbb{Z}^d} |c_{\omega}| \left(1 + \pi^2 |\omega|^2\right)
\leq \|g\|_{\mathcal{B}_e^2(\Omega)}+\varepsilon
 < \infty.
\]
Thus, the measure
\begin{equation}
	\label{eq:mudef}
\mu(a,w,b) \colonequals \sum_{\omega \in \mathbb{Z}^d} \frac{|c_{\omega}|\left(1 + \pi^2 |\omega|^2\right)}{Z}  \delta_{(Z/(1 + \pi^2 |\omega|^2) , \pi \omega, \theta_{\omega})}
\end{equation}
defines a probability measure on \(\mathbb{R} \times \mathbb{R}^d \times \mathbb{R}\),  
where \(\delta_{(Z/(1 + \pi^2 |\omega|^2) , \pi \omega, \theta_{\omega})}\) denotes the Dirac measure on \(\mathbb{R} \times \mathbb{R}^d \times \mathbb{R}\).
We equip the space \((\mathbb{R} \times \mathbb{R}^d \times \mathbb{R})^k\), endowed with its Borel \(\sigma\)-algebra, with the product probability measure \({\mu(a,w,b)}^{\otimes k}\).
Let
\[
\Theta \colonequals \left\{ (a_i, w_i, b_i) \right\}_{1 \leq i \leq k} \in (\mathbb{R} \times \mathbb{R}^d \times \mathbb{R} )^k,
\]
and define the pointwise approximation error between the neural network and the target function \( g \) by
\[
R(\Theta; x) \colonequals \frac{1}{k} \sum_{i=1}^k a_i  \sigma\left( w_i^{\top} x + b_i \right) - g(x),
\]
which is a random variable on the probability space \(\left((\mathbb{R} \times \mathbb{R}^d \times \mathbb{R})^k, {\mu(a,w,b)}^{\otimes k}\right)\).  Then to prove the theorem, it suffices to establish that
\begin{equation}
\label{eq:ETheta}
\mathbb{E}_{\mu^{\otimes k}} \left\| R(\Theta; x) \right\|^2_{H^1(\Omega)} \leq \frac{\|g\|_{\mathcal{B}_e^2(\Omega)}^2}{k}.
\end{equation}

Observe that the expected \( H^1 \)-error decomposes as
\begin{equation}
    \label{deexp}
\mathbb{E}_{\mu^{\otimes k}} \left\| R(\Theta; x) \right\|^2_{H^1(\Omega)}
=\mathbb{E}_{\mu^{\otimes k}} \left\| R(\Theta; x) \right\|^2_{L^2(\Omega)}+\mathbb{E}_{\mu^{\otimes k}}\| \nabla R(\Theta; x) \|_{L^2(\Omega)}^2.
\end{equation}
We proceed to estimate the two terms on the right-hand side separately.

\medskip
\noindent\textbf{Expectation of $\| R(\Theta; x) \|_{L^2(\Omega)}^2$.}
Since we assume that \( \sigma \) is the cosine function, it follows from~\eqref{eq:mudef} and~\eqref{g=cw} that
\begin{equation}
\label{eq.Expg}
\mathbb{E}_{\mu(a,w,b)} \left[ a \sigma(w^{\top} x + b) \right]=
\int_{\mathbb{R} \times \mathbb{R}^d \times \mathbb{R}} a \cos\left(  w^{\top} x + b \right)  \dd \mu(a, w, b) 
= g(x).  
\end{equation}

It follows from \eqref{eq.Expg} that for every \(1 \leq i \neq j \leq k\), we have
\begin{equation}
	\label{expRij}
\int_{(\mathbb{R} \times \mathbb{R}^d \times \mathbb{R})^k}
\left( a_i \sigma(w_i^{\top} x + b_i) - g(x) \right)
\left( a_j \sigma(w_j^{\top} x + b_j) - g(x) \right)
 \dd {\mu}^{\otimes k} = 0.
\end{equation}
Thus, combining~\eqref{expRij} and~\eqref{eq.Expg}, we compute that
\begin{align*}
 \mathbb{E}_{\mu^{\otimes k}} \left\| R(\Theta; x) \right\|^2_{L^2(\Omega)}
=& \int_{(\mathbb{R} \times \mathbb{R}^d \times \mathbb{R})^k}
\int_{\Omega} \left( \frac{1}{k} \sum_{i=1}^k a_i \sigma(w_i^{\top} x + b_i) - g(x) \right)^2 \dd x  \mathrm{d}{\mu}^{\otimes k} \\
=& \frac{1}{k^2} \int_{\Omega} \int_{(\mathbb{R} \times \mathbb{R}^d \times \mathbb{R})^k}
\left( \sum_{i=1}^k \left(a_i  \sigma(w_i^{\top} x + b_i) -  g(x)\right) \right)^2 \dd {\mu}^{\otimes k}  \mathrm{d}x \\
=& \frac{1}{k^2} \int_{\Omega} \sum_{i=1}^k 
\int_{(\mathbb{R} \times \mathbb{R}^d \times \mathbb{R})^k} 
\left( a_i \sigma(w_i^{\top} x + b_i) - g(x) \right)^2 \dd {\mu}^{\otimes k}  \mathrm{d}x \\
=& \frac{1}{k} \int_{\Omega} \int_{\mathbb{R} \times \mathbb{R}^d \times \mathbb{R}} 
\left( a \sigma(w^{\top} x + b) - g(x) \right)^2 \dd \mu  \mathrm{d}x\\
=& \frac{1}{k} \int_{\Omega} \operatorname{Var}_{\mu(a, w, b)} \left( a \sigma(w^{\top} x + b) \right) \dd x.
\end{align*}
Since \(\sigma\) is the cosine function, we have
\[
\operatorname{Var}_{\mu(a, w, b)} \left( a \sigma(w^{\top} x + b) \right)\leq 
\mathbb{E}_{\mu(a, w, b)} \left[ \left(a \sigma(w^{\top} x + b)\right)^2 \right] 
\leq \mathbb{E}_{\mu(a, w, b)} \left[ a^2 \right].
\]
Thus,
\begin{equation}
\label{eq:expED}
\mathbb{E}_{\mu^{\otimes k}} \left\| R(\Theta; x) \right\|^2_{L^2(\Omega)} 
\leq \frac{m(\Omega)}{k}\mathbb{E}_{\mu(a, w, b)} \left[ a^2 \right]
=\frac{1}{k}\mathbb{E}_{\mu(a, w, b)} \left[ a^2 \right],
\end{equation}
where \( m(\Omega) \) denotes the Lebesgue measure of \( \Omega \), which equals \(1\) since \( \Omega = (0,1)^d \).

\medskip
\noindent\textbf{Expectation of $\| \nabla R(\Theta; x) \|_{L^2(\Omega)}^2$.}
Let \(\langle w_i, e_j \rangle\) denote the standard Euclidean inner product of \(w_i\) and \(e_j\) in \(\mathbb{R}^d\); that is, \(\langle w_i, e_j \rangle\) is the \(j\)-th component of \(w_i\in \mathbb{R}^d\).
Differentiating both sides of~\eqref{eq.Expg} with respect to \( x_j \),  
it is straightforward to verify that for every \(1 \leq i \neq \ell \leq k\) and for each \(1 \leq j \leq d\), we have
\begin{align*}
&\int_{(\mathbb{R} \times \mathbb{R}^d \times \mathbb{R})^k}
\left( a_i\langle w_i,e_j\rangle \sigma'(w_i^{\top} x + b_i) - \partial_j g(x) \right)
\\
& \qquad\qquad\qquad\qquad\cdot\left( a_{\ell}\langle w_{\ell},e_j\rangle \sigma'(w_{\ell}^{\top} x + b_{\ell}) - \partial_j g(x) \right)
 \dd \mu^{\otimes k} = 0,
\end{align*}
and
\begin{equation*}
    \mathbb{E}_{(a, w, b)} \left[ a \left\langle w, e_j\right\rangle\sigma'(w^{\top} x + b) \right]=\partial_j g(x).
\end{equation*}
Thus, for every \(1 \leq j \leq d\), we have
\begin{align*}
	&\mathbb{E}_{\mu^{\otimes k}} \left\| \partial_j R(\Theta; x) \right\|^2_{L^2(\Omega)}  \\
	=&\int_{\left(\mathbb{R} \times \mathbb{R}^d \times \mathbb{R}\right)^k} \int_{\Omega}\left(\partial_j\left(\frac{1}{k} \sum_{i=1}^k a_i \sigma\left(w_i^{\top} x+b_i\right)\right)-\partial_j g(x)\right)^2 \dd x \mathrm{d} \mu^{\otimes k}\\
	=&\frac{1}{k^2} \int_{\Omega}\sum_{i=1}^k \int_{\left(\mathbb{R} \times \mathbb{R}^d \times \mathbb{R}\right)^k} \left(a_i\left\langle w_i, e_j\right\rangle \sigma^{\prime}\left(w_i^{\top} x+b_i\right)-\partial_j g(x)\right)^2 \dd  \mu^{\otimes k} \mathrm{d}x\\
	=&\frac{1}{k} \int_{\Omega} \int_{\mathbb{R} \times \mathbb{R}^d \times \mathbb{R}}\left(a\left\langle w, e_j\right\rangle \sigma^{\prime}\left(w^{\top} x+b\right)-\partial_j g(x)\right)^2 \dd \mu(a, w, b) \mathrm{d}x\\
	=&\frac{1}{k} \int_{\Omega} \operatorname{Var}_{\mu(a, w, b)} \left(a\left\langle w, e_j\right\rangle \sigma^{\prime}\left(w^{\top} x+b\right)\right) \dd x\\
	\leq& \frac{1}{k} \int_{\Omega}\mathbb{E}_{\mu(a, w, b)} \left[\left( a\left\langle w, e_j\right\rangle \sigma^{\prime}\left(w^{\top} x+b\right)\right)^2 \right] \dd x.
\end{align*}
Since \(\sigma\) is the cosine function, we have	
\begin{equation}
\label{eq:grad-L2-bound}
\begin{split}
    & \mathbb{E}_{\mu^{\otimes k}}\| \nabla R(\Theta; x) \|_{L^2(\Omega)}^2=
	\mathbb{E}_{\mu^{\otimes k}}\sum_{j=1}^{d} \left\| \partial_j R(\Theta; x) \right\|^2_{L^2(\Omega)}  \\
	\leq& \frac{1}{k} \int_{\Omega}\sum_{j=1}^{d}\mathbb{E}_{\mu(a, w, b)} \left[\left( a\left\langle w, e_j\right\rangle \sigma^{\prime}\left(w^{\top} x+b\right)\right)^2 \right] \dd x  \\
	\leq& \frac{1}{k} \int_{\Omega}\sum_{j=1}^{d}\mathbb{E}_{\mu(a, w, b)} \left[\left( a\left\langle w, e_j\right\rangle \right)^2\right] \dd x  \\
	=&  \frac{1}{k}\mathbb{E}_{\mu(a, w, b)} \left[ a^2|w|^2 \right].
\end{split}
\end{equation}

Combining~\eqref{deexp}, \eqref{eq:expED}, and~\eqref{eq:grad-L2-bound}, we obtain
\[
\mathbb{E}_{\mu^{\otimes k}} \left\| R(\Theta; x) \right\|^2_{H^1(\Omega)}
\leq \frac{1}{k} \mathbb{E}_{\mu(a, w, b)}\left[a^2(1+|w|^2) \right] 
= \frac{Z}{k} \left( \sum_{\omega \in \mathbb{Z}^d} |c_{\omega}| \right) 
\leq \frac{\left(\|g\|_{\mathcal{B}_e^2(\Omega)}+\varepsilon\right)^2}{k}.
\]
Letting \(\varepsilon \to 0\) yields the desired estimate~\eqref{eq:ETheta}.
Moreover, estimate~\eqref{eq:Nkgbarron} follows from the representation~\eqref{g=cw} in terms of exponential functions and the definition~\eqref{eq:mudef}.
This completes the proof of the theorem.
\end{proof}

\begin{proof}[\textbf{Proof of Theorem~\ref{mainthm}}]
The conclusion follows by combining Proposition~\ref{prop:barron},
Corollary~\ref{cor:uTepsilon}, Theorem~\ref{thm:baroonnormut+1}, Proposition~\ref{prop:linearofBfunc}, and Theorem~\ref{thm:nueraltobarron}.
\end{proof}

In order to prove the approximation result for the \(\operatorname{ReLU}\) activation,
we first state Theorem~\ref{thmllw21}, a modified version of~\cite[Theorem~17]{LuLuWang21}
adapted to our definition of the Barron norm.
The Barron norm in~\cite{LuLuWang21} is defined similarly to our \(c\)-Barron norm,
except that it is based on the \(\ell^1\)-norm \(\|k\|_1\) rather than the Euclidean norm
\(|k|\).
Consequently, our norm is weaker and gives better dimension dependence.
Nevertheless, the proofs of Lemma~18 and Proposition~19 in~\cite{LuLuWang21}, which are
the key ingredients in the proof of Theorem~17 therein, can be adapted to our setting
with minor modifications. For completeness, we provide the full proof in
Appendix~\ref{genllw21}.

\begin{theorem}
\label{thmllw21}
Suppose \(g \in \CB_s^2(\Omega)\) is an \( s \)-Barron function of weight 2. Then, for any fixed positive integer \( k \), there exist a constant \( \gamma \in \mathbb{R} \) with \( |\gamma| \leq 2\|g\|_{\CB_s^2(\Omega)} \), and a collection of parameters
\[
\left\{ (a_i, w_i, b_i) \in \mathbb{R} \times \mathbb{R}^d \times \mathbb{R} \right\}_{1 \leq i \leq k}
\]
satisfying
\[
\sum_{i=1}^k |a_i| \leq 4\sqrt{d}\,\|g\|_{\CB_s^2(\Omega)}, 
\quad |w_i| = 1, \quad |b_i| \leq \sqrt{d}, \quad \text{for all } 1 \leq i \leq k,
\]
such that
\[
\left\|
\gamma+\sum_{i=1}^{k} a_i \operatorname{ReLU}(w_i^{\top} x + b_i) - g(x)
\right\|_{H^1(\Omega)}
\leq 
\frac{\sqrt{q(d)}  \|g\|_{\CB_s^2(\Omega)}}{\sqrt{k}},
\]
where \( q(d) = 64d^2 + 48d + 4 \). 
An analogous result also holds if \( \CB_s^2(\Omega) \) is replaced by \( \CB_c^2(\Omega) \).
\end{theorem}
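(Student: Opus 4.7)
The plan is to adapt the strategy of~\cite{LuLuWang21} to our Euclidean-norm based Barron spaces. By Proposition~\ref{prop:barron}, any $g \in \CB_s^2(\Omega)$ (resp.\ $\CB_c^2(\Omega)$) admits an exponential-basis expansion satisfying $\|g\|_{\CB_e^2(\Omega)} \leq \|g\|_{\CB_s^2(\Omega)}$, so it suffices to approximate each complex ridge $x \mapsto e^{i\pi\omega^{\top}x}$, $\omega \in \mathbb{Z}^d$, by a $\operatorname{ReLU}$ network with parameter bounds scaling explicitly in $\|\omega\|$.

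The key reduction is one-dimensional. Writing $s = \omega^{\top} x / \|\omega\|$, on $\Omega = (0,1)^d$ the Cauchy--Schwarz inequality gives $|s| \leq \sqrt{d}$, and $e^{i\pi \omega^{\top}x}$ becomes the univariate function $\phi(s) = e^{i\pi\|\omega\| s}$. Taylor's theorem with integral remainder yields the $\operatorname{ReLU}$ representation
\[
\phi(s) = \phi(-\sqrt{d}) + (s+\sqrt{d})\,\phi'(-\sqrt{d}) + \int_{-\sqrt{d}}^{\sqrt{d}} \operatorname{ReLU}(s-b)\,\phi''(b)\,\mathrm{d}b,
\]
with $\|\phi''\|_{L^\infty}\leq \pi^2\|\omega\|^2$. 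The affine part can be rewritten as a constant plus two $\operatorname{ReLU}$ units via $s = \operatorname{ReLU}(s) - \operatorname{ReLU}(-s)$. Taking real parts and summing over $\omega$ expresses $g$ as a constant $c$ plus a signed superposition of ridge terms $\operatorname{ReLU}(w^{\top}x+b)$ with $\|w\|=1$ and $|b|\leq \sqrt{d}$, where the total variation of the underlying weight measure is controlled by the $\CB_e^2$-norm.

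Normalizing this signed measure to a probability measure $\mu$ on $\mathbb{R}\times S^{d-1}\times[-\sqrt{d},\sqrt{d}]$, I would then run the same Maurey-style argument as in the proof of Theorem~\ref{thm:nueraltobarron}: draw $k$ i.i.d.\ samples $(a_i,w_i,b_i)\sim\mu$, and bound the expected squared $H^1$-error by $\tfrac{1}{k}$ times the single-sample variance. The desired parameter bounds $|c|\leq 2\|g\|_{\CB_s^2(\Omega)}$, $\sum_i|a_i|\leq 8\sqrt{d}\,\|g\|_{\CB_s^2(\Omega)}$, $\|w_i\|=1$, $|b_i|\leq\sqrt{d}$ are built into the compact support and total mass of $\mu$; the factor $8\sqrt{d}$ reflects the radius $\sqrt{d}$ of the domain entering through the affine-part absorption.

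The main obstacle is the gradient estimate. For each $\operatorname{ReLU}$ unit, $\nabla\operatorname{ReLU}(w^{\top}x+b)=w\,\mathbf{1}_{\{w^{\top}x+b>0\}}$ has magnitude one but splits across $d$ coordinates, and summing the variance of each component over $d$ directions introduces an extra factor of $d$; combined with the $a^2$ term, whose amplitude scales like $\sqrt{d}\,\|g\|_{\CB_s^2(\Omega)}$ after the affine absorption, this accounts for the $256 d^2$ leading term in $q(d)$. The two steps requiring the most care are (i)~propagating the correct numerical constants through the affine absorption and the real/imaginary decomposition of $\phi$ to obtain the explicit constants $2$, $8\sqrt{d}$, and $256,128,4$, and (ii)~using the weight $1+\pi^2\|\omega\|^2$ in the $\CB_e^2$-norm to absorb the $\pi^2\|\omega\|^2$ factor coming from $\phi''$, so that the per-sample variance of the gradient remains uniformly bounded in $\omega$.
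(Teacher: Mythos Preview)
Your approach is sound and would yield a result of the same form, but the paper takes a different route at the one-dimensional step. Rather than Taylor's integral remainder, the paper approximates each rescaled ridge $z\mapsto\tfrac{A}{1+\pi^2\|k\|^2}\operatorname{sc}(\pi\|k\|z)$ on $I_d=[-\sqrt{d},\sqrt{d}]$ by an explicit piecewise linear interpolant on $2m$ nodes (Lemma~\ref{LLWlem18}), using the existence of a critical point $\alpha$ of the integrand to sharpen the bound on the two ``central'' slope coefficients; the interpolant is then written as a finite $\operatorname{ReLU}$ combination with coefficient $\ell^1$-norm exactly $\le 8\sqrt{d}B$. Letting $m\to\infty$ places each ridge in the $H^1$-closure of $\operatorname{conv}(\SF_{\operatorname{ReLU}}(B))$ (Lemma~\ref{LLWprop19}), after which the deterministic Maurey lemma (Lemma~\ref{LLWlem16}) is applied, together with a direct $H^1(\Omega)$ bound on each $\operatorname{ReLU}$ atom, to read off $q(d)$. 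The paper also works throughout with the real ridges $\operatorname{sc}(\pi\tilde{k}^\top x)$ obtained from the product-to-sum identity for $S_k$ (or $C_k$), rather than detouring through $\CB_e^2$ and complex exponentials via Proposition~\ref{prop:barron}.

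Your integral-remainder route is conceptually more direct (no limit in $m$, a single continuous measure over $\operatorname{ReLU}$ units) and is common elsewhere in the Barron-space literature. What the paper's discrete-interpolation approach buys is that the stated constants $8\sqrt{d}$ and $q(d)=256d^2+128d+4$ fall out cleanly from the second-difference structure $a_i=(g(z_{i-1})-2g(z_i)+g(z_{i+1}))/h$ and the critical-point trick. Your scheme would produce constants of the same order in $d$, but recovering the identical numerical values would require extra work---tracking the affine part $(s+\sqrt{d})\phi'(-\sqrt{d})$ together with $\int_{I_d}|\phi''|$ does not obviously yield $8\sqrt{d}$---which you already flag as the step requiring the most care.
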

\begin{proof}[\textbf{Proof of Theorem~\ref{mainthm2}}]
The result follows by combining Corollary~\ref{cor:uTepsilon}, Theorem~\ref{thm:baroonnormut+1}, Proposition~\ref{prop:linearofBfunc}, and Theorem~\ref{thmllw21}.
\end{proof}

\section{Conclusion and Future Directions}
In this work, we have established quantitative Barron-space approximation results for second-order elliptic PDEs with homogeneous Dirichlet and Neumann boundary conditions on the unit hypercube. By combining an exponentially convergent Sobolev gradient descent scheme, recursive estimates for the Barron norms of its iterates, and quantitative approximation results for two-layer cosine and ReLU networks, we have shown that the weak solutions can be approximated in the \(H^1(\Omega)\)-norm without incurring the curse of dimensionality, under suitable structural assumptions on the coefficients and the forcing term. Our analysis also provides explicit bounds on the network width and parameters.

Several natural questions remain open. First, the Barron spaces used in this paper are defined through Fourier expansions, and the proof relies essentially on the compatibility between this Fourier structure and the Sobolev gradient. In particular, differentiation, multiplication, and the action of the resolvent \((I-\Delta)^{-1}\) can be controlled explicitly in the corresponding spectral Barron norms. It would therefore be interesting to determine whether the techniques developed here can be extended to PDEs whose coefficients belong only to representational Barron spaces, that is, Barron spaces defined directly through neural-network representations. Such an extension does not appear to be straightforward, since these spaces may not possess the same stability properties under differential operators, multiplication, and elliptic resolvents. Developing an appropriate calculus for representational Barron functions could provide a route toward addressing this question.

Another important open problem is to extend the present framework from the unit hypercube to general bounded domains. The sine and cosine bases used throughout our analysis are closely adapted to both the geometry of the hypercube and its homogeneous boundary conditions. On a general bounded domain, this explicit Fourier structure is no longer available, and the geometry of the boundary must be incorporated into both the Barron-space estimates and the neural-network approximation. One possible direction is to use the eigenfunctions of the Dirichlet Laplacian to define a domain-adapted Barron space, in analogy with~\eqref{eigen}. However, establishing analogues of the neural-network approximation results in Theorems~\ref{thm:nueraltobarron} and~\ref{thmllw21} remains an open problem.

\section*{Acknowledgments}
The work of Z. Chen is supported in part by the National Science Foundation via grant DMS-2509011. Z. Chen thanks Jianfeng Lu and Yulong Lu for helpful discussions. L. Huang gratefully acknowledges  Padmavathi Srinivasan for her continued encouragement and patient guidance during the first two years of graduate study. He also thanks Wenkui Du, Lev Fedorov, Mengxuan Yang, and Shengxuan Zhou for many valuable discussions. We used ChatGPT for minor language editing; all technical content was written and verified by the authors.

\appendix

\section{Omitted Proofs in Sections~\ref{sec:2} and~\ref{sec:proofsofmain}}
\label{sec:A}

\subsection{Proof of Theorem~\ref{bases}}
\label{pfbases}
We denote the domain \( (-1,1)^d \subseteq \mathbb{R}^d \) by \( \widetilde{\Omega} \).

\medskip
\noindent{(1).}
According to~\cite[Theorem~8.20]{Folland99}, the collection in~(1) forms a complete orthogonal system in \(L_{\mathbb{C}}^2(\widetilde{\Omega})\), the space of complex-valued square-integrable functions on \(\widetilde{\Omega}\). Therefore, every function in \( L^2(\Omega) \) admits an \( L^2 \)-expansion in family~(1) with coefficients in \(\mathbb{C}\) because it admits an extension in \( L_{\mathbb{C}}^2(\widetilde{\Omega}) \). Moreover, the extension is not unique, which implies that the \( L^2 \)-expansion is also not unique.

\medskip
\noindent{(2).}
Since the collection in~(1) forms a complete orthogonal system in
\(L_{\mathbb{C}}^2(\widetilde{\Omega})\), and each exponential function
\(e^{i\pi\omega^{\top}x}\) can be expressed as a finite
\(\mathbb{C}\)-linear combination of products of sine and cosine functions,
the collection
\begin{equation}
    \label{SC}
    \left\{ \prod_{i=1}^{d} \operatorname{sc}(\pi k_i x_i) : k=(k_1,\ldots,k_d) \in \mathbb{N}^d,\ x=(x_1,\ldots,x_d) \in \widetilde{\Omega} \right\},
\end{equation}
where \(\operatorname{sc}(\pi k_i x_i)\) denotes either
\(\sin(\pi k_i x_i)\) or \(\cos(\pi k_i x_i)\), with only the cosine choice
allowed when \(k_i=0\), forms a complete orthogonal system in
\(L_{\mathbb{C}}^2(\widetilde{\Omega})\).

Given a function \( g \in L^2(\Omega) \), we define its coordinatewise odd extension by
\[
\widetilde{g}(x) \coloneqq 
\begin{cases}
\operatorname{sgn}(x_1) \cdots \operatorname{sgn}(x_d) g(|x_1|, \dots, |x_d|), & \text{if } x_m \neq 0 \text{ for all } m = 1, \dots, d, \\
0, & \text{otherwise},
\end{cases}
\]
where \(\operatorname{sgn}(t)=1\) for \(t>0\), \(\operatorname{sgn}(t)=0\) for \(t=0\), and \(\operatorname{sgn}(t)=-1\) for \(t<0\).
Since \( \widetilde{g} \) is odd in each coordinate, its expansion in \eqref{SC} contains only sine terms:
\[
\widetilde{g}(x) = \sum_{k \in \mathbb{N}^d} a_k S_k(x) \quad \text{in } L^2(\widetilde{\Omega}).
\]
Restricting this expansion to \( \Omega \) yields a representation of \( g \) in terms of the family~(2). The uniqueness of the expansion is ensured by the orthogonality of distinct functions in this system, and the coefficients are real since 
\[
a_k =2^d \int_{\Omega} g(x) S_k(x) \,\mathrm{d}x \in \mathbb{R}.
\]

\medskip
\noindent{(3).} The proof follows the same argument as in (2) by considering the coordinatewise even extension of \( g \in L^2(\Omega) \), say
\[
\widetilde{g}(x) \coloneqq 
\begin{cases}
g(|x_1|, \dots, |x_d|), & \text{if } x_m \neq 0 \text{ for all } m = 1, \dots, d, \\
0, & \text{otherwise}.
\end{cases}
\]

\medskip
\noindent{(4).} The proof follows the same argument as in (2) by considering an extension of \( g \in L^2(\Omega) \) that is odd in the \( i \)-th and \( j \)-th coordinates and even in the remaining ones, say
\[
\widetilde{g}(x) \coloneqq 
\begin{cases}
\operatorname{sgn}(x_i) \operatorname{sgn}(x_j)g(|x_1|, \dots, |x_d|), & \text{if } x_m \neq 0 \text{ for all } m = 1, \dots, d, \\
0, & \text{otherwise}.
\end{cases}
\]

\subsection{The Poincar\'{e} Constant for the Unit Hypercube}
\label{sec:poincare}
\begin{theorem}
The Poincar\'{e} constant \( C_P \) for the unit hypercube \( \Omega \) is \( 1/\pi \sqrt{d} \); that is,
\[
C_P = \frac{1}{\pi \sqrt{d}}
\]
is the optimal constant such that 
\[
\|\phi\|_{L^2(\Omega)} \leq C_P \|\nabla \phi\|_{L^2(\Omega)}
\quad \text{for all } \phi \in H_0^1(\Omega).
\]
\end{theorem}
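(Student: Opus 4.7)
The plan is to prove both the inequality and optimality via a sine-basis expansion, exploiting the fact that the Dirichlet Laplacian on $\Omega$ is diagonalized by the orthogonal basis of family~(2) in Theorem~\ref{bases}.

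First I would expand an arbitrary $\phi \in H_0^1(\Omega)$ in the sine basis
$\phi(x) = \sum_{k} a_k S_k(x)$, where the sum ranges over $k \in \mathbb{N}^d$ with $k_i \geq 1$ for all $i$. Since $\{S_k\}$ is an orthogonal basis of $L^2(\Omega)$ (with $\|S_k\|_{L^2(\Omega)}^2 = 2^{-d}$), Parseval's identity yields
\[
\|\phi\|_{L^2(\Omega)}^2 = \frac{1}{2^d}\sum_{k} |a_k|^2.
\]
Next I would differentiate termwise: for each coordinate $i$,
\[
\partial_i \phi(x) = \sum_{k} a_k\, \pi k_i \cos(\pi k_i x_i) \prod_{j\neq i} \sin(\pi k_j x_j),
\]
and the functions appearing on the right are pairwise orthogonal in $L^2(\Omega)$ (distinct multi-indices produce distinct tensor products of cosines and sines). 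Summing over $i$ and applying Parseval again gives
\[
\|\nabla \phi\|_{L^2(\Omega)}^2 = \frac{1}{2^d}\sum_{k} |a_k|^2\, \pi^2 \|k\|^2.
\]
The standard justification that these expansions are valid for every $\phi \in H_0^1(\Omega)$ is that $\{2^{d/2}S_k\}$ is an orthonormal basis of eigenfunctions of the Dirichlet Laplacian with eigenvalues $\pi^2\|k\|^2$, so the expansion converges in the $H^1$ topology.

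Now the key observation is that each $k_i$ is a positive integer, so $\|k\|^2 = k_1^2 + \cdots + k_d^2 \geq d$, with equality precisely when $k = (1,\ldots,1)$. Consequently
\[
\|\nabla \phi\|_{L^2(\Omega)}^2 \geq \pi^2 d\cdot \frac{1}{2^d}\sum_{k} |a_k|^2 = \pi^2 d\, \|\phi\|_{L^2(\Omega)}^2,
\]
which yields $\|\phi\|_{L^2(\Omega)} \leq (\pi\sqrt{d})^{-1}\|\nabla \phi\|_{L^2(\Omega)}$. For optimality, I would exhibit the extremizer $\phi_\ast(x) = \prod_{i=1}^d \sin(\pi x_i) = S_{(1,\ldots,1)}(x)$, which belongs to $H_0^1(\Omega)$ and for which $\|\nabla \phi_\ast\|_{L^2(\Omega)}^2 = \pi^2 d\, \|\phi_\ast\|_{L^2(\Omega)}^2$ by direct computation, showing that $C_P = 1/(\pi\sqrt d)$ cannot be improved.

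The only subtle point is justifying the sine expansion and termwise differentiation for an arbitrary element of $H_0^1(\Omega)$, rather than for smooth compactly supported test functions. I would handle this by invoking the spectral theorem for the self-adjoint Dirichlet Laplacian on $\Omega$: the $\{2^{d/2}S_k\}$ are its eigenfunctions and form a complete orthonormal system in $L^2(\Omega)$, so the quadratic form domain $H_0^1(\Omega)$ is characterized by $\sum_k |a_k|^2(1+\pi^2\|k\|^2) < \infty$, and the identities above hold unconditionally on this domain. Everything else is a routine calculation.
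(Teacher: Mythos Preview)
Your proof is correct and takes essentially the same approach as the paper: both arguments rest on the spectral decomposition of the Dirichlet Laplacian on $\Omega$ by the sine basis $\{S_k\}$ with eigenvalues $\pi^2\|k\|^2$, and both identify the minimum $\pi^2 d$ attained at $k=(1,\dots,1)$. The paper phrases this via the Rayleigh-quotient characterization of the first eigenvalue and cites references for the eigenfunctions, whereas you unpack the same content directly through Parseval's identity; the underlying idea is identical.
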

\begin{proof}
It is easy to see that
\[
\frac{1}{C_P}
= \inf_{\substack{\phi \in H_0^1(\Omega) \\ \phi \neq 0}}
\frac{\|\nabla \phi\|_{L^2(\Omega)}}{\|\phi\|_{L^2(\Omega)}}.
\]
This quantity is the square root of the smallest eigenvalue of the Dirichlet Laplacian  (cf.~\cite[Theorem~6.5.2]{Evans10}):
\begin{equation}
\label{eigen}
-\Delta u = \lambda u \quad \text{in } \Omega, 
\quad u = 0 \quad \text{on } \partial \Omega.
\end{equation}
It is well known that, for $\Omega = (0,1)^d$, the eigenfunctions of the Dirichlet Laplacian are
\[\left\{S_k(x)=\prod_{i=1}^{d} \sin(\pi k_i x_i) : k=(k_1,\ldots,k_d) \in \mathbb{N}_+^d\right\},\]
and the corresponding eigenvalue for \( S_k(x) \) is \( \pi^2 |k|^2 \) (cf.~\cite[Chapter~6.4.1]{CH89}).  
The smallest eigenvalue of \eqref{eigen} is therefore \( \pi^2 d \).
Hence,
\[
C_P = \frac{1}{\sqrt{\pi^2 d}} 
= \frac{1}{\pi\sqrt{d}}.
\]
\end{proof}

\subsection{Proof of Theorem~\ref{thmllw21}}
\label{genllw21}

As mentioned in Section~\ref{sec3.3}, our strategy is a revision and generalization of the methods in~\cite{LuLuWang21}. 
In particular, Lemma~\ref{LLWlem18} generalizes their Lemma~18, 
Lemma~\ref{LLWprop19} generalizes their Proposition~19, 
and Lemma~\ref{LLWlem16} is simply a restatement of their Lemma~16. 

We let \( I_d \coloneqq [-\sqrt{d},\sqrt{d}] \) throughout the arguments.
\begin{lemma}
\label{LLWlem18}
Suppose \( g \in C^2(I_d) \) and there exists \( B > 0 \) such that 
\[
\| g^{(s)} \|_{L^{\infty}(I_d)} \leq B \quad \text{for all } s =0,1,2,
\]
and there exists \( \alpha \in (-\sqrt{d},\sqrt{d}) \) such that \( g'(\alpha) = 0 \).  
Then, for any positive integer \( m \), there exists a function \( g_m(z) \) defined on \( I_d \) of the form
\begin{equation}
	\label{gmrelu}
g_m(z) = \gamma + \sum_{i=1}^{2m} a_i \operatorname{ReLU}(\varepsilon_i z + b_i),
\end{equation}
where
\begin{equation}
	\label{gmcoeff}
|\gamma| \leq B, \quad |a_i| \leq \frac{2\sqrt{d}B}{m}, \quad \varepsilon_i \in \{\pm 1\}, \quad |b_i| \leq \sqrt{d}, \quad \text{for all } 1 \leq i \leq 2m,
\end{equation}
such that
\begin{equation}
\label{g-gmW}
\| g - g_m \|_{H^{1}(I_d)} \leq \frac{\sqrt{10}d^{5/4}B}{m}.
\end{equation}
\end{lemma}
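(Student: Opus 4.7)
The plan is to construct $g_m$ as the continuous piecewise linear interpolant of $g$ on the uniform partition of $I_d = [-\sqrt{d},\sqrt{d}]$ into $2m$ subintervals of width $h \coloneqq \sqrt{d}/m$. Setting $z_k \coloneqq -\sqrt{d} + kh$ for $k = 0, 1, \ldots, 2m$, letting $j^{\ast}$ be an index of a mesh node closest to $\alpha$ (so that $|z_{j^{\ast}} - \alpha| \leq h/2$), and writing $s_k \coloneqq (g(z_k) - g(z_{k-1}))/h$ for the slope of $g_m$ on $[z_{k-1}, z_k]$, I would split the representation at $z_{j^{\ast}}$ and use second differences to obtain the explicit ReLU expansion
\[
\begin{aligned}
g_m(z) = g(z_{j^{\ast}})
&+ s_{j^{\ast}+1}\operatorname{ReLU}(z - z_{j^{\ast}}) + \sum_{k=j^{\ast}+1}^{2m-1} (s_{k+1} - s_k)\operatorname{ReLU}(z - z_k) \\
&- s_{j^{\ast}}\operatorname{ReLU}(z_{j^{\ast}} - z) + \sum_{k=1}^{j^{\ast}-1} (s_{k+1} - s_k)\operatorname{ReLU}(z_k - z).
\end{aligned}
\]
This uses exactly $2m$ ReLU summands in the prescribed form, with $c = g(z_{j^{\ast}})$ (so $|c|\leq B$), $\varepsilon_i \in \{+1,-1\}$, and $|b_i| = |z_k| \leq \sqrt{d}$. (When $j^{\ast} \in \{0, 2m\}$ one of the two single terms and its associated sum collapses, and the total stays at $2m$.)

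The key step is bounding each ReLU coefficient by $4\sqrt{d}B/m$. For a generic interior index, the fundamental theorem of calculus gives the second-difference bound
\[
|s_{k+1} - s_k| = \frac{1}{h}\Bigl|\int_{z_k}^{z_{k+1}}\bigl(g'(t) - g'(t-h)\bigr)\,dt\Bigr| \leq h\,\|g''\|_{L^{\infty}(I_d)} \leq Bh.
\]
The two central coefficients $s_{j^{\ast}+1}$ and $s_{j^{\ast}}$ are controlled by exploiting the hypothesis $g'(\alpha) = 0$: by the mean value theorem $s_{j^{\ast}+1} = g'(\xi)$ for some $\xi \in (z_{j^{\ast}}, z_{j^{\ast}+1})$, and since $|\xi - \alpha| \leq |\xi - z_{j^{\ast}}| + |z_{j^{\ast}} - \alpha| \leq 3h/2$, I get $|s_{j^{\ast}+1}| = |g'(\xi) - g'(\alpha)| \leq \tfrac{3}{2}Bh$, with the analogous bound for $s_{j^{\ast}}$. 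Consequently every $|a_i| \leq \tfrac{3}{2}Bh = \tfrac{3}{2}B\sqrt{d}/m \leq 4\sqrt{d}B/m$.

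For the $H^1$ estimate, I would invoke the classical pointwise interpolation bounds $|g(z) - g_m(z)| \leq Bh^2/8$ and $|g'(z) - g_m'(z)| \leq Bh$ (for $z$ away from the nodes), both valid on each subinterval for a $C^2$ function. Integrating over $I_d$, whose length is $2\sqrt{d}$, with $h = \sqrt{d}/m$ yields
\[
\|g - g_m\|_{L^2(I_d)}^2 \leq \frac{d^{5/2}B^2}{32\,m^4}, \qquad
\|g' - g_m'\|_{L^2(I_d)}^2 \leq \frac{2d^{3/2}B^2}{m^2}.
\]
Since $d \geq 1$ and $m \geq 1$, the sum is bounded by $\frac{B^2}{m^2}\bigl(\tfrac{d^{5/2}}{32m^2} + 2d^{3/2}\bigr) \leq \frac{B^2}{m^2}\bigl(\tfrac{1}{32} + 2\bigr)d^{5/2} \leq 10\,d^{5/2}B^2/m^2$, which on taking square roots is exactly the advertised estimate~\eqref{g-gmW}.

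The main technical obstacle will be controlling the two central coefficients $s_{j^{\ast}}, s_{j^{\ast}+1}$, because $\alpha$ need not coincide with a mesh node. The hypothesis $g'(\alpha) = 0$ enters essentially here: without it, these two slopes would be only $O(1)$ rather than $O(h)$, and the coefficient bound $|a_i| \leq 4\sqrt{d}B/m$ would fail. The distance estimate $|\xi - \alpha| \leq 3h/2$ in the mean value argument is precisely the mechanism that resolves this issue, so verifying these two bounds is both the most delicate and the most conceptually important step of the proof.
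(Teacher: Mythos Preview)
Your proposal is correct and follows the same overall strategy as the paper---piecewise linear interpolation of $g$, rewritten as a ReLU sum centered at a node near $\alpha$, with the hypothesis $g'(\alpha)=0$ used to control the two central slopes. The one genuine difference is the choice of partition. The paper forces $\alpha$ to be exactly the middle node $z_m$ by taking two separate uniform subpartitions of $[-\sqrt d,\alpha]$ and $[\alpha,\sqrt d]$ with mesh widths $h_1,h_2\le 2\sqrt d/m$; then $g'(z_m)=0$ can be invoked directly in the mean-value argument for $|a_m|$ and $|a_{m+1}|$. You instead keep a single uniform partition with $h=\sqrt d/m$ and compensate by choosing $j^{\ast}$ as the node nearest to $\alpha$, paying for this with the distance estimate $|\xi-\alpha|\le 3h/2$ in the bound for $s_{j^{\ast}}$ and $s_{j^{\ast}+1}$. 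Your uniform mesh gives a smaller $h$ and hence slightly sharper coefficient and interpolation constants, while the paper's non-uniform mesh makes the central-coefficient step marginally cleaner since $\alpha$ sits exactly on a node; both routes reach the stated bounds comfortably.
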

\begin{proof}
	Let \(\{z_j\}_{0 \leq j \leq 2m}\) be a partition of \(I_d\) with  
\begin{align*}
& z_0 = -\sqrt{d}, \quad z_m = \alpha, \quad z_{2m} = \sqrt{d},\\
& z_{j+1} - z_j = h_1 \coloneqq \frac{\alpha + \sqrt{d}}{m}, \quad j = 0,\ldots, m-1,\\
& z_{j+1} - z_j = h_2 \coloneqq \frac{\sqrt{d} - \alpha}{m}, \quad j = m,\ldots, 2m-1.
\end{align*}
Let \( g_m(z) \) defined on \( I_d \) be the piecewise linear interpolation of \( g(z) \) with respect to \(\{z_j\}_{0 \leq j \leq 2m}\), i.e.,
\[
g_m(z) \coloneqq
\begin{cases}
g(z_{j+1})\dfrac{z - z_j}{h_1} + g(z_j)\dfrac{z_{j+1} - z}{h_1}, & z \in [z_j, z_{j+1}],\ j = 0,\ldots, m-1,\\[1.2ex]
g(z_{j+1})\dfrac{z - z_j}{h_2} + g(z_j)\dfrac{z_{j+1} - z}{h_2}, & z \in [z_j, z_{j+1}],\ j = m,\ldots, 2m-1.
\end{cases}
\] 
Then, according to \cite[Chapter~11]{AG11}, and using \( h_1, h_2 \leq 2\sqrt{d}/m \), we obtain
\begin{equation}
	\label{g-gm}
	\|g - g_m\|_{L^{\infty}(I_d)}
\leq \frac{\max\{h_1,h_2\}^2}{8} \,\|g''\|_{L^{\infty}(I_d)}
\leq \frac{dB}{2m^2}
\leq \frac{dB}{m}.
\end{equation}
We also claim that 
\begin{equation}
\label{g'-gm'}
\|g' - g_m'\|_{L^{\infty}(I_d)} \leq \frac{2\sqrt{d}B}{m}\leq \frac{2dB}{m}.
\end{equation} 
Indeed, if \( z \in [z_j, z_{j+1}] \) for some \( 0 \leq j \leq m-1 \), then by the mean value theorem there exist 
\(\xi, \eta \in (z_j, z_{j+1})\) such that 
\begin{align*}
    |g'(z) - g_m'(z)|& = \left| g'(z) - \frac{g(z_{j+1}) - g(z_j)}{h_1} \right| = |g'(z) - g'(\xi)| \\
    & = |g''(\eta)||z - \xi|\leq |g''(\eta)|h_1 \leq \frac{2\sqrt{d}B}{m}.
\end{align*}
The same bound follows by the same argument when 
\( z \in [z_j, z_{j+1}] \) for some \( m \leq j \leq 2m-1 \).  
This proves \eqref{g'-gm'}.
The estimate~\eqref{g-gmW} then follows by combining~\eqref{g-gm}, \eqref{g'-gm'}, and   
\begin{align*}
    \| g - g_m \|_{H^{1}(I_d)}^2
& = \| g - g_m \|_{L^{2}(I_d)}^2 + \| g' - g_m' \|_{L^{2}(I_d)}^2 \\
& \leq 2\sqrt{d}\|g - g_m\|_{L^{\infty}(I_d)}^2 + 2\sqrt{d}\|g' - g_m'\|_{L^{\infty}(I_d)}^2.
\end{align*}

Now we show that \( g_m(z) \) can be written in the ReLU form~\eqref{gmrelu} and satisfies the coefficient bound in~\eqref{gmcoeff}.  
Indeed, it is straightforward to verify that  
\[
g_m(z) = g(z_m) + \sum_{i=1}^{m} a_i \operatorname{ReLU}(-z + z_i) 
        + \sum_{i=m+1}^{2m} a_i \operatorname{ReLU}(z - z_{i-1}), \quad z\in I_d,
\]
where
\[
a_i =
\begin{cases}
\dfrac{g(z_{i-1}) - 2g(z_i) + g(z_{i+1})}{h_1}, & i = 1,\ldots, m-1,\\[1.0ex]
\dfrac{g(z_{m-1}) - g(z_m)}{h_1}, & i = m,\\[1.0ex]
\dfrac{g(z_{m+1}) - g(z_m)}{h_2}, & i = m+1,\\[1.0ex]
\dfrac{g(z_{i-2}) - 2g(z_{i-1}) + g(z_{i})}{h_2}, & i = m+2,\ldots, 2m.
\end{cases}
\]
It remains to verify that \(|a_i|\) satisfies the upper bound in~\eqref{gmcoeff}.

\medskip
\noindent\textbf{Case \(i = 1,\ldots, m-1\).}  
By the Taylor expansion, there exist \(\eta_1 \in (z_{i-1}, z_i)\) and \(\eta_2 \in (z_i, z_{i+1})\) such that 
\begin{align*}
g(z_{i-1}) &= g(z_i) - g'(z_i)h_1 + \frac{g''(\eta_1)}{2} h_1^2,\\
g(z_{i+1}) &= g(z_i) + g'(z_i)h_1 + \frac{g''(\eta_2)}{2} h_1^2.
\end{align*}
Hence,
\[
|a_i| = \left| \frac{g(z_{i-1}) - 2g(z_i) + g(z_{i+1})}{h_1} \right| 
       = \left|\frac{g''(\eta_1)}{2} h_1 + \frac{g''(\eta_2)}{2} h_1\right| 
       \leq B h_1 
       \leq \frac{2\sqrt{d}B}{m}.
\]
\medskip
\noindent\textbf{Case \(i = m\).}  
By the construction of the partition, \(g'(z_m) = g'(\alpha) = 0\).  
Hence, by the mean value theorem there exist \(\xi, \eta \in (z_{m-1}, z_m)\) such that  
\[
|a_m| = \left| \frac{g(z_{m-1}) - g(z_m)}{h_1} \right| 
       = |g'(\xi)| 
       = |g'(\xi) - g'(z_m)| 
       \leq |g''(\eta)|\,h_1 
       \leq \frac{2\sqrt{d}B}{m}.
\]

\medskip
\noindent\textbf{Case \(i = m+1\).}  
By the same argument as in the case \(i = m\), we obtain the same bound.

\medskip
\noindent\textbf{Case \(i = m+2,\ldots, 2m\).}  
By the same argument as in the case \(1 \leq i \leq m-1\), we obtain the same bound.
\end{proof}

\begin{lemma}
\label{LLWprop19}
For \(B > 0\), define the following families of functions on \(\Omega=(0,1)^d\subseteq \BR^d \) (it is easy to verify that they are all in \(H^1(\Omega)\)):
\begin{align*}
\SF_{\sin}(B) &\coloneqq \left\{ \frac{A}{1+\pi^2|k|^2} \sin(\pi k^{\top}x) : |A| \leq B,\ k \in \mathbb{Z}^d \setminus \{0\} \right\},\\
\SF_{\cos}(B) &\coloneqq \left\{ \frac{A}{1+\pi^2|k|^2} \cos(\pi k^{\top}x) : |A| \leq B,\ k \in \mathbb{Z}^d \setminus \{0\} \right\},\\
\SF_{\operatorname{ReLU}}(B) &\coloneqq \left\{ \gamma + A\operatorname{ReLU}(w^{\top}x + b) : |\gamma| \leq B,\ |A| \leq 4\sqrt{d}B,\ |w| = 1,\ |b| \leq \sqrt{d} \right\}.
\end{align*}
Then the \(H^1(\Omega)\)-closures of the convex hulls of \(\SF_{\sin}(B)\) and \(\SF_{\cos}(B)\) 
are both contained in the \(H^1(\Omega)\)-closure of the convex hull of \(\SF_{\operatorname{ReLU}}(B)\).
\end{lemma}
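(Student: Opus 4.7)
The plan is to prove the stronger pointwise statement: every element of $\SF_{\sin}(B)$ and $\SF_{\cos}(B)$ already lies in $\overline{\operatorname{conv}(\SF_{\operatorname{ReLU}}(B))}^{H^1(\Omega)}$. Since the latter set is closed and convex in $H^1(\Omega)$, the inclusions of the two closed convex hulls then follow immediately.

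Fix $\phi(x) = \tfrac{A}{1+\pi^2\|k\|^2}\sin(\pi k^{\top} x) \in \SF_{\sin}(B)$; the cosine case is analogous. I would set $w \coloneqq k/\|k\|$, a unit vector with nonnegative entries, and factor $\phi(x) = g(w^{\top} x)$ with $g(z) \coloneqq \tfrac{A}{1+\pi^2\|k\|^2}\sin(\pi\|k\| z)$. By Cauchy--Schwarz, $w^{\top} x \in [0,\sqrt{d}] \subseteq I_d$ for $x \in \Omega$. The hypotheses of Lemma~\ref{LLWlem18} are then quick to verify: using the elementary bound $(\pi\|k\|)^s/(1+\pi^2\|k\|^2) \leq 1$ for $s \in \{0,1,2\}$ gives $\|g^{(s)}\|_{L^\infty(I_d)} \leq |A| \leq B$, and $g'(\alpha) = 0$ at $\alpha = 1/(2\|k\|) \in (0, 1/2] \subset (-\sqrt{d}, \sqrt{d})$ (for the cosine case take $\alpha = 0$).

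Lemma~\ref{LLWlem18} then produces, for each $m \geq 1$, a function $g_m(z) = c + \sum_{i=1}^{2m} a_i \operatorname{ReLU}(\varepsilon_i z + b_i)$ with $|c| \leq B$, $|a_i| \leq 4\sqrt{d}B/m$, $\varepsilon_i \in \{\pm 1\}$, $|b_i| \leq \sqrt{d}$, and $\|g-g_m\|_{H^1(I_d)} \leq \sqrt{10}\,d^{5/4}B/m$. I would then set $\phi_m(x) \coloneqq g_m(w^{\top} x)$ and use the chain rule together with the uniform bounds $\|g-g_m\|_{L^\infty(I_d)} \leq dB/m$ and $\|g'-g_m'\|_{L^\infty(I_d)} \leq 2\sqrt{d}B/m$ (derived inside the proof of Lemma~\ref{LLWlem18}) together with $\|w\|=1$ to conclude $\|\phi-\phi_m\|_{H^1(\Omega)} \to 0$ as $m \to \infty$. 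The key algebraic step is the rewriting
\[
\phi_m(x) \;=\; \frac{1}{2m}\sum_{i=1}^{2m}\Bigl[\,c \;+\; 2m\,a_i\operatorname{ReLU}\bigl((\varepsilon_i w)^{\top} x + b_i\bigr)\Bigr],
\]
which exhibits $\phi_m$ as a convex combination of $2m$ elements of $\SF_{\operatorname{ReLU}}(B)$: each summand satisfies $|c|\leq B$, $|2m\,a_i|\leq 8\sqrt{d}B$, $\|\varepsilon_i w\| = 1$, and $|b_i|\leq \sqrt{d}$, precisely matching the defining constants of $\SF_{\operatorname{ReLU}}(B)$.

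The argument is not conceptually difficult; the main obstacle is purely bookkeeping---one has to choose the one-dimensional reduction $z = w^{\top} x$ so that the normalisation factor $1+\pi^2\|k\|^2$ absorbs the amplification caused by differentiating $\sin(\pi\|k\|z)$, and then ensure that the extra factor of $2m$ needed to convert a sum into an average of $2m$ terms is exactly accommodated by the $4\sqrt{d}B/m$ bound from Lemma~\ref{LLWlem18}, yielding the advertised ceiling $8\sqrt{d}B$. Once these constants line up, $\phi \in \overline{\operatorname{conv}(\SF_{\operatorname{ReLU}}(B))}^{H^1(\Omega)}$, and passing to closed convex hulls finishes the proof.
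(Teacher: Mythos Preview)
Your proposal is correct and follows essentially the same approach as the paper: reduce to a one-dimensional ridge function via $z=(k/\|k\|)^\top x$, invoke Lemma~\ref{LLWlem18}, and then observe that the resulting ReLU approximant lies in $\operatorname{conv}(\SF_{\operatorname{ReLU}}(B))$. Your write-up is in fact more explicit than the paper's on two points---the verification of the derivative bounds $\|g^{(s)}\|_{L^\infty(I_d)}\leq B$ and the rewriting of $\phi_m$ as an average of $2m$ elements of $\SF_{\operatorname{ReLU}}(B)$---so nothing is missing.
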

\begin{proof}
	We only prove the sine function case, as the cosine case follows similarly.  
It suffices to show that  
\[
\frac{A}{1+\pi^2|k|^2} \sin(\pi k^{\top}x) 
\in \text{the $H^1(\Omega)$-closure of } \operatorname{conv}\left(\SF_{\operatorname{ReLU}}(B)\right).
\]

By Lemma~\ref{LLWlem18}, the function  
\(\frac{A}{1+\pi^2|k|^2} \sin(\pi |k| z)\) defined on \(I_d\) can be \(H^1(I_d)\)-approximated by a linear combination of a constant and terms of the form \(\operatorname{ReLU}(\varepsilon z + b)\), with the sum of the absolute values of the coefficients of \(\operatorname{ReLU}(\varepsilon z + b)\) bounded by \(4\sqrt{d}B\). Hence, 
$
\frac{A}{1+\pi^2|k|^2} \sin(\pi |k| z)
$
is contained in the \( H^1(I_d) \)-closure of 
\[
\operatorname{conv}\left\{ \gamma + A\operatorname{ReLU}(\varepsilon z + b) : |\gamma| \leq B,\ |A| \leq 4\sqrt{d}B,\ \varepsilon \in \{\pm 1\},\ |b| \leq \sqrt{d} \right\}.
\]
Since \(|(k/|k|)^{\top}x| \leq |x| \leq \sqrt{d}\), it follows that 
\[
\frac{A}{1+\pi^2|k|^2} \sin(\pi k^{\top}x) 
= \frac{A}{1+\pi^2|k|^2} \sin\!\left(\pi |k|\,(k/|k|)^{\top}x\right)
\]
also lies in the \(H^1(\Omega)\)-closure of 
\begin{equation}
\label{conv}
\operatorname{conv}\left\{ \gamma + A\operatorname{ReLU}\left(\varepsilon (k/|k|)^{\top}x + b\right) : |\gamma| \leq B,\ |A| \leq 4\sqrt{d}B,\ \varepsilon \in \{\pm 1\},\ |b| \leq \sqrt{d} \right\}.
\end{equation}
The result then follows from the fact that \eqref{conv} is contained in \(\operatorname{conv}\left(\SF_{\operatorname{ReLU}}(B)\right)\).
\end{proof}

\begin{lemma}[\cite{P81,Barron93}]
\label{LLWlem16}
Let $(\SG,\|\cdot\|)$ be a subset of a Hilbert space such that the norm of every element in \(\SG\) is bounded by \(B_{\SG} > 0\).  
Suppose \(u\) belongs to the closure of the convex hull of \(\SG\).  
Then, for every positive integer \(k\), there exist \(\{g_i\}_{i=1}^k \subseteq \SG\) and \(\{c_i\}_{i=1}^k \subseteq [0,1]\) with \(\sum_{i=1}^k c_i = 1\) such that 
\[
\left\| u - \sum_{i=1}^k c_i g_i \right\| \leq \frac{B_{\SG}}{\sqrt{k}}.
\]
\end{lemma}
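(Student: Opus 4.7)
The plan is to prove this classical Maurey--Pisier--Barron lemma by a random convex combination argument. First I would reduce to the case where $u$ itself lies in the convex hull $\operatorname{conv}(\SG)$, rather than only its closure. Given $\varepsilon > 0$, choose $\tilde{u} \in \operatorname{conv}(\SG)$ with $\|u - \tilde{u}\| \leq \varepsilon$; if the conclusion holds for $\tilde u$ with some realization $\sum c_i g_i$, then by the triangle inequality $\|u - \sum c_i g_i\| \leq B_\SG/\sqrt k + \varepsilon$. A standard diagonal extraction then yields the desired bound in the closure: for each $n$ apply the convex-hull case to some $\tilde u_n \to u$ with parameter $\varepsilon_n \to 0$, producing $k$-tuples $(g_1^{(n)},\ldots,g_k^{(n)})$ whose averages cluster at a point within distance $B_\SG/\sqrt k$ of $u$. (Alternatively one can simply prove the bound with $B_\SG/\sqrt k + \varepsilon$ for every $\varepsilon > 0$, which yields the claim since the distance from $u$ to a finite subset is continuous in the parameters; but the cleanest route is just to reduce to $u \in \operatorname{conv}(\SG)$ and observe afterwards that the proof gives the bound $B_\SG/\sqrt k$ as an expectation, so the same holds in the closure by a limiting argument.)

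Second, assuming $u = \sum_{j=1}^N \lambda_j h_j$ with $\lambda_j \geq 0$, $\sum_j \lambda_j = 1$, and $h_j \in \SG$, I introduce i.i.d.\ random elements $X_1, \ldots, X_k$ taking values in $\{h_1, \ldots, h_N\} \subseteq \SG$ according to the probability distribution $\BP(X_i = h_j) = \lambda_j$. By construction $\BE[X_i] = u$ for each $i$, and the random variables live on a finite probability space so no Bochner-integral subtlety arises.

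Third, I compute the expected squared error. Expanding and using the Hilbert-space inner product together with the independence of the $X_i$'s, the cross terms $\BE\langle X_i - u,\, X_j - u\rangle$ vanish for $i \neq j$, so that
\[
\BE\Bigl\|\tfrac{1}{k}\sum_{i=1}^k X_i - u\Bigr\|^2
= \tfrac{1}{k^2}\sum_{i=1}^k \BE\|X_i - u\|^2
= \tfrac{1}{k}\BE\|X_1 - u\|^2
= \tfrac{1}{k}\bigl(\BE\|X_1\|^2 - \|u\|^2\bigr)
\leq \tfrac{B_\SG^2}{k},
\]
since $\|X_1\| \leq B_\SG$ almost surely. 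Because the expectation of a nonnegative quantity is at most $B_\SG^2/k$, there must exist a realization $(g_1,\ldots,g_k) \in \SG^k$ for which $\|\tfrac{1}{k}\sum_{i=1}^k g_i - u\| \leq B_\SG/\sqrt k$; taking $c_i = 1/k$ yields the lemma (possibly with repeated $g_i$'s, which is permitted by the statement).

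The only real obstacle is the closure step in paragraph one, which is mild: one must verify that approximating $u$ by finitely supported convex combinations is always possible (true by definition of closure of the convex hull) and that the $\varepsilon$-slack can be absorbed. The core probabilistic identity in paragraph three is essentially the Hilbert-space variance decomposition, and once the expectation bound is established the existence of a good realization is immediate from the pigeonhole principle applied to expectations.
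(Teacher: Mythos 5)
Your core argument (steps two and three) is the standard Maurey--Pisier--Barron empirical method, and it is correct as far as it goes: for $\tilde u=\sum_{j}\lambda_j h_j$ in $\mathrm{conv}(\SG)$, i.i.d.\ sampling with $\BP(X_i=h_j)=\lambda_j$ and the Hilbert-space variance identity give
\[
\BE\Bigl\|\tfrac1k\sum_{i=1}^k X_i-\tilde u\Bigr\|^2=\tfrac1k\bigl(\BE\|X_1\|^2-\|\tilde u\|^2\bigr)\le \tfrac{B_{\SG}^2}{k},
\]
and some realization is at most the mean. The paper does not prove this lemma itself (it cites Pisier and Barron), so there is nothing to compare on that part; your sampling argument is the expected one.

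The genuine gap is the passage from $\mathrm{conv}(\SG)$ to its closure. Neither the ``diagonal extraction / cluster point'' step nor the parenthetical claim that the bound $B_{\SG}/\sqrt k+\varepsilon$ for every $\varepsilon>0$ ``yields the claim'' is valid as stated: the set of $k$-term convex combinations of $\SG$ is parametrized by the generally non-compact set $\SG^k\times\{\text{weight simplex}\}$ and need not be closed, so an infimum equal to $B_{\SG}/\sqrt k$ need not be attained, and a bounded sequence of averages only has a \emph{weak} cluster point, which need not be of the form $\sum_i c_i g_i$ with $g_i\in\SG$. In fact the non-strict bound can genuinely fail for a limit point: with $g_n=\sqrt{1-4^{-n}}\,e_n+2^{-n}e_0$ (all of norm $1$), the origin lies in the closed convex hull, yet every convex combination of at most $k\ge 2$ of the $g_n$ has squared norm $\sum_i c_i^2+\sum_{i\ne j}c_ic_j2^{-n_i-n_j}>1/k$; this is precisely why Barron's Lemma~1 is stated with a strict slack $c'>B_{\SG}^2-\|u\|^2$. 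The standard repair of your proof is to keep the term you discarded: applying the sharper bound $\tfrac1k\bigl(B_{\SG}^2-\|\tilde u\|^2\bigr)$ to $\tilde u\in\mathrm{conv}(\SG)$ with $\|u-\tilde u\|\le\varepsilon$ yields a $k$-term combination within $\varepsilon+\sqrt{\bigl(B_{\SG}^2-(\|u\|-\varepsilon)^2\bigr)/k}$ of $u$, and when $\|u\|>0$ this drops strictly below $B_{\SG}/\sqrt k$ as $\varepsilon\to0$, so the slack is absorbed; only the degenerate case $u=0\notin\mathrm{conv}(\SG)$ (where the target function is zero and the approximation statement is vacuous in the paper's application) must be excluded or handled by restating the lemma with the strict-inequality slack.
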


\begin{proof}[\textbf{Proof of Theorem~\ref{thmllw21}}]
	We only prove the case for \(\CB_s^2(\Omega)\), as the case for \(\CB_c^2(\Omega)\) follows similarly.  

Although the definition of an \(s\)-Barron function involves the basis \(\{S_k(x)\}\) 
with \(k = (k_1,\ldots,k_d) \in \mathbb{N}^d\) such that all \(k_i \neq 0\), 
for the purpose of unifying the proof with the \(\CB_c^2(\Omega)\) case, 
we set \(a_k = 0\) for all \(k \in \mathbb{N}^d\) having at least one component \(k_i = 0\) 
in the expansion
\[
g(x) = \sum_{k \in \mathbb{N}^d} a_k S_k(x).
\]
With this convention, we can rewrite
\begin{align*}
g(x) - a_0 =& \sum_{k \in \mathbb{N}^d \setminus \{0\}} a_k S_k(x) 
= \sum_{k \in \mathbb{N}^d \setminus \{0\}} a_k \left( \frac{1}{2^d} 
   \sum_{\epsilon\in \{\pm 1\}^{d} } 
   \theta_{\epsilon} \operatorname{sc}(\pi {(\epsilon\circ k)}^{\top} x) \right) \\
=& \sum_{k \in \mathbb{N}^d \setminus \{0\}} \sum_{\epsilon\in \{\pm 1\}^{d} } 
   \frac{|a_k| (1+\pi^2|\epsilon\circ k|^2)}{2^d A_g} \cdot 
   \frac{\mathrm{sgn}(a_k)\theta_{\epsilon}A_g}{1+\pi^2|\epsilon\circ k|^2} \,\operatorname{sc}(\pi {(\epsilon\circ k)}^{\top} x).
\end{align*}
Here, \[
\theta_{\epsilon}=
\begin{cases}
(-1)^{d/2}\prod_{j=1}^d \epsilon_j, & d \text{ even},\\[1mm]
(-1)^{(d-1)/2}\prod_{j=1}^d \epsilon_j, & d \text{ odd},
\end{cases}
\qquad
\mathrm{sc}=
\begin{cases}
\cos, & d \text{ even},\\
\sin, & d \text{ odd},
\end{cases}
\]
 and   
\[
A_g \coloneqq \sum_{k \in \mathbb{N}^d \setminus \{0\}} |a_k| \left(1 + \pi^2|k|^2\right) \leq \|g\|_{\CB_s^2(\Omega)}.
\]
(In the \(\CB_c^2(\Omega)\) case, only the cosine term appears.)
The case \(A_g=0\) is trivial, so we assume \(A_g>0\). Under this normalization,
\[
\sum_{k \in \mathbb{N}^d \setminus \{0\}} \sum_{\epsilon\in \{\pm 1\}^{d} } 
 \frac{|a_k| (1+\pi^2|\epsilon\circ k|^2)}{2^d A_g} = 1.
\]

Next, we observe that
\[
\|g\|_{H^1(\Omega)}^2
=\sum_{k\in \mathbb{N}^d}|\alpha_k|(1+\pi^2|k|^2)|a_k|^2
\leq \|g\|_{\CB_s^2(\Omega)}^2<\infty,
\]
where \(\alpha_k\) is a constant depending on \(k\) with \(|\alpha_k|\leq 1\).  
This shows that \(g(x) \in H^1(\Omega)\), and hence \(g(x)-a_0 \in H^1(\Omega)\) as well.

Combining the above observations, we conclude that  \(g(x) - a_0\) lies in the \(H^1(\Omega)\)-closure of either \(\operatorname{conv}(\SF_{\cos}(\|g\|_{\CB_s^2(\Omega)}))\) or \(\operatorname{conv}(\SF_{\sin}(\|g\|_{\CB_s^2(\Omega)}))\).  
Hence, by Lemma~\ref{LLWprop19}, it is contained in the \(H^1(\Omega)\)-closure of \(\operatorname{conv}(\SF_{\operatorname{ReLU}}(\|g\|_{\CB_s^2(\Omega)}))\).
Moreover, since \(|a_0| \leq \|g\|_{\CB_s^2(\Omega)}\) (not needed here as \(a_0 = 0\), but required for the \(\CB_c^2(\Omega)\) case; we retain it for consistency of the proofs), it follows that \(g(x)\) lies in the \(H^1(\Omega)\)-closure of the convex hull of  
\begin{align*}
    & \SG\coloneqq \Big\{ \gamma + A\operatorname{ReLU}(w^{\top}x + b) : \\
    &\qquad\qquad\qquad\qquad |\gamma| \leq 2\|g\|_{\CB_s^2(\Omega)},\ |A| \leq 4\sqrt{d}\|g\|_{\CB_s^2(\Omega)},\ |w| = 1,\ |b| \leq \sqrt{d} \Big\}.
\end{align*}
The result then follows from Lemma~\ref{LLWlem16} together with the bound for the \(H^1\)-norm of each \(h \in \SG\):
\begin{align*}
\|h\|_{H^1(\Omega)}^2
&\leq \left( 2\|g\|_{\CB_s^2(\Omega)} + 4\sqrt{d}\|g\|_{\CB_s^2(\Omega)}(\sqrt{d} + \sqrt{d}) \right)^2 + (4\sqrt{d}\|g\|_{\CB_s^2(\Omega)})^2\\
&= (64d^2 + 48d + 4)\|g\|_{\CB_s^2(\Omega)}^2.
\end{align*}
\end{proof}

\addcontentsline{toc}{section}{References}
\bibliographystyle{alpha}
\bibliography{reference.bib}

@Book{Evans10,
 Author = {Evans, Lawrence C.},
 Title = {Partial differential equations},
 Edition = {2nd},
 FSeries = {Graduate Studies in Mathematics},
 Series = {Grad. Stud. Math.},
 ISSN = {1065-7338},
 Volume = {19},
 ISBN = {978-0-8218-4974-3; 978-1-4704-6942-9; 978-1-4704-1144-2},
 Year = {2010},
 Publisher = {Providence, RI: American Mathematical Society (AMS)},
 Language = {English},
 Keywords = {35-01,49-01},
 zbMATH = {5681750},
 Zbl = {1194.35001}
}

@Article{E,
 Author = {E, Weinan and Ma, Chao and Wu, Lei},
 Title = {The {Barron} space and the flow-induced function spaces for neural network models},
 FJournal = {Constructive Approximation},
 Journal = {Constr. Approx.},
 ISSN = {0176-4276},
 Volume = {55},
 Number = {1},
 Pages = {369--406},
 Year = {2022},
 Language = {English},
 DOI = {10.1007/s00365-021-09549-y},
 Keywords = {65D15,68T07},
 zbMATH = {7493722},
 Zbl = {1490.65020}
}

@article{EHJ17,
 author = {E, Weinan and Han, Jiequn and Jentzen, Arnulf},
 title = {Deep learning-based numerical methods for high-dimensional parabolic partial differential equations and backward stochastic differential equations},
 fjournal = {Communications in Mathematics and Statistics},
 journal = {Commun. Math. Stat.},
 issn = {2194-6701},
 volume = {5},
 number = {4},
 pages = {349--380},
 year = {2017},
 language = {English},
 doi = {10.1007/s40304-017-0117-6},
 keywords = {65C30,60H15,60H35,35R60,65M75,35Q35,35F21},
 zbMATH = {6832376},
 Zbl = {1382.65016}
}

@article{HJE18,
 author = {Han, Jiequn and Jentzen, Arnulf and E, Weinan},
 title = {Solving high-dimensional partial differential equations using deep learning},
 fjournal = {Proceedings of the National Academy of Sciences of the United States of America},
 journal = {Proc. Natl. Acad. Sci. USA},
 issn = {0027-8424},
 volume = {115},
 number = {34},
 pages = {8505--8510},
 year = {2018},
 language = {English},
 doi = {10.1073/pnas.1718942115},
 keywords = {35K55,68T05,91A26,92C20},
 url = {europepmc.org/articles/pmc6112690},
 zbMATH = {7076240},
 Zbl = {1416.35137}
}

@article{Sir18,
 author = {Sirignano, Justin and Spiliopoulos, Konstantinos},
 title = {{DGM}: a deep learning algorithm for solving partial differential equations},
 fjournal = {Journal of Computational Physics},
 journal = {J. Comput. Phys.},
 issn = {0021-9991},
 volume = {375},
 pages = {1339--1364},
 year = {2018},
 language = {English},
 doi = {10.1016/j.jcp.2018.08.029},
 keywords = {65M75,68T05,65C05,65M12,35K59},
 zbMATH = {7079544},
 Zbl = {1416.65394}
}

@article{Barron93,
 author = {Barron, Andrew R.},
 title = {Universal approximation bounds for superpositions of a sigmoidal function},
 fjournal = {IEEE Transactions on Information Theory},
 journal = {IEEE Trans. Inf. Theory},
 issn = {0018-9448},
 volume = {39},
 number = {3},
 pages = {930--945},
 year = {1993},
 language = {English},
 doi = {10.1109/18.256500},
 keywords = {68T05},
 url = {semanticscholar.org/paper/04113e8974341f97258800126d05fd8df2751b7e},
 zbMATH = {495145},
 Zbl = {0818.68126}
}

@misc{Jason18,
      title={Risk Bounds for High-dimensional Ridge Function Combinations Including Neural Networks}, 
      author={Jason M. Klusowski and Andrew R. Barron},
      year={2018},
      eprint={1607.01434},
      archivePrefix={arXiv},
      primaryClass={math.ST},
      url={https://arxiv.org/abs/1607.01434}, 
}

@article{Chen21,
  title={On the representation of solutions to elliptic pdes in barron spaces},
  author={Chen, Ziang and Lu, Jianfeng and Lu, Yulong},
  journal={Advances in neural information processing systems},
  volume={34},
  pages={6454--6465},
  year={2021}
}

@InProceedings{Mar23,
  title = 	 {Neural Network Approximations of {PDE}s Beyond Linearity: A Representational Perspective},
  author =       {Marwah, Tanya and Lipton, Zachary Chase and Lu, Jianfeng and Risteski, Andrej},
  booktitle = 	 {Proceedings of the 40th International Conference on Machine Learning},
  pages = 	 {24139--24172},
  year = 	 {2023},
  editor = 	 {Krause, Andreas and Brunskill, Emma and Cho, Kyunghyun and Engelhardt, Barbara and Sabato, Sivan and Scarlett, Jonathan},
  volume = 	 {202},
  series = 	 {Proceedings of Machine Learning Research},
  month = 	 {23--29 Jul},
  publisher =    {PMLR},
  pdf = 	 {https://proceedings.mlr.press/v202/marwah23a/marwah23a.pdf},
  url = 	 {https://proceedings.mlr.press/v202/marwah23a.html},
  abstract = 	 {A burgeoning line of research has developed deep neural networks capable of approximating the solutions to high dimensional PDEs, opening related lines of theoretical inquiry focused on explaining how it is that these models appear to evade the curse of dimensionality. However, most theoretical analyses thus far have been limited to linear PDEs. In this work, we take a step towards studying the representational power of neural networks for approximating solutions to nonlinear PDEs. We focus on a class of PDEs known as <em>nonlinear elliptic variational PDEs</em>, whose solutions minimize an <em>Euler-Lagrange</em> energy functional $\mathcal{E}(u) = \int_\Omega L(x, u(x), \nabla u(x)) - f(x) u(x)dx$. We show that if composing a function with Barron norm $b$ with partial derivatives of $L$ produces a function of Barron norm at most $B_L b^p$, the solution to the PDE can be $\epsilon$-approximated in the $L^2$ sense by a function with Barron norm $O\left(\left(dB_L\right)^{\max\{p \log(1/ \epsilon), p^{\log(1/\epsilon)}\}}\right)$. By a classical result due to Barron (1993), this correspondingly bounds the size of a 2-layer neural network needed to approximate the solution. Treating $p, \epsilon, B_L$ as constants, this quantity is polynomial in dimension, thus showing neural networks can evade the curse of dimensionality. Our proof technique involves neurally simulating (preconditioned) gradient in an appropriate Hilbert space, which converges exponentially fast to the solution of the PDE, and such that we can bound the increase of the Barron norm at each iterate. Our results subsume and substantially generalize analogous prior results for linear elliptic PDEs over a unit hypercube.}
}

@InProceedings{LuLuWang21,
  title = 	 {A Priori Generalization Analysis of the Deep Ritz Method for Solving High Dimensional Elliptic Partial Differential Equations},
  author =       {Lu, Yulong and Lu, Jianfeng and Wang, Min},
  booktitle = 	 {Proceedings of Thirty Fourth Conference on Learning Theory},
  pages = 	 {3196--3241},
  year = 	 {2021},
  editor = 	 {Belkin, Mikhail and Kpotufe, Samory},
  volume = 	 {134},
  series = 	 {Proceedings of Machine Learning Research},
  month = 	 {15--19 Aug},
  publisher =    {PMLR},
  pdf = 	 {http://proceedings.mlr.press/v134/lu21a/lu21a.pdf},
  url = 	 {https://proceedings.mlr.press/v134/lu21a.html},
  abstract = 	 {This paper concerns the a priori generalization analysis of the Deep Ritz Method (DRM) [W. E and B. Yu, 2017], a popular neural-network-based method for solving high dimensional partial differential equations.  We derive the generalization error bounds of two-layer neural networks in the framework of the DRM for solving two prototype elliptic PDEs: Poisson equation and static Schrödinger equation on the $d$-dimensional unit hypercube. Specifically, we prove that the convergence rates of generalization errors are independent of the dimension $d$, under the a priori assumption that the exact solutions of the PDEs lie in a suitable low-complexity space called spectral Barron space. Moreover, we give sufficient conditions on the forcing term and the potential function which guarantee that the solutions are spectral Barron functions. We achieve this by developing a new solution theory for the PDEs on the spectral Barron space, which can be viewed as an analog of the classical Sobolev regularity theory for PDEs.}
}

@article{LuLu22,
 author = {Lu, Jianfeng and Lu, Yulong},
 title = {A priori generalization error analysis of two-layer neural networks for solving high dimensional {Schr{\"o}dinger} eigenvalue problems},
 fjournal = {Communications of the American Mathematical Society},
 journal = {Commun. Am. Math. Soc.},
 issn = {2692-3688},
 volume = {2},
 pages = {1--21},
 year = {2022},
 language = {English},
 doi = {10.1090/cams/5},
 keywords = {65N99,35P15,35Q40,65N12,65N25,68T07},
 zbMATH = {7750988},
 Zbl = {1542.65187}
}

@article{CLLZ23,
 author = {Chen, Ziang and Lu, Jianfeng and Lu, Yulong and Zhou, Shengxuan},
 title = {A regularity theory for static {Schr{\"o}dinger} equations on {{\(\mathbb{R}^d\)}} in spectral {Barron} spaces},
 fjournal = {SIAM Journal on Mathematical Analysis},
 journal = {SIAM J. Math. Anal.},
 issn = {0036-1410},
 volume = {55},
 number = {1},
 pages = {557--570},
 year = {2023},
 language = {English},
 doi = {10.1137/22M1478719},
 keywords = {65N99,35Q40,68T07,35Q55,35A01,35A02,35B65,65N12},
 zbMATH = {7674170},
 Zbl = {1549.65534}
}

@article{Feng25,
author = {Feng, Ye and Lu, Jianfeng},
title = {Solution Theory of {Hamilton--Jacobi--Bellman} Equations in Spectral {Barron} Spaces},
journal = {SIAM Journal on Mathematical Analysis},
volume = {58},
number = {1},
pages = {636-660},
year = {2026},
doi = {10.1137/25M1745763},

URL = { 
    
        https://doi.org/10.1137/25M1745763
    
    

},
eprint = { 
    
        https://doi.org/10.1137/25M1745763
    

}
,
    abstract = { Abstract. We study the solution theory of the whole-space static (elliptic) Hamilton–Jacobi–Bellman (HJB) equation in spectral Barron spaces. We prove that under the assumption that the coefficients involved are spectral Barron functions and the discount factor is sufficiently large, there exists a sequence of uniformly bounded spectral Barron functions that converges locally uniformly to the solution. As a consequence, the solution of the HJB equation can be approximated by two-layer neural networks without curse of dimensionality. }
}

@article{Harry25,
 author = {Yserentant, Harry},
 title = {The regularity of electronic wave functions in {Barron} spaces},
 fjournal = {European Series in Applied and Industrial Mathematics (ESAIM): Mathematical Modelling and Numerical Analysis},
 journal = {ESAIM, Math. Model. Numer. Anal.},
 issn = {0764-583X},
 volume = {60},
 number = {2},
 pages = {689--699},
 year = {2026},
 language = {English},
 doi = {10.1051/m2an/2026015},
 keywords = {35J10,35B65},
 zbMATH = {8186005}
}

@article{EYu18,
 author = {E, Weinan and Yu, Bing},
 title = {The deep {Ritz} method: a deep learning-based numerical algorithm for solving variational problems},
 fjournal = {Communications in Mathematics and Statistics},
 journal = {Commun. Math. Stat.},
 issn = {2194-6701},
 volume = {6},
 number = {1},
 pages = {1--12},
 year = {2018},
 language = {English},
 doi = {10.1007/s40304-018-0127-z},
 keywords = {35Q68,68T05,35A15,35P30},
 zbMATH = {6860564},
 Zbl = {1392.35306}
}

@article{EWoj22,
 author = {E, Weinan and Wojtowytsch, Stephan},
 title = {Representation formulas and pointwise properties for {Barron} functions},
 fjournal = {Calculus of Variations and Partial Differential Equations},
 journal = {Calc. Var. Partial Differ. Equ.},
 issn = {0944-2669},
 volume = {61},
 number = {2},
 pages = {37},
 note = {Id/No 46},
 year = {2022},
 language = {English},
 doi = {10.1007/s00526-021-02156-6},
 keywords = {41A30,26B35,26B40,46E15,68T07},
 zbMATH = {7488383},
 Zbl = {1482.41013}
}

@article{CLLZ24,
 author = {Chen, Ziang and Lu, Jianfeng and Lu, Yulong and Zhang, Xiangxiong},
 title = {On the convergence of {Sobolev} gradient flow for the {Gross--Pitaevskii} eigenvalue problem},
 fjournal = {SIAM Journal on Numerical Analysis},
 journal = {SIAM J. Numer. Anal.},
 issn = {0036-1429},
 volume = {62},
 number = {2},
 pages = {667--691},
 year = {2024},
 language = {English},
 doi = {10.1137/23M1552553},
 keywords = {65N25,65N12,65N06,35A15,35B38,49R05,49M41,35R01,35Q55},
 zbMATH = {7821069},
 Zbl = {1535.65270}
}

@book{Brezis11,
 author = {Brezis, Haim},
 title = {Functional analysis, {Sobolev} spaces and partial differential equations},
 fseries = {Universitext},
 series = {Universitext},
 issn = {0172-5939},
 isbn = {978-0-387-70913-0},
 year = {2011},
 publisher = {New York, NY: Springer},
 language = {English},
 keywords = {46-01,47-01,35-01,46E35,46N20,47F05},
 zbMATH = {5633610},
 Zbl = {1220.46002}
}

@book{Folland99,
 author = {Folland, Gerald B.},
 title = {Real analysis. {Modern} techniques and their applications.},
 edition = {2nd ed.},
 fseries = {Pure and Applied Mathematics. A Wiley-Interscience Series of Texts, Monographs and Tracts},
 series = {Pure Appl. Math., Wiley-Intersci. Ser. Texts Monogr. Tracts},
 isbn = {0-471-31716-0},
 year = {1999},
 publisher = {New York, NY: Wiley},
 language = {English},
 keywords = {28-01,28A25,28A78,46-01,42-01,60-01},
 zbMATH = {1324223},
 Zbl = {0924.28001}
}

@article{LVR24,
 author = {Li, Xingjian and Verma, Deepanshu and Ruthotto, Lars},
 title = {A neural network approach for stochastic optimal control},
 fjournal = {SIAM Journal on Scientific Computing},
 journal = {SIAM J. Sci. Comput.},
 issn = {1064-8275},
 volume = {46},
 number = {5},
 pages = {c535--c556},
 year = {2024},
 language = {English},
 doi = {10.1137/23M155832X},
 keywords = {35F21,35R60,49M99,68T07},
 zbMATH = {7906815},
 Zbl = {1547.35182}
}

@article{EJA22,
 author = {E, Weinan and Han, Jiequn and Jentzen, Arnulf},
 title = {Algorithms for solving high dimensional {PDEs}: from nonlinear {Monte} {Carlo} to machine learning},
 fjournal = {Nonlinearity},
 journal = {Nonlinearity},
 issn = {0951-7715},
 volume = {35},
 number = {1},
 pages = {278--310},
 year = {2022},
 language = {English},
 doi = {10.1088/1361-6544/ac337f},
 keywords = {60H30,60H35,65C05,65C30,65M75},
 zbMATH = {7455604},
 Zbl = {1490.60202}
}

@article{ZhouHL21,
 author = {Zhou, Mo and Han, Jiequn and Lu, Jianfeng},
 title = {Actor-critic method for high dimensional static {Hamilton--Jacobi--Bellman} partial differential equations based on neural networks},
 fjournal = {SIAM Journal on Scientific Computing},
 journal = {SIAM J. Sci. Comput.},
 issn = {1064-8275},
 volume = {43},
 number = {6},
 pages = {a4043--a4066},
 year = {2021},
 language = {English},
 doi = {10.1137/21M1402303},
 keywords = {65M75,65C30,65K05,49L12,93E20,68T07,35J60},
 zbMATH = {7456295},
 Zbl = {1481.65203}
}

@article{NakaTGK21,
 author = {Nakamura-Zimmerer, Tenavi and Gong, Qi and Kang, Wei},
 title = {Adaptive deep learning for high-dimensional {Hamilton}-{Jacobi}-{Bellman} equations},
 fjournal = {SIAM Journal on Scientific Computing},
 journal = {SIAM J. Sci. Comput.},
 issn = {1064-8275},
 volume = {43},
 number = {2},
 pages = {a1221--a1247},
 year = {2021},
 language = {English},
 doi = {10.1137/19M1288802},
 keywords = {49N35,49K15,49L20,68T05,90C30,93C15,93C20},
 zbMATH = {7364322},
 Zbl = {1467.49028}
}

@misc{PhilippL21,
      title={Deep neural network approximation for high-dimensional parabolic Hamilton-Jacobi-Bellman equations}, 
      author={Philipp Grohs and Lukas Herrmann},
      year={2021},
      eprint={2103.05744},
      archivePrefix={arXiv},
      primaryClass={math.NA},
      url={https://arxiv.org/abs/2103.05744}, 
}

@article{HZE19,
 author = {Han, Jiequn and Zhang, Linfeng and E, Weinan},
 title = {Solving many-electron {Schr{\"o}dinger} equation using deep neural networks},
 fjournal = {Journal of Computational Physics},
 journal = {J. Comput. Phys.},
 issn = {0021-9991},
 volume = {399},
 pages = {8},
 note = {Id/No 108929},
 year = {2019},
 language = {English},
 doi = {10.1016/j.jcp.2019.108929},
 keywords = {81-08,81V45,65C05,68T07,81Q05,81V10,81V70},
 zbMATH = {7302244},
 Zbl = {1457.81010}
}

@article{HS20,
  title={Deep-neural-network solution of the electronic {Schr{\"o}dinger} equation},
  author={Hermann, Jan and Sch{\"a}tzle, Zeno and No{\'e}, Frank},
  journal={Nature Chemistry},
  volume={12},
  number={10},
  pages={891--897},
  year={2020},
  publisher={Nature Publishing Group UK London}
}

@article{PSMF20,
  title = {Ab initio solution of the many-electron Schr\"odinger equation with deep neural networks},
  author = {Pfau, David and Spencer, James S. and Matthews, Alexander G. D. G. and Foulkes, W. M. C.},
  journal = {Phys. Rev. Res.},
  volume = {2},
  issue = {3},
  pages = {033429},
  numpages = {20},
  year = {2020},
  month = {Sep},
  publisher = {American Physical Society},
  doi = {10.1103/PhysRevResearch.2.033429},
  url = {https://link.aps.org/doi/10.1103/PhysRevResearch.2.033429}
}

@InProceedings{EWreg22,
  title = 	 {Some observations on high-dimensional partial differential equations with Barron data},
  author =       {E, Weinan and Wojtowytsch, Stephan},
  booktitle = 	 {Proceedings of the 2nd Mathematical and Scientific Machine Learning Conference},
  pages = 	 {253--269},
  year = 	 {2022},
  editor = 	 {Bruna, Joan and Hesthaven, Jan and Zdeborova, Lenka},
  volume = 	 {145},
  series = 	 {Proceedings of Machine Learning Research},
  month = 	 {16--19 Aug},
  publisher =    {PMLR},
  pdf = 	 {https://proceedings.mlr.press/v145/e22a/e22a.pdf},
  url = 	 {https://proceedings.mlr.press/v145/e22a.html},
  abstract = 	 { We use explicit representation formulas to show that solutions to certain partial differential equa- tions lie in Barron spaces or multilayer spaces if the PDE data lie in such function spaces. Conse- quently, these solutions can be represented efficiently using artificial neural networks, even in high dimension. Conversely, we present examples in which the solution fails to lie in the function space associated to a neural network under consideration.}
}

@book{Lieb13,
 author = {Lieberman, Gary M.},
 title = {Oblique derivative problems for elliptic equations},
 isbn = {978-981-4452-32-8; 978-981-4452-34-2},
 year = {2013},
 publisher = {Hackensack, NJ: World Scientific},
 language = {English},
 doi = {10.1142/8679},
 keywords = {35-02,35J25,35B45,35J60,35D30,35D35,35D40,35B65},
 zbMATH = {6160372},
 Zbl = {1273.35006}
}

@article{SIEGELXu21,
title = {High-order approximation rates for shallow neural networks with cosine and ReLUk activation functions},
journal = {Applied and Computational Harmonic Analysis},
volume = {58},
pages = {1-26},
year = {2022},
issn = {1063-5203},
doi = {https://doi.org/10.1016/j.acha.2021.12.005},
url = {https://www.sciencedirect.com/science/article/pii/S1063520321001056},
author = {Jonathan W. Siegel and Jinchao Xu},
keywords = {Neural networks, Approximation rates, Approximation lower bounds, Finite element methods},
abstract = {We study the approximation properties of shallow neural networks with an activation function which is a power of the rectified linear unit. Specifically, we consider the dependence of the approximation rate on the dimension and the smoothness in the spectral Barron space of the underlying function f to be approximated. We show that as the smoothness index s of f increases, shallow neural networks with ReLUk activation function obtain an improved approximation rate up to a best possible rate of O(n−(k+1)log⁡(n)) in L2, independent of the dimension d. The significance of this result is that the activation function ReLUk is fixed independent of the dimension, while for classical methods the degree of polynomial approximation or the smoothness of the wavelets used would have to increase in order to take advantage of the dimension dependent smoothness of f. In addition, we derive improved approximation rates for shallow neural networks with cosine activation function on the spectral Barron space. Finally, we prove lower bounds showing that the approximation rates attained are optimal under the given assumptions.}
}

@article{SX23,
 author = {Siegel, Jonathan W. and Xu, Jinchao},
 title = {Characterization of the variation spaces corresponding to shallow neural networks},
 fjournal = {Constructive Approximation},
 journal = {Constr. Approx.},
 issn = {0176-4276},
 volume = {57},
 number = {3},
 pages = {1109--1132},
 year = {2023},
 language = {English},
 doi = {10.1007/s00365-023-09626-4},
 keywords = {68T07,41A30,46E30},
 zbMATH = {7698579},
 Zbl = {1538.68054}
}

@article{SW24,
 author = {Siegel, Jonathan W. and Xu, Jinchao},
 title = {Sharp bounds on the approximation rates, metric entropy, and {{\(n\)}}-widths of shallow neural networks},
 fjournal = {Foundations of Computational Mathematics},
 journal = {Found. Comput. Math.},
 issn = {1615-3375},
 volume = {24},
 number = {2},
 pages = {481--537},
 year = {2024},
 language = {English},
 doi = {10.1007/s10208-022-09595-3},
 keywords = {62M45,41A46},
 zbMATH = {7835553}
}

@book{GHJv23,
 author = {Grohs, Philipp and Hornung, Fabian and Jentzen, Arnulf and von Wurstemberger, Philippe},
 title = {A proof that artificial neural networks overcome the curse of dimensionality in the numerical approximation of {Black}-{Scholes} partial differential equations},
 fseries = {Memoirs of the American Mathematical Society},
 series = {Mem. Am. Math. Soc.},
 issn = {0065-9266},
 volume = {1410},
 isbn = {978-1-4704-5632-0; 978-1-4704-7448-5},
 year = {2023},
 publisher = {Providence, RI: American Mathematical Society (AMS)},
 language = {English},
 doi = {10.1090/memo/1410},
 keywords = {65-02,65D99,35Kxx,60Hxx},
 zbMATH = {7679303}
}

@inproceedings{MLR21,
 author = {Marwah, Tanya and Lipton, Zachary and Risteski, Andrej},
 booktitle = {Advances in Neural Information Processing Systems},
 editor = {M. Ranzato and A. Beygelzimer and Y. Dauphin and P.S. Liang and J. Wortman Vaughan},
 pages = {15044--15055},
 publisher = {Curran Associates, Inc.},
 title = {Parametric Complexity Bounds for Approximating PDEs with Neural Networks},
 url = {https://proceedings.neurips.cc/paper_files/paper/2021/file/7edccc661418aeb5761dbcdc06ad490c-Paper.pdf},
 volume = {34},
 year = {2021}
}

@inbook{M18,
author = {Maziar Raissi},
title = {Forward–Backward Stochastic Neural Networks: Deep Learning of High-Dimensional Partial Differential Equations},
booktitle = {Peter Carr Gedenkschrift},
chapter = {Chapter 18},
pages = {637-655},
doi = {10.1142/9789811280306_0018},
year = {2023},
publisher = {World Scientific},
URL = {https://www.worldscientific.com/doi/abs/10.1142/9789811280306_0018},
eprint = {https://www.worldscientific.com/doi/pdf/10.1142/9789811280306_0018},
    abstract = { Classical numerical methods for solving partial differential equations suffer from the curse of dimensionality mainly due to their reliance on meticulously generated spatio-temporal grids. Inspired by modern deep learning-based techniques for solving forward and inverse problems associated with partial differential equations, we circumvent the tyranny of numerical discretization by devising an algorithm that is scalable to high dimensions. In particular, we approximate the unknown solution by a deep neural network which essentially enables us to benefit from the merits of automatic differentiation. To train the aforementioned neural network, we leverage the well-known connection between high-dimensional partial differential equations and forward–backward stochastic differential equations. In fact, independent realizations of a standard Brownian motion will act as training data. We test the effectiveness of our approach for a couple of benchmark problems spanning a number of scientific domains, including Black–Scholes–Barenblatt and Hamilton–Jacobi–Bellman equations, both in 100 dimensions. }
}

@article{AYT22,
title = {A new efficient approximation scheme for solving high-dimensional semilinear PDEs: Control variate method for Deep BSDE solver},
journal = {Journal of Computational Physics},
volume = {454},
pages = {110956},
year = {2022},
issn = {0021-9991},
doi = {https://doi.org/10.1016/j.jcp.2022.110956},
url = {https://www.sciencedirect.com/science/article/pii/S0021999122000183},
author = {Akihiko Takahashi and Yoshifumi Tsuchida and Toshihiro Yamada},
keywords = {Deep learning, Semilinear partial differential equations, Backward stochastic differential equations, Deep BSDE solver, Asymptotic expansion, Control variate method},
abstract = {This paper introduces a new approximation scheme for solving high-dimensional semilinear partial differential equations (PDEs) and backward stochastic differential equations (BSDEs). First, we decompose a target semilinear PDE (BSDE) into two parts, linear PDE part and nonlinear PDE part. Then, we employ a Deep BSDE solver with a new control variate method to solve those PDEs, where approximations based on an asymptotic expansion technique are effectively applied to the linear part and also used as control variates for the nonlinear part. Moreover, our theoretical result indicates that errors of the proposed method become much smaller than those of the original Deep BSDE solver. Finally, we show numerical experiments to demonstrate the validity of our method, which is consistent with the theoretical result in this paper.}
}

@article{LL24,
 author = {Kapllani, Lorenc and Teng, Long},
 title = {Deep learning algorithms for solving high-dimensional nonlinear backward stochastic differential equations},
 fjournal = {Discrete and Continuous Dynamical Systems. Series B},
 journal = {Discrete Contin. Dyn. Syst., Ser. B},
 issn = {1531-3492},
 volume = {29},
 number = {4},
 pages = {1695--1729},
 year = {2024},
 language = {English},
 doi = {10.3934/dcdsb.2023151},
 keywords = {60H35,65C30,68T07,65C20},
 zbMATH = {7807487},
 Zbl = {1532.60153}
}

@article{KT25,
 author = {Kapllani, Lorenc and Teng, Long},
 title = {A backward differential deep learning-based algorithm for solving high-dimensional nonlinear backward stochastic differential equations},
 fjournal = {IMA Journal of Numerical Analysis},
 journal = {IMA J. Numer. Anal.},
 issn = {0272-4979},
 volume = {46},
 number = {2},
 pages = {1134--1182},
 year = {2026},
 language = {English},
 doi = {10.1093/imanum/draf022},
 keywords = {60H35,60H10,60H07,65C30,68T07,91G20,91G80},
 zbMATH = {8199926}
}

@book{CH89,
 author = {Courant, R. and Hilbert, D.},
 title = {Methods of mathematical physics. {Volume} {I}. {Transl}. and rev. from the {German} {Original}.},
 edition = {Reprint of the 1st {Engl}. ed. 1953},
 isbn = {0-471-50447-5},
 year = {1989},
 publisher = {New York etc.: John Wiley \& Sons},
 language = {English},
 keywords = {00A69,15-01,33-01,45-01,49-01,34-01,35-01},
 zbMATH = {48198},
 Zbl = {0729.00007}
}

@article{RPK19,
 author = {Raissi, M. and Perdikaris, P. and Karniadakis, G. E.},
 title = {Physics-informed neural networks: a deep learning framework for solving forward and inverse problems involving nonlinear partial differential equations},
 fjournal = {Journal of Computational Physics},
 journal = {J. Comput. Phys.},
 issn = {0021-9991},
 volume = {378},
 pages = {686--707},
 year = {2019},
 language = {English},
 doi = {10.1016/j.jcp.2018.10.045},
 keywords = {68T05,35K57,35Q55,35R30},
 url = {www.osti.gov/biblio/1595805},
 zbMATH = {7083728},
 Zbl = {1415.68175}
}

@article{SDK20,
 author = {Shin, Yeonjong and Darbon, J{\'e}r{\^o}me and Karniadakis, George Em},
 title = {On the convergence of physics informed neural networks for linear second-order elliptic and parabolic type {PDEs}},
 fjournal = {Communications in Computational Physics},
 journal = {Commun. Comput. Phys.},
 issn = {1815-2406},
 volume = {28},
 number = {5},
 pages = {2042--2074},
 year = {2020},
 language = {English},
 doi = {10.4208/cicp.OA-2020-0193},
 keywords = {65N99,65M99,65M12,65N12,35J25,35K20,68T07},
 zbMATH = {7419159},
 Zbl = {1473.65349}
}

@article{DTM24,
 author = {De Ryck, Tim and Mishra, Siddhartha},
 title = {Numerical analysis of physics-informed neural networks and related models in physics-informed machine learning},
 fjournal = {Acta Numerica},
 journal = {Acta Numerica},
 issn = {0962-4929},
 volume = {33},
 pages = {633--713},
 year = {2024},
 language = {English},
 doi = {10.1017/S0962492923000089},
 keywords = {68T07,68T05,65M15},
 zbMATH = {7906744},
 Zbl = {1546.68023}
}

@article{HenP20,
 author = {Henning, Patrick and Peterseim, Daniel},
 title = {Sobolev gradient flow for the {Gross--Pitaevskii} eigenvalue problem: global convergence and computational efficiency},
 fjournal = {SIAM Journal on Numerical Analysis},
 journal = {SIAM J. Numer. Anal.},
 issn = {0036-1429},
 volume = {58},
 number = {3},
 pages = {1744--1772},
 year = {2020},
 language = {English},
 doi = {10.1137/18M1230463},
 keywords = {35Q55,65N12,65N25,65N30,81Q05,35P30,82B10},
 zbMATH = {7210680},
 Zbl = {1512.35538}
}

@article{Zhang22,
 author = {Zhang, Ziyun},
 title = {Exponential convergence of {Sobolev} gradient descent for a class of nonlinear eigenproblems},
 fjournal = {Communications in Mathematical Sciences},
 journal = {Commun. Math. Sci.},
 issn = {1539-6746},
 volume = {20},
 number = {2},
 pages = {377--403},
 year = {2022},
 language = {English},
 doi = {10.4310/CMS.2022.v20.n2.a4},
 keywords = {35P30,35A35,35J25,35J61,47J10,65K10,65N25,81Q05},
 url = {resolver.caltech.edu/CaltechAUTHORS:20220309-676806000},
 zbMATH = {7474603},
 Zbl = {1483.35143}
}

@inproceedings{HMRM20,
 author = {Sitzmann, Vincent and Martel, Julien and Bergman, Alexander and Lindell, David and Wetzstein, Gordon},
 booktitle = {Advances in Neural Information Processing Systems},
 editor = {H. Larochelle and M. Ranzato and R. Hadsell and M.F. Balcan and H. Lin},
 pages = {7462--7473},
 publisher = {Curran Associates, Inc.},
 title = {Implicit Neural Representations with Periodic Activation Functions},
 url = {https://proceedings.neurips.cc/paper_files/paper/2020/file/53c04118df112c13a8c34b38343b9c10-Paper.pdf},
 volume = {33},
 year = {2020}
}

@InProceedings{Benb22,
    author    = {Benbarka, Nuri and H\"ofer, Timon and Riaz, Hamd ul-Moqeet and Zell, Andreas},
    title     = {Seeing Implicit Neural Representations As Fourier Series},
    booktitle = {Proceedings of the IEEE/CVF Winter Conference on Applications of Computer Vision (WACV)},
    month     = {January},
    year      = {2022},
    pages     = {2041-2050}
}

@inproceedings{MTS21, author = {Meronen, Lassi and Trapp, Martin and Solin, Arno}, title = {Periodic activation functions induce stationarity}, year = {2021}, isbn = {9781713845393}, publisher = {Curran Associates Inc.}, address = {Red Hook, NY, USA}, abstract = {Neural network models are known to reinforce hidden data biases, making them unreliable and difficult to interpret. We seek to build models that 'know what they do not know' by introducing inductive biases in the function space. We show that periodic activation functions in Bayesian neural networks establish a connection between the prior on the network weights and translation-invariant, stationary Gaussian process priors. Furthermore, we show that this link goes beyond sinusoidal (Fourier) activations by also covering triangular wave and periodic ReLU activation functions. In a series of experiments, we show that periodic activation functions obtain comparable performance for in-domain data and capture sensitivity to perturbed inputs in deep neural networks for out-of-domain detection.}, booktitle = {Proceedings of the 35th International Conference on Neural Information Processing Systems}, articleno = {129}, numpages = {13}, series = {NIPS '21} }

@article{RRG25,
   title={SineKAN: Kolmogorov-Arnold Networks using sinusoidal activation functions},
   volume={7},
   ISSN={2624-8212},
   url={http://dx.doi.org/10.3389/frai.2024.1462952},
   DOI={10.3389/frai.2024.1462952},
   journal={Frontiers in Artificial Intelligence},
   publisher={Frontiers Media SA},
   author={Reinhardt, Eric and Ramakrishnan, Dinesh and Gleyzer, Sergei},
   year={2025},
   month=jan }

@book{AG11,
author = {Ascher, Uri M. and Greif, Chen},
title = {A First Course in Numerical Methods},
publisher = {Society for Industrial and Applied Mathematics},
year = {2011},
doi = {10.1137/9780898719987},
address = {Philadelphia, PA},
edition   = {},
URL = {https://epubs.siam.org/doi/abs/10.1137/9780898719987},
eprint = {https://epubs.siam.org/doi/pdf/10.1137/9780898719987}
}

@article{P81,
     author = {Pisier, G.},
     title = {Remarques sur un r\'esultat non publi\'e de {B.} {Maurey}},
     journal = {S\'eminaire d'analyse fonctionnelle},
     note = {talk:5},
     pages = {1--12},
     publisher = {Ecole Polytechnique, Centre de Math\'ematiques},
     year = {1980-1981},
     mrnumber = {659306},
     zbl = {0491.46017},
     language = {fr},
     url = {https://www.numdam.org/item/SAF_1980-1981____A5_0/}
}

@article{JSC26,
 author = {Jiang, Deqing and Sirignano, Justin and Cohen, Samuel N.},
 title = {Global convergence of deep {Galerkin} and {PINN} methods for solving partial differential equations},
 fjournal = {SIAM Journal on Financial Mathematics},
 journal = {SIAM J. Financ. Math.},
 issn = {1945-497X},
 volume = {17},
 number = {2},
 pages = {620--645},
 year = {2026},
 language = {English},
 doi = {10.1137/24M1701502},
 keywords = {65N75},
 zbMATH = {8218742}
}

@misc{CLTakase26,
      title={Functional analysis and partial differential equations in spectral Barron spaces}, 
      author={Mourad Choulli and Shuai Lu and Hiroshi Takase},
      year={2026},
      eprint={2507.06778},
      archivePrefix={arXiv},
      primaryClass={math.FA},
      url={https://arxiv.org/abs/2507.06778}, 
}
\vspace{1em}
\noindent 
\small{\textsc{(ZC) Department of Mathematics, Massachusetts Institute of Technology, 77 Massachusetts Avenue, Cambridge, MA 02139, USA}}

\noindent 
\small{\textit{Email address}: \texttt{ziang@mit.edu}}

\vspace{1em}

\noindent 
\small{\textsc{(LH) Department of Mathematics \& Statistics, Boston University, 665 Commonwealth Avenue, Boston, MA 02215, USA}}

\noindent 
\small{\textit{Email address}: \texttt{lqhuang@bu.edu}}
\end{document}